\numberwithin{equation}{section}
\theoremstyle{plain}
\newtheorem{Thm}{Theorem}[section]
\newtheorem{Lem}[Thm]{Lemma}
\newtheorem*{Thm*}{Theorem}
\theoremstyle{definition}
\newtheorem{Rem}[Thm]{Remark}
\newcommand{\C}{\mathbb{C}}
\newcommand{\R}{\mathbb{R}}
\let\nhatoksa=\theenumi
\let\nhatoksb=\labelenumi
\let\nhatoksc=\theenumii
\let\nhatoksd=\labelenumii
\newlength{\nhalengtha}
\newlength{\nhalengthb}
\newlength{\nhalengthc}
\newcommand{\resetenum}{
\let\theenumi=\nhatoksa
\let\labelenumi=\nhatoksb
\let\theenumii=\nhatoksc
\let\labelenumii=\nhatoksd
\setlength{\leftmargini}{\nhalengtha}
\setlength{\leftmarginii}{\nhalengthb}
\setlength{\labelwidth}{\nhalengthc}
}
\def\ov{\overline}
\def\pa {\partial}
\def\al {\alpha}
\def\bt {\beta}
\def\de {\delta}
\def\Ga {\Gamma}
\def\ga {\gamma}
\def\lm {\lambda}
\def\om {\omega}
\def\vr {\varepsilon}
\def\va {\varphi}
\def\fa{\mathbf{A}}
\newcommand{\bkt}[1]{\left(#1\right)}
\newcommand{\hbt}[1]{\left\{#1\right\}}
\newcommand{\norm}[1]{\left\|#1\right\|}
\newcommand{\inp}[2]{\left\langle#1,#2\right\rangle}
\newcommand{\jdz}[1]{\left|#1\right|}
\title{On semi-classical limits of ground states of a
nonlinear Maxwell-Dirac system}
\author{Yanheng Ding \,
and \, Tian Xu \and {\small Institute
of Mathematics, AMSS, Chinese Academy of Sciences,} \\
{\small  100190 Beijing, China} }
\date{}
\begin{document}
\maketitle

\begin{abstract}
We study the semi-classical ground states of the nonlinear
Maxwell-Dirac system:
\[
\left\{
\begin{aligned}
&\al\cdot\big(i\hbar\nabla+ q(x)\fa(x)\big) w-a\bt w -\omega w - q(x)\phi(x) w
= P(x)g(\jdz{w}) w\\
&-\Delta\phi=q(x)\jdz{w}^2\\
&-\Delta{A_k}=q(x)(\alpha_k  w)\cdot \bar w\ \ \ \ k=1,2,3
\end{aligned}\right.
\]
for $x\in\R^3$, where $\fa$ is the magnetic field, $\phi$ is the
electron field and $q$ describes the changing pointwise charge distribution.
We develop a variational method to establish the
existence of least energy solutions for $\hbar$ small. We also
describe the concentration behavior of the solutions as $\hbar\to 0$.

\vspace{.5cm}

\noindent{\bf Mathematics Subject Classifications (2000):} \,
35Q40, 49J35.

\vspace{.5cm}

\noindent {\bf Keywords:} \, \, nonlinear Maxwell-Dirac system,
semiclassical states, concentration.

\end{abstract}

\section{Introduction and main result}

The Maxwell-Dirac system, which has been widely considered in
literature (see \cite{Abenda}, \cite{Sere2}, \cite{Glassey},
\cite{Lisi}, \cite{Psarelli}, \cite{Schwabl}, \cite{Thaller} etc. and
references therein), is fundamental in the relativistic description
of spin $1/2$ particles. It represents the time-evolution of fast
(relativistic) electrons and positrons within external and
self-consistent generated electromagnetic field. The system
can be written as follows:
\begin{equation}\label{M-D0}
\left\{
\begin{aligned}
&i\hbar\partial_t \psi+\alpha\cdot\bkt{ic\hbar\nabla+q\mathbf{A}}\psi-q\phi\psi-mc^2\bt \psi=0\\
&\partial_t\phi+c\sum_{k=1}^3\partial_k A_k=0\,,
\ \ \ \ \partial_t^2\phi-\Delta\phi=\frac{4\pi}{c}q\jdz{\psi}^2\\
&\partial_t^2A_k-\Delta A_k=\frac{4\pi}{c}q(\alpha_k \psi)\bar \psi\ \ \ \ k=1,2,3
\end{aligned}\ \ \ \ \mathrm{in}\ \R\times\R^3 \right.
\end{equation}
where $\psi(t,x)\in\mathbb{C}^4$, $c$ is the speed of light, $q$ is
the charge of the particle, $m>0$ is the mass of the electron,
$\hbar$ is the Planck's constant, and $u\bar{v}$ denotes the inner
product of $u, v\in\mathbb{C}^4$. Furthermore, $\alpha_1$,
$\alpha_2$, $\alpha_3$ and $\beta$ are $4\times4$ complex matrices:
\[
\beta=\left(
\begin{array}{cc}
I&0\\
0&-I
\end{array}\right),\ \ \ \alpha_k=\left(
\begin{array}{cc}
0&\sigma_k\\
\sigma_k&0
\end{array}\right),\ \ k=1,2,3,
\]
with
\[
\sigma_1=\left(
\begin{array}{cc}
0&1\\
1&0
\end{array}\right),\ \ \ \sigma_2=\left(
\begin{array}{cc}
0&-i\\
i&0
\end{array}\right),\ \ \ \sigma_3=\left(
\begin{array}{cc}
1&0\\
0&-1
\end{array}\right),
\]
$\mathbf{A}=(A_1,A_2,A_3):\mathbb{R}\times\mathbb{R}^3\to\mathbb{R}^3$,
$\phi:\mathbb{R}\times\mathbb{R}^3\to\mathbb{R}$, and we have used
$\al=(\al_1, \al_2, \al_3)$,
$\al\cdot\nabla=\sum_{k=1}^3\al_k\partial_k$, and
$\al\cdot\mathbf{V}=\sum_{k=1}^3\al_k V_k$ for any vector
$\mathbf{V}\in\mathbb{C}^3$.

The above system has been studied for a long time and results are
available concerning the Cauchy problem (see \cite{Chadam1},
\cite{Chadam2}, \cite{Flato},
 \cite{Georgiev}, \cite{Gross}, \cite{Sparber} etc. and
references therein). The first result on the local existence and
uniqueness of solutions of \eqref{M-D0} was obtained by L. Gross
in \cite{Gross}. For later d\'eveloppements, we mention, e.g.,
that Sparber and Markowich \cite{Sparber} studied the
existence and asymptotic description of
the solution of Cauchy problem for Maxwell-Dirac system as
$\hbar\to0$, and obtained the asymptotic approximation
up to order $O(\sqrt{\hbar})$.

In this paper, we are interested in finding stationary waves of \eqref{M-D0}
which have the form
\[\left\{
\aligned
&\psi(t,x)=w(x)e^{i\theta t/\hbar},\ \ \theta\in\mathbb{R}, \ \ w:\R^3\to \C^4,\\
&\fa=\fa(x), \quad  \phi=\phi(x) \quad \text{in }  \R^3.
\endaligned \right.
\]
For notation convenience, one shall denote $A_0=\phi$. If $(\psi, \fa, A_0)$ is
a stationary solution of \eqref{M-D0}, then $(w, \fa, A_0)$ is a solution of
\begin{equation}\label{D1}
\left\{
\begin{aligned}
&\al\cdot\bkt{i\hbar\nabla+Q\fa} w-a\beta w-\omega w-QA_0 w=0,\\
&-\Delta A_k=4\pi Q(\alpha_k w) \bar{w},\ \ \ \ k=0,1,2,3,
\end{aligned}\right.
\end{equation}
where $a=mc>0$, $\omega\in\mathbb{R}$, $Q=q/c$ and $\alpha_0:=I$.

The existence of stationary solution of the system has been an open
problem for a long time, see \cite{Grandy}. Using variational
methods, Esteban, Georgiev and S\'{e}r\'{e} \cite{Sere1} proved the
existence of regular solutions of the form $\psi(t,x)=w(x)e^{i\om t}$ with $\om\in(0,a)$,
leaving open the question of existence of solutions for
$\omega\leq0$. On the other hand, in \cite{Lisi}, Garrett Lisi gave
numerical evidence of the existence of bounded states for
$\omega\in(-a,a)$ by using an axially symmetric ansatz. After that,
Abenda in \cite{Abenda} obtained the existence result of solitary
wave solutions for $\omega\in(-a,a)$.

We emphasize that the works mentioned above mainly concerned with
the autonomous system with null self-coupling. Besides, limited work has
been done in the semi-classical approximation.
For small $\hbar$, the solitary waves are
referred to as semi-classical states. To describe the transition
from quantum to classical mechanics, the existence of solutions
$w_\hbar$, $\hbar$ small, possesses an important physical interest.
The idea to consider a nonlinear self-coupling,
in Quantum electrodynamics,
gives the description of models of self-interacting spinor fields (
see \cite{FLR}, \cite{FFK}, \cite{Iva} etc. and references therein).
Due to the special physical importance, in the present paper,
we are devoted to the existence and
concentration phenomenon of stationary
semi-classical solutions to the system with
\begin{itemize}
\item   the varying pointwise charge distribution $Q(x)$ including the
constant $q$ as a special one;
\item   general subcritical self-coupling nonlinearity.
\end{itemize}
More precisely, we consider the system, writing $\vr =\hbar$,
\begin{equation}\label{D2}
\left\{
\begin{aligned}
&\al\cdot\bkt{i\vr \nabla+Q(x)\fa} w-a\beta w-\omega w-Q(x)A_0 w = P(x)g(\jdz{w}) w,\\
&-\Delta A_k=4\pi Q(x)(\al_k w) \bar{w}\ \ \ \ k=0,1,2,3.
\end{aligned}\right.
\end{equation}
Writing $G(\jdz{w}):=\int^{\jdz{w}}_0g(s)sds$, we make the following
hypotheses:
\begin{itemize}
\item[$(g_1)$] {\it $g(0)=0$, $g\in C^1(0,\infty)$, $g'(s)>0$ for $s>0$, and
there exist $p\in (2, 3)$, $c_1>0$ such that $g(s)\leq
c_1(1+s^{p-2})$ for $s\geq 0$}\ ;
\item[$(g_2)$]{\it  there exist $\sigma>2$, $\theta>2$ and $c_0>0$ such that
$c_0s^\sigma\leq G(s)\leq \frac{1}{\theta}g(s)s^2$ for all $s>0$}\ .
\end{itemize}
A typical example is the power function $g(s)=s^{\sigma-2}$. For
describing the charge distribution and external fields
we always assume that $Q(x)$ and $P(x)$ verify, respectively

\begin{itemize}
\item[$(Q_0)$] $Q\in C^{0,1}(\mathbb{R}^3)\cap L^\infty(\mathbb{R}^3)$
 with $Q(x)\geq0$ a.e.
on $\mathbb{R}^3$;
\item[$(P_0)$] $P\in C^{0,1}(\mathbb{R}^3)\cap L^\infty(\mathbb{R}^3)$ 
 with $\inf P>0$ and $\limsup\limits_{\jdz{x}\to\infty}P(x)<\max P(x)$.
\end{itemize}
For showing the concentration phenomena,  we set $m:=\max_{x\in\mathbb{R}^3}P(x)$ and
$$
\mathscr{P}:=\{x\in\mathbb{R}^3:P(x)=m\}.
$$

Our result reads as

\begin{Thm}\label{main theorem}
Assume that $\omega\in(-a,a)$, $(g_1)$-$(g_2)$, $(Q_0)$ and $(P_0)$ are
satisfied. Then for all $\vr >0$ small,
\begin{itemize}
\item[$(i)$] The system (\ref{D2}) has at least one least energy solution $w_\vr \in W^{1,q}$
 for all $q\geq2$. In addition, if $P,\ Q\in C^{1,1}(\mathbb{R}^3)$ the solutions will be in $C^1$ class.
\item[$(ii)$] The set of all least energy solutions is compact in $W^{1,q}$ for all $q\geq2$;
\item[$(iii)$] There is a maximum point $x_\vr $ of
$\jdz{w_\vr }$ with
$\lim_{\vr \to0}\mathrm{dist}(x_\vr ,\mathscr{P})=0$ such that $u_\vr (x):=w_\vr (\vr  x+x_\vr )$
converges uniformly to a least energy solution of (the  limit
equation)
\begin{equation}\label{the limit problem}
i\alpha\cdot\nabla u-a\beta u-\omega u=mg(\jdz{u})u.
\end{equation}
\item[$(iv)$]
$\jdz{w_\vr (x)}\leq C\exp{\bkt{-\frac{c}{\vr }\jdz{x-x_\vr }}}$
for some $C,c>0$.
\end{itemize}
\end{Thm}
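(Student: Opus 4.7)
The plan is to recast (\ref{D2}) as a non-local problem in $w$ alone. Solving the three Poisson equations $-\Delta A_k = 4\pi Q(\al_k w)\bar w$ gives $A_k[w](x)=\int_{\R^3} Q(y)(\al_k w\cdot\bar w)(y)/|x-y|\,dy$, and substitution produces a single Dirac-type equation with a Coulomb-type nonlocal term. The associated functional $\Phi_\vr$ lives on $H^{1/2}(\R^3,\C^4)$ and is \emph{strongly indefinite}, because the spectrum of $\al\cdot(i\vr\nabla)-a\bt$ on $L^2$ is $(-\infty,-a]\cup[a,\infty)$ with $\om\in(-a,a)$. The rescaling $u(y):=w(\vr y+x_0)$ removes $\vr$ from the principal part; a change of variables in the Newtonian potential shows that $A_k[w](\vr y+x_0)$ acquires an overall factor of $\vr^2$, so the Maxwell coupling is a perturbation of order $\vr^2$, and formally in the limit $\vr\to 0$ one recovers (\ref{the limit problem}) with $P\equiv m$.

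\textbf{Linking and existence.} For each small $\vr$ I would produce a least energy critical point of $\Phi_\vr$ at a minimax level $c_\vr$ via a generalized linking scheme for strongly indefinite functionals in the spirit of Kryszewski--Szulkin and Bartsch--Ding. Hypothesis $(g_1)$ ensures subcritical growth, keeping $\Phi_\vr$ of class $C^1$ and giving the required linking geometry on the splitting $H^{1/2}=H^+\op H^-$; the lower bound in $(g_2)$ together with the monotonicity $g'>0$ lets one exhibit a Nehari--Pankov-type manifold on which the infimum equals $c_\vr$ and is attained, and yields boundedness of Cerami sequences. The nonlocal Coulomb term is quartic in $w$ and, after rescaling, of order $\vr^2$, so it perturbs but does not destroy the geometry. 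The analogous argument for the autonomous limit problem provides a least energy level $E_m>0$ realized by a ground state of (\ref{the limit problem}).

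\textbf{Level comparison and concentration.} The heart of the semi-classical analysis is to show $c_\vr\to E_m$ as $\vr\to 0$. The upper bound is obtained by inserting a translation of a limit ground state, suitably truncated and centered at a point of $\mathscr{P}$, as a test pair in $\Phi_\vr$, using the $O(\vr^2)$ smallness of the Coulomb contribution and the $o(1)$ oscillations of $P(\vr\,\cdot)$, $Q(\vr\,\cdot)$. For the lower bound, take least energy solutions $w_\vr$, let $x_\vr$ be maxima of $\jdz{w_\vr}$, and set $u_\vr(y):=w_\vr(\vr y+x_\vr)$; the upper bound together with $(P_0)$ forces $\{x_\vr\}$ to stay in a compact set (otherwise $P(x_\vr)$ would drop below $m$, contradicting the upper estimate). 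A concentration-compactness argument adapted to the splitting $H^+\op H^-$ then produces, along a subsequence, a strong limit $u_0\not\equiv 0$ solving (\ref{the limit problem}) with $m$ replaced by $\lim P(x_\vr)$. Since $c_\vr\to E_m$, the limiting coefficient must be $m$ itself, so $\dist(x_\vr,\mathscr{P})\to 0$ and $u_\vr\to u_0$ uniformly. Running the argument on an arbitrary sequence of ground states yields the compactness asserted in $(ii)$.

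\textbf{Regularity, decay, and principal obstacle.} Assertion $(i)$ (beyond existence) is established by an $L^p$-bootstrap using the mapping properties of the Dirac resolvent together with Hardy--Littlewood--Sobolev for the Coulomb terms and the subcritical growth $(g_1)$, starting from $w_\vr\in H^{1/2}\cap L^p$ and iterating; $C^1$-regularity under $C^{1,1}$ coefficients follows from Schauder-type estimates. The exponential decay $(iv)$ rests on the identity $(-i\vr\al\cdot\nabla-a\bt)^2=-\vr^2\Delta+a^2$: combining it with Kato's inequality converts the Dirac equation into a scalar inequality $-\vr^2\Delta\jdz{w_\vr}+(a^2-\om^2)\jdz{w_\vr}\le R_\vr(x)\jdz{w_\vr}$ where $R_\vr(x)\to 0$ as $\jdz{x-x_\vr}\to\infty$ (since both the nonlinear and the Maxwell contributions vanish with $\jdz{w_\vr}$), allowing comparison with the barrier $C\exp(-c\jdz{x-x_\vr}/\vr)$. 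The main technical obstacle is the level comparison step, where strong indefiniteness, the quartic nonlocal Coulomb term, and the slowly varying coefficients $P(\vr\,\cdot)$, $Q(\vr\,\cdot)$ must be controlled simultaneously; this is what dictates the use of Nehari--Pankov reductions and of rescalings centered at the concentration points rather than a more direct linking argument.
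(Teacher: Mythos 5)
Your outline is broadly on the right track (reduction to a nonlocal Dirac equation, strongly indefinite linking geometry, a Nehari--Pankov fiber reduction on $E^+\oplus E^-$, level comparison $c_\vr\to\gamma_m$, concentration via blow-up at the maximum points, bootstrap regularity), but it glosses over two points that the paper singles out as the genuine technical obstacles, and in both places the approach you sketch would run into trouble.

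First, you treat the Maxwell term $\Gamma_\vr$ as a harmless $O(\vr^2)$ perturbation that ``perturbs but does not destroy the geometry.'' That is not enough to make the Nehari--Pankov reduction work: defining the map $h_\vr:E^+\to E^-$ by maximizing $\Phi_\vr(u+\cdot)$ along fibers requires the functional to be strictly concave on $E^-$, i.e.\ $\phi_u''(v)[z,z]<0$. With $\Gamma_\vr$ present this fails in general, because $\Gamma_\vr$ is quartic and \emph{not} convex, and the second-derivative estimate $|\Gamma_\vr''(u)[z,z]|\lesssim \vr^2\|u\|^2\|z\|^2$ degrades as $\|u\|\to\infty$, so concavity on $E^-$ is lost along the fiber before one reaches the maximum. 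The paper's device is to prove a uniform Cerami bound $\Lambda$ first (Lemma~\ref{PScseq eps estimate}), then multiply $\Gamma_\vr$ by a cut-off $\eta(\|u\|^2)$ supported in the ball of radius $\Lambda+1$ to get $\widetilde{\Phi}_\vr$, for which the fiber maps $h_\vr$ \emph{are} well defined for small $\vr$ (the cut-off caps $\|u\|$ so the $\vr^2$-smallness wins), and only afterwards show that the minimax critical points of $\widetilde{\Phi}_\vr$ live inside the ball where $\widetilde{\Phi}_\vr=\Phi_\vr$. Without some such truncation your reduction step has a gap.

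Second, for the exponential decay you propose to square the Dirac operator and apply Kato's inequality to get a scalar differential inequality for $|w_\vr|$. The paper explicitly rejects this, and for a concrete reason: after applying $D=i\al\cdot\nabla$ to the equation, the magnetic coupling produces the first-order term $2i\sum_k Q_\vr A^k_{\vr,u}\partial_k u$, which does not factor through $|u|$ and is not controlled by $\Delta|u|$. The paper instead uses the pointwise identity $\Delta|u|^2=\bar u\,\Delta u+u\,\Delta\bar u+2|\nabla u|^2$; the extra $2|\nabla u|^2$ term on the right is exactly what is needed (via Cauchy--Schwarz) to absorb the $A^k\partial_k u\cdot\bar u$ contributions, yielding $\Delta|u|^2\geq (a^2-\omega^2)|u|^2$ outside a large ball and hence the stated decay by comparison with a Yukawa-type barrier. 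Your Kato route as written would leave an uncontrolled gradient term.
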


It is standard that (\ref{D2}) is equivalent to, letting $u(x)=
w(\vr x)$,
\begin{equation}\label{D3}
\left\{
\begin{aligned}
&\al\cdot\bkt{i\nabla+Q_\vr \fa_{\vr }}u-a\beta u-\omega u-Q_\vr  A_{\vr ,0}u=P_\vr \, g(\jdz{u})u,\\
&-\Delta A_{\vr ,k}=\vr ^24\pi Q_\vr  J_k\ \ \ \ k=0,1,2,3,
\end{aligned}\right.
\end{equation}
where $Q_\vr (x)=Q(\vr  x)$, $P_\vr (x)=P(\vr x)$,
$\fa_{\vr}(x)=\fa(\vr x)$, $A_{\vr,k}(x)=A_{k}(\vr x)$, $k=0,1,2,3$,
and
$$
J_k=(\alpha_ku)\bar{u}\ \ \mathrm{for}\ k=0,1,2,3.
$$
In fact, using variational methods, we are going to focus on studying the
semiclassical solutions that are obtained as critical
points of an energy functional $\Phi_\vr $ associated to the
equivalent problem \eqref{D3}.

There have been a large number of works on existence
and concentration phenomenon of semi-classical
states of nonlinear Schr\"odinger-Poisson
systems arising in the \textit{non-relativistic} quantum mechanics,
see, for example, \cite{Ambrosetti1, Ambrosetti2, Azzollini}
and their references.
It is quite natural to ask if certain similar
results can be obtain for nonlinear Maxwell-Dirac
systems arising in the relativistic quantum mechanics.
Mathematically, the two systems possess different
variational structures, the Mountain-Pass and the Linking structures
respectively. The problems in Maxwell-Dirac systems are difficult because
they are strongly indefinite in the sense that both the negative and positive
parts of the spectrum of Dirac operator are unbounded
and consist of essential spectrums.
As far as the authors known there have been no results
on the existence and concentration phenomena of semiclassical solutions to
nonlinear Maxwell-Dirac systems.

Very recently, one of the authors, jointly with co-authors,
developed an argument to obtain some results on existence
and concentration of semi-classical solutions for
nonlinear Dirac equations but not for Maxwell-Dirac
system, see \cite{Ding2010, Ding2012, Ding Ruf}.
Compared with
the papers, difficulty arises in the Maxwell-Dirac system because of
the presence of nonlocal terms $A_{\vr ,k}$, $k=0,1,2,3$. In order
to overcome this obstacle, we develop a cut-off arguments. Roughly
speaking, an accurate uniform boundness estimates on
$(C)_c$ (Cerami) sequences of the associate energy functional $\Phi_\vr $
enables us to introduce a new functional $\widetilde{\Phi}_\vr $ by
virtue of the cut-off technique so that $\widetilde{\Phi}_\vr $ has
the same least energy solutions as ${\Phi}_\vr $ and can be dealt
with more easily, in particular, the influence of these nonlocal
terms can be reduced as $\vr \to0$. In addition, for obtaining the
exponential decay, since the Kato's inequality seems not work well
in the present situation, we handle, instead of considering $\Delta
|u|$ as in \cite{Ding2010}, the square of $|u|$, that is $\Delta
|u|^2$, with the help of identity \eqref{identity}, and then
describe the decay at infinity in a subtle way.

%\textbf{Notation}: Throughout the paper we shall denote by $C,C_1,C_2$ etc. various
%positive constants which may vary from lines to lines and are not essential
%to the problem.

\section{The variational framework}

\subsection{The functional setting and notations}

In this subsection we discuss the variational setting for the
equivalent system \eqref{D3}. Throughout the paper we assume
$0\in\mathscr{P}$ without loss of generality, and the conditions
$(g_1)$-$(g_2)$, $(P_0)$ and $(Q_0)$ are satisfied.

In the sequel, by $|\cdot|_q$ we denote the usual $L^q$-norm, and
$(\cdot,\cdot)_2$ the usual $L^2$-inner product. Let
$H_0=i\alpha\cdot\nabla-a\beta$ denote the self-adjoint operator on
$L^2\equiv L^2(\mathbb{R}^3,\mathbb{C}^4)$ with domain
$\mathcal{D}(H_0)=H^1\equiv H^1(\mathbb{R}^3,\mathbb{C}^4)$. It is
well known that $\sigma(H_0)=\sigma_c(H_0)=\mathbb{R}\setminus(-a,a)$
where $\sigma(\cdot)$ and $\sigma_c(\cdot)$ denote the spectrum and
the continuous spectrum. Thus the space $L^2$ possesses the
orthogonal decomposition:
\begin{equation}\label{l2dec}
L^2=L^+\oplus L^-,\ \ \ \ u=u^++u^-
\end{equation}
so that $H_0$ is positive definite (resp. negative definite) in
$L^+$ (resp. $L^-$). Let $E:=\mathcal{D}(\jdz{H_0}^{1/2})=H^{1/2}$
be equipped with the inner product
$$
\inp{u}{v}=\Re(\jdz{H_0}^{1/2}u,\jdz{H_0}^{1/2}v)_2
$$
and the induced norm $\norm{u}=\inp{u}{u}^{1/2}$, where $\jdz{H_0}$
and $\jdz{H_0}^{1/2}$ denote respectively the absolute value of
$H_0$ and the square root of $|H_0|$. Since
$\sigma(H_0)=\mathbb{R}\setminus(-a,a)$, one has
\begin{equation}\label{l2ineq}
a|u|_2^2\leq\norm{u}^2 \quad  \mathrm{for\ all\ }u\in E.
\end{equation}
Note that this norm is equivalent to the usual $H^{1/2}$-norm, hence
$E$ embeds continuously into $L^q$ for all $q\in[2,3]$ and compactly
into $L_{loc}^q$ for all $q\in[1,3)$. It is clear that $E$ possesses
the following decomposition
\begin{equation}\label{Edec}
E=E^+\oplus E^-\ \ \mathrm{with\ \ }E^{\pm}=E\cap L^{\pm},
\end{equation}
orthogonal with respect to both $(\cdot,\cdot)_2$ and
$\inp{\cdot}{\cdot}$ inner products. This decomposition induces also
a natural decomposition of $L^p$, hence there is $d_p>0$ such that
\begin{equation}\label{lpdec}
d_p\jdz{u^\pm}_p^p\leq\jdz{u}_p^p\ \ \mathrm{for\ all\ }u\in E.
\end{equation}

Let $\mathcal{D}^{1,2}\equiv
\mathcal{D}^{1,2}(\mathbb{R}^3,\mathbb{R})$ be the completion of
$C_c^\infty(\mathbb{R}^3,\mathbb{R})$ with respect the Dirichlet
norm
$$
\norm{u}_{\mathcal{D}}^2=\int \jdz{\nabla u}^2dx.
$$
Then (\ref{D3}) can be reduced to a single equation with a non-local
term. Actually, since $Q$ is bounded and $u\in L^q$ for all
$q\in[2,3]$, one has $Q_\vr \jdz{u}^2\in L^{6/5}$ for all $u\in E$,
and there holds, for all $v\in\mathcal{D}^{1,2}$,
\begin{equation}\label{R1}
\begin{aligned}
\jdz{\int Q_\vr (x)J_k\cdot vdx}&\leq\bkt{\int \jdz{Q_\vr (x)\jdz{u}^2}^{6/5}dx}^{5/6}\bkt{\int \jdz{v}^6}^{1/6}\\
 &\leq S^{-1/2}\jdz{Q_\vr \jdz{u}^2}_{6/5}\norm{v}_{\mathcal{D}},
\end{aligned}
\end{equation}
where $S$ is the Sobolev embedding constant: $S|u|^2_6\leq
\|u\|^2_{\mathcal{D}}$ for all $u\in \mathcal{D}^{1,2}$. Hence there
exists a unique $A_{\vr ,u}^k\in\mathcal{D}^{1,2}$ for $k=0,1,2,3$
such that
\begin{equation}\label{solution of poisson}
\int \nabla A_{\vr ,u}^k\nabla v dx=\vr ^24\pi\int Q_\vr (x)J_k
v\,dx
\end{equation}
for all $v\in\mathcal{D}^{1,2}$. It follows that $A_{\vr ,u}^k$
satisfies the Poisson equation
$$
-\Delta A_{\vr ,u}^k=\vr ^24\pi Q_\vr (x)J_k
$$
and there holds
\begin{equation}\label{juan ji}
A_{\vr ,u}^k(x)=\vr ^2\int
\frac{Q_\vr (y)J_k(y)}{\jdz{x-y}}dy=\frac{\vr ^2}{\jdz{x}}*(Q_\vr  J_k).
\end{equation}
Substituting $A_{\vr ,u}^k$, $k=0,1,2,3$, in (\ref{D3}), we are led
to the equation
\begin{equation}\label{D22}
H_0 u-\omega u-Q_\vr (x)A_{\vr ,u}^0u
+\sum_{k=1}^3Q_\vr (x)\alpha_kA_{\vr ,u}^ku=P_\vr (x)g(\jdz{u})u.
\end{equation}

On $E$ we define the functional
$$
\Phi_\vr (u)=\frac{1}{2}\bkt{\|u^+\|^2-\|u^-\|^2
-\omega\jdz{u}_2^2}-\Gamma_\vr (u)-\Psi_\vr (u)
$$
for $u=u^++u^-$, where
$$
\Gamma_\vr (u)=\frac{1}{4}\int Q_\vr (x)A_{\vr ,u}^0(x)J_0dx
-\frac{1}{4}\sum_{k=1}^3\int Q_\vr (x)A_{\vr ,u}^kJ_k\,dx
$$
and
$$
\Psi_\vr (u)=\int P_\vr (x)G(\jdz{u})dx.
$$

\subsection{Technical results}

In this subsection, we shall introduce some lemmas that related
to the functional $\Phi_\vr$.

\begin{Lem}\label{variation}
Under the hypotheses $(g_1)$-$(g_2)$,  one has
$\Phi_\vr \in C^2(E,\R)$ and any critical point
of $\Phi_\vr $ is a solution of \eqref{D3}.
\end{Lem}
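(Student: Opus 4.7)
The plan is to decompose $\Phi_\vr = Q - \Gamma_\vr - \Psi_\vr$, where $Q(u)=\frac12\bkt{\|u^+\|^2-\|u^-\|^2-\omega\jdz{u}_2^2}$, and verify $C^2$-regularity piece by piece. The quadratic part $Q$ is a continuous quadratic form on $E$ (by equivalence of $\|\cdot\|$ with the $H^{1/2}$-norm and \eqref{l2ineq}), hence of class $C^\infty$ with $\inp{Q'(u)}{v}=\inp{u^+}{v^+}-\inp{u^-}{v^-}-\omega\Re(u,v)_2$. For $\Psi_\vr$, standard Nemytskii arguments apply: from $(g_1)$ one has $g(s)s\leq c_1(s+s^{p-1})$ and $G(s)\leq c_2(s^2+s^p)$ with $p\in(2,3)$, and since $E\hookrightarrow L^q$ continuously for every $q\in[2,3]$, the map $u\mapsto P_\vr g(\jdz{u})u$ is well defined from $E$ into $E^\ast$; combining the polynomial bound on $g$ with $g\in C^1(0,\infty)$ and $g'>0$ then yields $\Psi_\vr\in C^2(E,\R)$ with $\inp{\Psi_\vr'(u)}{v}=\Re\int P_\vr\, g(\jdz{u})u\cdot\bar v\,dx$.

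The bulk of the work is on the nonlocal term. Substituting \eqref{juan ji} into the definition of $\Gamma_\vr$ yields
\[
\Gamma_\vr(u)=\frac{\vr^2}{4}\int\!\!\int\frac{Q_\vr(x)Q_\vr(y)}{\jdz{x-y}}\bkt{J_0(x)J_0(y)-\sum_{k=1}^3 J_k(x)J_k(y)}dx\,dy,
\]
which exhibits $\Gamma_\vr$ as a homogeneous quartic form in $u$. Boundedness of the underlying symmetric $4$-linear form on $E^4$ follows from Hardy-Littlewood-Sobolev applied to $Q_\vr J_k\in L^{6/5}$ (using $(Q_0)$, $\jdz{J_k}\leq\jdz{u}^2$, and the embedding $E\hookrightarrow L^{12/5}$), in the spirit of \eqref{R1}. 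A continuous polynomial of degree four is automatically of class $C^\infty$, and computing the first derivative by symmetry of the $4$-form and using \eqref{solution of poisson} to collect the four identical contributions gives
\[
\inp{\Gamma_\vr'(u)}{v}=\Re\int Q_\vr A_{\vr,u}^0\, u\cdot\bar v\,dx-\Re\sum_{k=1}^3\int Q_\vr A_{\vr,u}^k\,\alpha_k u\cdot\bar v\,dx.
\]

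Assembling the three derivatives, $\inp{\Phi_\vr'(u)}{v}=0$ for every $v\in E$ is exactly the weak formulation of \eqref{D22}; combined with the definition \eqref{solution of poisson} of the potentials $A_{\vr,u}^k$, this is equivalent to the full system \eqref{D3}, as desired. The main obstacle is the nonlocal term $\Gamma_\vr$: one must carefully establish boundedness of the underlying $4$-linear form via Hardy-Littlewood-Sobolev, and then exploit its symmetry when differentiating so that the four copies of the nonlocality collapse into the single potentials $A_{\vr,u}^k$ appearing in the Euler-Lagrange equation \eqref{D22}. The bookkeeping for the $C^2$ bound on $\Psi_\vr'$ near $\jdz{u}=0$, where $g$ is only assumed $C^1$ on $(0,\infty)$, is the other place that requires some attention, but is handled by the combination of $g(0)=0$, $g'>0$, and the subcritical polynomial bound in $(g_1)$.
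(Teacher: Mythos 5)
Your proposal is correct and follows essentially the same route as the paper. Both arguments hinge on the quartic double-integral representation of $\Gamma_\vr$ with the Coulomb kernel and on bounding the resulting Coulomb energy using $Q_\vr J_k\in L^{6/5}$ together with the embedding $E\hookrightarrow L^{12/5}$ (you invoke Hardy--Littlewood--Sobolev directly, the paper passes through $\mathcal{D}^{1,2}$ and the Poisson equation, which is equivalent); both then read off the Euler--Lagrange equation from the explicit first derivative. The one stylistic difference is that you package the smoothness of $\Gamma_\vr$ as ``a bounded homogeneous quartic form on a Banach space is automatically $C^\infty$,'' whereas the paper establishes the estimates \eqref{estimates of Gamma-eps0}--\eqref{estimates of Gamma-eps2} on $\Gamma_\vr$, $\Gamma'_\vr$, $\Gamma''_\vr$ individually; the abstract framing is a little cleaner but buys nothing substantive, and in fact the explicit estimates \eqref{estimates of Gamma-eps0}--\eqref{estimates of Gamma-eps2} are reused later in the paper (e.g.\ in Lemmas \ref{PScseq eps estimate} and \ref{lemR1}), so they need to be recorded regardless.
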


\begin{proof}
Clearly, $\Psi_\vr\in C^2(E,\R)$. It remains to check that
$\Ga_\vr\in C^2(E,\R)$. It suffices to show that, for any $u,v\in
E$,
\begin{equation}\label{estimates of Gamma-eps0}
\jdz{\Ga_\vr (u)}\leq\vr ^2C_1\jdz{Q}_\infty^2\norm{u}^4,
\end{equation}
\begin{equation}\label{estimates of Gamma-eps1}
\jdz{\Ga_\vr '(u)v}\leq\vr
^2C_2\jdz{Q}_\infty^2\norm{u}^3\norm{v},
\end{equation}
\begin{equation}\label{estimates of Gamma-eps2}
\jdz{\Gamma_\vr ''(u)[v,v]}\leq\vr
^2C_3\jdz{Q}_\infty^2\norm{u}^2\norm{v}^2.
\end{equation}

Observe that one has, by \eqref{R1} and \eqref{solution of poisson}
with $v= A_{\vr ,u}^k$,
\begin{equation}\label{Ajsinequ}
|A_{\vr ,u}^k|_6\leq S^{-1/2}\|A_{\vr ,u}^k\|_{\mathcal{D}}
\leq \vr ^2C_1\jdz{Q}_\infty\norm{u}^2.
\end{equation}
This, together with the H\"older inequality (with $r=6, r'=6/5$),
implies \eqref{estimates of Gamma-eps0}. Note that $\Ga'_\vr(u)v
= \frac{d}{dt}\Ga_\vr(u+tv)\big|_{t=0}$, so
\begin{equation}\label{R2}
\aligned \Ga'_{\vr}(u)v =
&\frac{\vr^2}{2}\iint\frac{Q_\vr(x)Q_\vr(y)}{|x-y|}\Big(J_0(x)\Re[\al_0
u\ov v(y)]+J_0(y)\Re[\al_0u\ov v(x)]\\
& \, -\sum^3_{k=1}\big(J_k(x)\Re[\al_ku\ov
v(y)]+J_k(y)\Re[\al_ku\ov v(x)]\big)\Big)dydx\\
= &\, \int\Big( Q_\vr A^0_{\vr,u}\Re[\al_0u\ov v]-\sum^3_{k=1}
Q_\vr A^k_{\vr,u}\Re[\al_ku\ov v] \Big) dx
\endaligned
\end{equation}
which, together with the H\"older's inequality and \eqref{Ajsinequ},
shows \eqref{estimates of Gamma-eps1}. Similarly,
\[\aligned
\Ga^{''}_{\vr}(u)[v,v] = &\,\int Q_\vr\Big(
A^0_{\vr,u}J^v_k-\sum^3_{k=1}A^k_{\vr,u}J^v_k\Big) dx \\
&\,\qquad
+2\,\vr^2\iint\frac{Q_\vr(x)Q_\vr(y)}{|x-y|}\Big[\big(\Re[\al_0u\ov
v(x)]\big)\big(\Re[\al_0u\ov v(y)]\big)\\
&\,\qquad -\sum^3_{k=1}\big(\Re[\al_ku\ov
v(x)]\big)\big(\Re[\al_ku\ov v(y)]\big) \Big] dx dy
\endaligned
\]
where $J^u_k=\al_ku\ov u$ and $J^v_k=\al_kv\ov v$, and one gets
\eqref{estimates of Gamma-eps2}.

Now it is a standard to verify that critical points of $\Phi_\vr$
are solutions of \eqref{D3}.
\end{proof}

\medskip

We show further the following:
\begin{Lem}\label{Gamma-eps nonnegative}
For every $\vr >0$, $\Ga_\vr $ is nonnegative and
weakly sequentially lower semi-continuous.
\end{Lem}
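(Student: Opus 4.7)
The plan is to substitute the convolution representation \eqref{juan ji} for $A^k_{\vr,u}$ into the definition of $\Gamma_\vr(u)$, which recasts it as the double integral
\[
\Gamma_\vr(u)=\frac{\vr^2}{4}\iint_{\R^3\times\R^3}\frac{Q_\vr(x)\,Q_\vr(y)}{\jdz{x-y}}\Bigl(J_0(x)J_0(y)-\sum_{k=1}^3 J_k(x)J_k(y)\Bigr)\,dx\,dy.
\]
Since $Q\ge0$, the Coulomb-type kernel $Q_\vr(x)Q_\vr(y)/\jdz{x-y}$ is nonnegative a.e.\ on $\R^6$, so both assertions of the lemma will follow from pointwise properties of the Dirac bracket $J_0(x)J_0(y)-\sum_k J_k(x)J_k(y)$.

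For nonnegativity I first apply Cauchy--Schwarz in $\R^3$ to obtain
\[
\sum_{k=1}^3 J_k(x)J_k(y)\le\Bigl(\sum_{k=1}^3 J_k(x)^2\Bigr)^{1/2}\Bigl(\sum_{k=1}^3 J_k(y)^2\Bigr)^{1/2},
\]
reducing matters to the pointwise Fierz-type inequality $\sum_{k=1}^3 J_k(z)^2\le J_0(z)^2$. This follows from the Clifford relations $\alpha_k^2=I$ and $\alpha_j\alpha_k+\alpha_k\alpha_j=2\delta_{jk}I$: for every unit vector $n=(n_1,n_2,n_3)\in S^2$ the Hermitian matrix $M_n:=\sum_k n_k\alpha_k$ satisfies $M_n^2=I$, and so Cauchy--Schwarz on $\C^4$ gives
\[
\Bigl(\sum_k n_kJ_k(z)\Bigr)^{2}=(u^*M_nu)^2\le(u^*u)\,(u^*M_n^2u)=J_0(z)^2.
\]
Taking $n$ parallel to $(J_1(z),J_2(z),J_3(z))$ (the case where this vector vanishes being trivial) yields the claim; together with $J_0=\jdz{u}^2\ge0$ this gives $J_0(x)J_0(y)\ge\sum_k J_k(x)J_k(y)$ pointwise and hence $\Gamma_\vr(u)\ge0$.

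For weak sequential lower semi-continuity, let $u_n\rightharpoonup u$ in $E$. By the standard subsequence principle it suffices to bound $\liminf_j\Gamma_\vr(u_{n_j})$ below by $\Gamma_\vr(u)$ along any given subsequence. Using the compact embedding $E\hookrightarrow L^2_{\loc}$ I would pass to a further subsequence along which $u_{n_j}\to u$ a.e.\ on $\R^3$; then $J_k^{u_{n_j}}(z)\to J_k^u(z)$ a.e., and by Fubini the full integrand of $\Gamma_\vr(u_{n_j})$ converges a.e.\ on $\R^6$ to that of $\Gamma_\vr(u)$. Because this integrand is pointwise nonnegative by the previous step, Fatou's lemma delivers $\Gamma_\vr(u)\le\liminf_j\Gamma_\vr(u_{n_j})$.

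The only substantive step is the pointwise Dirac-algebra inequality $\sum_k J_k^2\le J_0^2$, which rests on the Clifford anticommutation relations of the $\alpha_k$; everything else (Cauchy--Schwarz, subsequence extraction, and Fatou's lemma) is routine.
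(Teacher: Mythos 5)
Your proof is correct and follows the same high-level architecture as the paper's --- rewrite $\Gamma_\vr$ via the convolution representation \eqref{juan ji} as a double integral over $\R^6$, establish pointwise nonnegativity of the integrand from a Dirac-algebra inequality, then get lower semi-continuity from a.e.\ convergence and Fatou. Where you differ is in how you obtain the pointwise inequality $J_0(x)J_0(y)\geq\sum_kJ_k(x)J_k(y)$. You prove the Fierz-type bound $\sum_{k=1}^3J_k^2\leq J_0^2$ directly from $M_n^2=I$ and Cauchy--Schwarz in $\C^4$, and then apply a separate Cauchy--Schwarz in $\R^3$ to pass from the diagonal bound to the bilinear one. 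The paper instead imports from Esteban--Georgiev--S\'er\'e the refined estimate \eqref{XX1}--\eqref{XX2}, $(\bt u,u)^2+\sum_k(\bt u,\pi_ku)^2\leq|u|^4$ with $\pi_k=\bt\al_k$, which with the specific choice of $\xi$ gives the bilinear bound \eqref{XX3} in one shot and in a form that also carries the extra $(\bt u,u)$ terms. For the present lemma the two routes are equivalent and yours is arguably more elementary and self-contained; the paper's stronger form, with the $\bt$ and $\pi_k$ terms and its bilinear variant for two different spinors $z$ and $u$, is what gets reused later in the proof of Lemma \ref{unique t}, which is presumably why the authors set it up in that generality here. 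Your handling of the lower semi-continuity step (subsequence extraction plus Fatou on the nonnegative integrand) matches the paper, if anything with a bit more care about the subsequence bookkeeping.
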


\begin{proof}
Firstly, let us recall some technical results in \cite{Sere1}:
For any $\xi=(\xi_0,\xi_1,\xi_2,\xi_3)\in\R^4$ and $u\in\C^4$, we have
\begin{equation}\label{XX1}
\aligned
&\bigg|
\xi_0(\bt u,u)+\sum_{k=1}^3\xi_k(\al_k u,u)
\bigg|^2    \\
=&\, \bigg|
\bigg( \bt u, \Big[ \xi_0 + \sum_{k=1}^3\xi_k\pi_k \Big] u \bigg)
\bigg|^2    \\
\leq&\, |\bt u|^2_{\C^4} \bigg(
u, \Big( \xi_0-\sum_{k=1}^3\xi_k\pi_k \Big)
\Big( \xi_0+\sum_{k=1}^3\xi_k\pi_k \Big) u
\bigg)   \\
=&\, |\xi|^2_{\R^4} |u|^4_{\C^4}.
\endaligned
\end{equation}
Here, we have used the formulas $(u,v)=u\bar v$ for all
$u,v\in\C^4$, $\pi_k=\bt\cdot\al_k$,
$\bt^*=\bt$, $\pi_k^*=-\pi_k$ and
$\pi_i\pi_j+\pi_j\pi_j=-2\de_{ij}$, $1\leq i,j\leq3$. As a consequence, we find
\begin{equation}\label{XX2}
(\bt u, u)^2+\sum_{k=1}^3(\bt u, \pi_k u)^2 \leq |u|^4_{\C^4}.
\end{equation}
So, taking $u(x)\in E$, $x,y\in\R^3$, $\xi_0=\pm(\bt u, u)(y)$, $\xi_k=(\bt u, \pi_k u)(y)$, we get from \eqref{XX1} and \eqref{XX2}
that
\begin{equation}\label{XX3}
\aligned
&\pm(\bt u, u)(y)(\bt u, u)(x) +
\sum_{k=1}^3(\bt u, \pi_k u)(y)(\bt u, \pi_k u)(x)   \\
=&\,\pm(\bt u, u)(y)(\bt u, u)(x) +
\sum_{k=1}^3(\al_k u, u)(y)(\al_k u, u)(x)   \\
\leq&\, |\xi|_{\R^4} |u(x)|_{\C^4}^2\leq |u(y)|_{\C^4}^2|u(x)|_{\C^4}^2.
\endaligned
\end{equation}

It is not difficult to see from \eqref{XX3} that
\begin{equation}\label{sere ineq}
J_0(x)J_0(y)-\sum_{k=1}^3J_k(x)J_k(y)\geq0.
\end{equation}
And hence (see \eqref{juan ji})
$$
\Ga_\vr (u)=\frac{\vr ^2 }{4}\iint\frac{Q_\vr (x)Q_\vr (y)\bkt{J_0(x)J_0(y)
-\sum_{k=1}^3J_k(x)J_k(y)}}{\jdz{x-y}}dxdy\geq0.
$$
And if $u_n\rightharpoonup u$ in $E$, then $u_n\rightarrow u$ a.e..
Therefore (\ref{sere ineq}) and Fatou's lemma yield
$$
\Ga_\vr (u)\leq\liminf_{n\to\infty}\Ga_\vr (u_n)
$$
as claimed.
\end{proof}

Set, for $r>0$, $B_r=\{u\in E:\norm{u}\leq r\}$, and for $e\in E^+$
$$E_e:=E^-\oplus\mathbb{R}^+e$$
with $\R^+=[0,+\infty)$. In virtue of the assumptions
$(g_1)$-$(g_2)$, for any $\de>0$, there exist $r_\de>0, c_\de>0$ and $c_\de'>0$ such that
\begin{equation}\label{g estimates}
\left\{
\begin{aligned}
&g(s)<\de\ \ \mathrm{for\ all\ }0\leq s \leq r_\de;\\
&G(s)\geq c_\de\,s^\theta-\de\,s^2\ \ \mathrm{for\ all\ } s\geq 0;\\
&G(s)\leq \de\,s^2+c_\de'\,s^p\ \ \mathrm{for\ all\ } s\geq 0
\end{aligned}\right.
\end{equation}
and
\begin{equation}\label{R9}
\widehat{G}(s):=\frac12g(s)s^2-G(s)\geq\frac{\theta-2}{2\theta}g(s)s^2
\geq\frac{\theta-2}{2}G(s)\geq c_\theta s^\sigma
\end{equation}
for all $s\geq 0$, where $c_\theta=c_0(\theta-2)/2$.

\begin{Lem}\label{max Phi-eps<C}
For all $\vr \in(0,1]$, $\Phi_\vr $ possess the linking structure:
\begin{itemize}
\item[$1)$] There are $r>0$ and $\tau>0$, both independent of $\vr $, such
that $\Phi_\vr |_{B_r^+}\geq0$ and $\Phi_\vr |_{S_r^+}\geq\tau$, where
\[
B_r^+=B_r\cap E^+=\{u\in E^+:\|u\|\leq r\},
\]
\[
S_r^+=\pa B_r^+=\{u\in E^+:\|u\|= r\}.
\]
\item[$2)$] For any $e\in E^+\setminus\{0\}$, there exist $R=R_e>0$ and
$C=C_e>0$, both independent of $\vr $, such that, for all $\vr >0$,
there hold $\Phi_\vr (u)<0$ for all $u\in E_e\setminus B_R$ and
$\max\Phi_\vr (E_e)\leq C$.
\end{itemize}
\end{Lem}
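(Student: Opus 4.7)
The plan is to exploit three ingredients: the spectral gap
$\mu:=1-|\omega|/a>0$ furnished by $\omega\in(-a,a)$ together with
\eqref{l2ineq}; the non-negativity $\Gamma_\vr\geq 0$ from
Lemma \ref{Gamma-eps nonnegative}; and the superquadratic lower
bound $G(s)\geq c_0 s^\sigma$ of $(g_2)$. All $\vr$-dependence
enters only through a factor $\vr^2\leq 1$ on $\Gamma_\vr$
(via Lemma \ref{variation}) and through $|P|_\infty$, $\inf P$ on
$\Psi_\vr$, so uniformity in $\vr\in(0,1]$ is automatic. The
bookkeeping rests on the two elementary consequences of
\eqref{l2ineq},
\[
\tfrac{1}{2}(\|u^+\|^2-\omega|u^+|_2^2)\geq\tfrac{\mu}{2}\|u^+\|^2,
\qquad
\tfrac{1}{2}(\|u^-\|^2+\omega|u^-|_2^2)\geq\tfrac{\mu}{2}\|u^-\|^2,
\]
which are the quantitative form of the positive/negative
definiteness of the quadratic part of $\Phi_\vr$ on $E^\pm$.

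For part $(1)$, restricting to $u\in E^+$ yields
$\Phi_\vr(u)\geq\frac{\mu}{2}\|u\|^2-\Gamma_\vr(u)-\Psi_\vr(u)$.
By \eqref{estimates of Gamma-eps0} with $\vr\leq 1$ one has
$\Gamma_\vr(u)\leq C|Q|_\infty^2\|u\|^4$; by the third inequality
of \eqref{g estimates} combined with the embedding
$E\hookrightarrow L^2\cap L^p$ one has
$\Psi_\vr(u)\leq \delta C'\|u\|^2+C''\|u\|^p$. Picking first
$\delta$ small enough and then $r>0$ small enough absorbs these
two subtractions into $\frac{\mu}{4}\|u\|^2$, delivering
$\Phi_\vr(u)\geq 0$ on $B_r^+$ and $\Phi_\vr(u)\geq\tau:=\mu r^2/4$
on $S_r^+$, with constants independent of $\vr$.

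For part $(2)$, fix $e\in E^+\setminus\{0\}$ and parametrise $E_e$
by $u=se+v$ with $s\geq 0$, $v\in E^-$. Using the gap inequalities
together with $\Gamma_\vr\geq 0$ and
$\Psi_\vr(u)\geq c_1|u|_\sigma^\sigma$, where $c_1:=c_0\inf P>0$,
one gets
\[
\Phi_\vr(u)\leq\tfrac{1}{2}(\|e\|^2-\omega|e|_2^2)s^2
-\tfrac{\mu}{2}\|v\|^2-c_1|u|_\sigma^\sigma.
\]
The first term alone is bounded on $\overline{B_{R_e}}\cap E_e$ by a
constant depending only on $e$ and $R_e$, which gives the required
$\max\Phi_\vr(E_e)\leq C_e$ once $R_e$ is fixed. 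To produce $R_e$
I argue by contradiction: if $u_n=s_ne+v_n\in E_e$ and
$\vr_n\in(0,1]$ satisfy $\|u_n\|\to\infty$ and
$\Phi_{\vr_n}(u_n)\geq -M$, the displayed estimate rearranges to
$\frac{\mu}{2}\|v_n\|^2+c_1|u_n|_\sigma^\sigma\leq
\frac{1}{2}(\|e\|^2-\omega|e|_2^2)s_n^2+M$, so bounded $s_n$ would
force bounded $\|v_n\|$ and hence bounded $\|u_n\|$, a
contradiction; thus $s_n\to\infty$.

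Setting $w_n:=v_n/s_n\in E^-$ and dividing the rearranged bound by
$s_n^2$, one sees that both $\|w_n\|$ and
$s_n^{\sigma-2}|e+w_n|_\sigma^\sigma$ stay bounded, whence $\sigma>2$
forces $|e+w_n|_\sigma\to 0$. The crux of the argument is this last
step: after passing to a subsequence $w_n\rightharpoonup w$ in $E$,
and because $E^-$ is closed, $w\in E^-$; the continuous embedding
$E\hookrightarrow L^\sigma$ makes this convergence also weak in
$L^\sigma$, and the strong convergence $e+w_n\to 0$ in $L^\sigma$
then pins the weak $L^\sigma$-limit down to $e+w=0$, i.e.\ $w=-e$.
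Since $w\in E^-$ and $-e\in E^+$ with $E^+\cap E^-=\{0\}$, one
concludes $e=0$, contradicting $e\neq 0$. This delivers the
$\vr$-uniform radius $R_e$ outside of which $\Phi_\vr<0$ on $E_e$,
completing the verification.
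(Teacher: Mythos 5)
Your part $1$) coincides with the paper's proof: spectral gap from \eqref{l2ineq}, the quartic bound \eqref{estimates of Gamma-eps0} on $\Gamma_\vr$ (with $\vr^2\leq 1$), and the small-$\delta$ upper estimate for $G$ in \eqref{g estimates}, then shrink $r$ to absorb the superquadratic terms. Nothing to add there.

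For part $2$) you take a genuinely different route from the paper, and it is correct. The paper's argument is a one-line direct estimate: using the lower bound $G(s)\geq c_\de s^\theta-\de s^2$ from \eqref{g estimates} together with the $L^\theta$-projection inequality \eqref{lpdec} (i.e.\ $d_\theta\,|u^+|_\theta^\theta\leq|u|_\theta^\theta$, so $|u|_\theta^\theta\geq d_\theta s^\theta|e|_\theta^\theta$ on $E_e$), it obtains in \eqref{linking ineq} an explicit bound of the form $\Phi_\vr(u)\leq C_1 s^2-C_2\|v\|^2-C_3 s^\theta$ with $\theta>2$, from which both $\max\Phi_\vr(E_e)\leq C_e$ and $\Phi_\vr\to-\infty$ as $\|u\|\to\infty$ in $E_e$ follow by inspection, uniformly in $\vr$. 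You instead use only the crude lower bound $G(s)\geq c_0 s^\sigma$ from $(g_2)$, which gives you $|u|_\sigma^\sigma$ rather than something superquadratic in $s$ alone, and then compensate with a weak-compactness contradiction argument: rescale $w_n=v_n/s_n$, extract $|e+w_n|_\sigma\to 0$, pass to a weak limit $w\in E^-$ (using $E^-$ weakly closed and $E\hookrightarrow L^\sigma$, which needs $\sigma\in(2,3)$ — this indeed follows from $(g_1)$–$(g_2)$ as the paper later notes), and conclude $-e=w\in E^+\cap E^-=\{0\}$, a contradiction. The trade-off: the paper's version is more elementary once one grants the $L^\theta$-boundedness of the spectral projections underlying \eqref{lpdec}, while your version dispenses with that projection inequality entirely at the cost of a subsequence/weak-convergence argument and only yields coercivity-to-$-\infty$ indirectly. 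Both deliver the same $\vr$-uniform $R_e$ and $C_e$. One presentational remark: in your write-up the bound $\max\Phi_\vr(E_e)\leq C_e$ is asserted "once $R_e$ is fixed" before $R_e$ has been produced; logically the contradiction argument should come first, and then $C_e=\tfrac{R_e^2}{2\|e\|^2}(\|e\|^2-\omega|e|_2^2)$ follows since $s\|e\|\leq\|u\|\leq R_e$ on $\overline{B_{R_e}}\cap E_e$ and the remaining terms in your displayed estimate are nonpositive.
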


\begin{proof}
Recall that $\jdz{u}_p^p\leq C_p\norm{u}^p$ for all $u\in E$ by
Sobolev's embedding theorem. 1) follows easily because, for $u\in E^+$
and $\de>0$ small enough
\[
\begin{aligned}
\Phi_\vr (u)&=\frac{1}{2}\norm{u}^2-\frac{\omega}{2}\jdz{u}_2^2
 -\Gamma_\vr (u)-\Psi_\vr (u)\\
 &\geq\frac{1}{2}\norm{u}^2-\frac{\omega}{2}\jdz{u}_2^2
 -\vr ^2C_1\jdz{Q}_\infty^2\norm{u}^4
 -{\jdz{P}_\infty}\bkt{\de\jdz{u}_2^2+c_\de'\jdz{u}_p^p}\\
\end{aligned}
\]
with $C_1$, $C_p$ independent of $u$ and $p>2$ (see \eqref{estimates
of Gamma-eps0} and \eqref{g estimates}).

For checking 2), take $e\in E^+\setminus\{0\}$. In virtue of
(\ref{lpdec}) and \eqref{g estimates}, one gets, for $u=se+v\in
E_e$,
\begin{equation}\label{linking ineq}
\begin{split}
\Phi_\vr (u)=&\,\frac{1}{2}\norm{se}^2-\frac{1}{2}\norm{v}^2
-\frac{\omega}{2}\jdz{u}_2^2
-\Gamma_\vr (u)-\Psi_\vr (u)\\
\leq&\, \frac{1}{2}s^2\norm{e}^2-\frac{1}{2}\norm{v}^2 - c_\de
d_\theta \inf P\cdot s^\theta\jdz{e}_\theta^\theta
\end{split}
\end{equation}
proving the conclusion.
\end{proof}

Recall that a sequence $\{u_n\}\subset E$ is called to be a
$(PS)_c$-sequence for functional $\Phi\in C^1(E,\R)$
if $\Phi(u_n)\to c$ and $\Phi'(u_n)\to 0$, and is called to be
$(C)_c$-sequence for $\Phi$ if $\Phi(u_n)\to c$ and
$(1+\|u_n\|)\Phi'(u_n)\to 0$. It is clear
that if $\{u_n\}$ is a $(PS)_c$-sequence with $\{\|u_n\|\}$ bounded
then it is also a $(C)_c$-sequence. Below we are going to study
$(C)_c$-sequences for $\Phi_\vr$ but firstly we observe the
following

\begin{Lem}\label{A(u) bdd}
Let $\{u_n\}\subset E\setminus\{0\}$ be bounded in
$L^\sigma(\R^3)$, where $\sigma>0$ is the constant in $(g_2)$.
Then $\hbt{\frac{A_{\vr ,u_n}^k}{\norm{u_n}}}$ is
bounded in $L^6(\R^3)$ uniformly in $\vr \in(0,1]$, for
$k=0,1,2,3$.
\end{Lem}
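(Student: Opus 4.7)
The plan is to test the defining relation \eqref{solution of poisson} with $v=A^k_{\vr,u_n}$ itself and then to apply H\"older's inequality in such a way that one of the two factors of $u_n$ appearing in $J_k$ is measured in $L^\sigma$ (bounded by hypothesis), while the other is controlled via the Sobolev embedding $E\hookrightarrow L^s$ for a suitable $s\in[2,3]$. Taking $v=A^k_{\vr,u_n}$ in \eqref{solution of poisson} and using $|J_k|\leq |u_n|^2$ (Cauchy--Schwarz, since each $\al_k$ is Hermitian and unitary) together with $Q_\vr\leq|Q|_\infty$ yields
\[
\|A^k_{\vr,u_n}\|_{\mathcal{D}}^2 \;\leq\; 4\pi\vr^2|Q|_\infty\int|u_n|^2\,|A^k_{\vr,u_n}|\,dx.
\]

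Next I would apply H\"older's inequality to the right hand side with three exponents $\sigma$, $s$, $6$ where $s:=\frac{6\sigma}{5\sigma-6}$ is determined by $\frac{1}{\sigma}+\frac{1}{s}+\frac{1}{6}=1$. For this step to deliver anything useful I need $s\in[2,3]$, so that both the embedding $E\hookrightarrow L^s$ and the Sobolev inequality $|A^k_{\vr,u_n}|_6\leq S^{-1/2}\|A^k_{\vr,u_n}\|_{\mathcal{D}}$ apply. Although $(g_2)$ only asserts $\sigma>2$, the upper bound $g(t)\leq c_1(1+t^{p-2})$ from $(g_1)$ forces $G(t)\leq C(t^2+t^p)$, so the lower bound $c_0 t^\sigma\leq G(t)$ can hold for large $t$ only if $\sigma\leq p<3$; hence $s\in[2,3)$ as required. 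The combination of H\"older and Sobolev then gives
\[
\|A^k_{\vr,u_n}\|_{\mathcal{D}}^2 \;\leq\; 4\pi\vr^2|Q|_\infty S^{-1/2}\,|u_n|_\sigma\,|u_n|_s\,\|A^k_{\vr,u_n}\|_{\mathcal{D}},
\]
and cancelling a factor of $\|A^k_{\vr,u_n}\|_{\mathcal{D}}$, using $\vr\leq 1$ and the embedding $|u_n|_s\leq C\|u_n\|$, leaves $\|A^k_{\vr,u_n}\|_{\mathcal{D}}\leq C'|u_n|_\sigma\|u_n\|$ with $C'$ independent of $\vr\in(0,1]$ and $n$. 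One more application of the Sobolev inequality turns this into $|A^k_{\vr,u_n}|_6\leq C''|u_n|_\sigma\|u_n\|$, and division by $\|u_n\|$ delivers the claim using the assumed $L^\sigma$ bound on $\{u_n\}$.

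The main obstacle, to my mind, is the bookkeeping of exponents: the naive estimate \eqref{Ajsinequ} yields only $|A^k_{\vr,u}|_6\leq \vr^2 C_1|Q|_\infty\|u\|^2$, so dividing by $\|u_n\|$ would give $\|u_n\|$ --- useless unless $\{u_n\}$ is already bounded in $E$. The gain comes entirely from ``spending'' one power of $u_n$ in the $L^\sigma$ norm; this forces the remaining H\"older exponent $s$ into a specific range, and verifying that this range is actually compatible with the embedding $E\hookrightarrow L^s$ requires combining both structural hypotheses on $g$. Once that verification is in hand, the rest is a routine H\"older calculation.
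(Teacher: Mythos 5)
Your proof is correct and follows essentially the same route as the paper: both arguments rest on testing the weak formulation of the Poisson equation, splitting the quadratic density $J_k=(\al_k u_n)\ov{u_n}$ by H\"older so that one factor of $u_n$ is measured in $L^\sigma$ and the other in $L^s$ with $\tfrac1\sigma+\tfrac1s+\tfrac16=1$, and invoking the embeddings $E\hookrightarrow L^s$ and $\mathcal{D}^{1,2}\hookrightarrow L^6$. The only cosmetic difference is that the paper normalizes first (setting $v_n=u_n/\|u_n\|$) and tests against an arbitrary $\psi\in\mathcal{D}^{1,2}$ to read off the $\mathcal D$-norm by duality, whereas you test directly with $A^k_{\vr,u_n}$ and divide by $\|u_n\|$ at the end; you are in fact more careful than the paper in explicitly verifying, from $\sigma\le p<3$, that $s=\tfrac{6\sigma}{5\sigma-6}$ lands in the admissible range $[2,3)$.
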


\begin{proof}
Set $v_n=\frac{u_n}{\norm{u_n}}$. Notice that
$A_{\vr ,u_n}^k$ satisfies the equation
$$
-\Delta A_{\vr ,u_n}^k=\vr ^24\pi Q_\vr (x)(\alpha_ku_n)\overline{u_n},
$$
hence,
$$
-\Delta \frac{A_{\vr ,u_n}^k}{\norm{u_n}}=\vr ^24\pi Q_\vr (x)
(\alpha_ku_n)\overline{v_n}.
$$
Observe that $\norm{v_n}=1$, $E$ embeds continuously into $L^q$ for
$q\in[2,3]$, and
\[
\begin{aligned}
\jdz{\int Q_\vr (x)(\alpha_ku_n)\overline{v_n}\cdot\psi dx}&\leq\jdz{Q}_\infty\jdz{u_n}_\sigma\jdz{v_n}_q\jdz{\psi}_6\\
 &\leq S^{-1/2}\jdz{Q}_\infty\jdz{u_n}_\sigma\jdz{v_n}_q
 \norm{\psi}_{\mathcal{D}}
\end{aligned}
\]
for any $\psi\in\mathcal{D}^{1,2}(\mathbb{R}^3,\mathbb{C}^4)$ and
$\frac{1}{\sigma}+\frac{1}{q}+\frac{1}{6}=1$. We infer
$$\norm{\frac{A_{\vr ,u_n}^k}{\norm{u_n}}}_{\mathcal{D}}
\leq \vr ^2\tilde{C}\jdz{Q}_\infty\jdz{u_n}_\sigma,$$ which yields
the conclusion.
\end{proof}

We now turn to an estimate on boundness of $(C)_c$-sequences which
is the key ingredient in the sequel. Recall that, by $(g_1)$, there
exist $r_1>0$ and $a_1>0$ such that
\begin{equation}\label{R8}
g(s)\leq \frac{a-|\omega|}{2\,|P|_\infty} \quad \text{for all $s\leq
r_1$},
\end{equation}
and, for $s\geq r_1$,  $g(s)\leq a_1 s^{p-2}$, so
$g(s)^{\sigma_0-1}\leq a_2 s^2$ with
$$
\sigma_0:=\frac{p}{p-2}>3
$$
which, jointly with $(g_2)$, yields (see \eqref{R9})
\begin{equation}\label{g-sigma0 estimate}
g(s)^{\sigma_0}\leq a_2 g(s)s^2\leq a_3 \widehat{G}(s)\quad
\text{for all $s\geq r_1$}.
\end{equation}

\begin{Lem}\label{PScseq eps estimate}
For any $\lambda>0$, denoting $I=[0,\lambda]$, there
is $\Lambda>0$ independent of $\vr $ such that, for all
$\vr \in(0,1]$, any $(C)_c$-sequence $\{u_n^\vr \}$ of
$\Phi_\vr $ with $c\in I$, there holds (up to a subsequence)
$$
\norm{u_n^\vr }\leq\Lambda
$$
for all $n\in\mathbb{N}$.
\end{Lem}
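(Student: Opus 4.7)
The plan is to extract from the Cerami condition an a priori $L^\sigma$-bound on $u_n:=u_n^\vr$, then to test $\Phi'_\vr(u_n)$ against $u_n^+-u_n^-$ and use interpolation so that both the nonlocal Maxwell term and the nonlinear term grow \emph{strictly subquadratically} in $\|u_n\|$; Young's inequality will then absorb them and close the estimate.

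The key preliminary step is to note (from the representation derived in the proof of Lemma \ref{Gamma-eps nonnegative}) that $\Gamma_\vr$ is $4$-homogeneous, so Euler's identity gives $\Gamma'_\vr(u)[u]=4\Gamma_\vr(u)$. Since the Cerami condition yields $\Phi'_\vr(u_n)[u_n]=o(1)$, the quadratic parts cancel and
$$
\Gamma_\vr(u_n)+\int P_\vr\widehat{G}(|u_n|)\,dx=\Phi_\vr(u_n)-\tfrac12\Phi'_\vr(u_n)[u_n]\longrightarrow c.
$$
Both summands are nonnegative, by Lemma \ref{Gamma-eps nonnegative} and \eqref{R9}; combined with $\inf P>0$, the estimate \eqref{R9} yields
$$
|u_n|_\sigma\leq\Lambda_0(c),\qquad \Gamma_\vr(u_n)\leq C_c,
$$
uniform in $\vr\in(0,1]$ and $n$. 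This a priori $L^\sigma$-control is the essential input.

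I would next test $\Phi'_\vr(u_n)$ against $u_n^+-u_n^-$. Using the orthogonality of $u_n^\pm$ in both $\inp{\cdot}{\cdot}$ and $(\cdot,\cdot)_2$ together with the pointwise identity $\Re[u_n\overline{(u_n^+-u_n^-)}]=|u_n^+|^2-|u_n^-|^2$, the relation $\Phi'_\vr(u_n)[u_n^+-u_n^-]=o(1)$ becomes
$$
\|u_n\|^2=o(1)+\omega\bigl(|u_n^+|_2^2-|u_n^-|_2^2\bigr)+\Gamma'_\vr(u_n)[u_n^+-u_n^-]+\int P_\vr g(|u_n|)\bigl(|u_n^+|^2-|u_n^-|^2\bigr)\,dx.
$$
The linear term is at most $(|\omega|/a)\|u_n\|^2$ by \eqref{l2ineq}. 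For the Maxwell term the formula \eqref{R2}, H\"older with exponents $(6,12/5,12/5)$, and Lemma \ref{A(u) bdd} together with the Step~1 bound give $|\Gamma'_\vr(u_n)[u_n^+-u_n^-]|\leq C\vr^2\|u_n\|\,|u_n|_{12/5}\,|u_n^+-u_n^-|_{12/5}$; interpolating the $L^{12/5}$-norms between the bounded $L^\sigma$ and either $L^2$ or $L^3$ (depending on whether $\sigma\leq 12/5$ or $\sigma>12/5$) yields $|u_n|_{12/5},|u_n^+-u_n^-|_{12/5}\leq C\|u_n\|^\tau$ with $\tau\in[0,1/2)$, so this term is bounded by $C_c\|u_n\|^{1+2\tau}$, strictly subquadratic.

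For the nonlinear term I would use $||u_n^+|^2-|u_n^-|^2|\leq|u_n||u_n^+-u_n^-|$ and split at $r_1$. On $\{|u_n|\leq r_1\}$ estimate \eqref{R8} with Cauchy--Schwarz and \eqref{l2ineq} produces exactly the contribution $\tfrac{a-|\omega|}{2a}\|u_n\|^2$, leaving the positive coefficient $\tfrac{a-|\omega|}{2a}$ on the left of the main identity; on $\{|u_n|>r_1\}$, H\"older with exponents $(\sigma_0,p,p)$ together with \eqref{g-sigma0 estimate} and the Step~1 bound controls $(\int g(|u_n|)^{\sigma_0}\chi_{|u_n|>r_1})^{1/\sigma_0}\leq C_c$, while interpolating $L^p$ between the bounded $L^\sigma$ and $L^3$ (possible since $\sigma\leq p<3$) yields a contribution $\leq C_c\|u_n\|^{2s}$ with $s\in[0,1)$. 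Collecting everything,
$$
\tfrac{a-|\omega|}{2a}\|u_n\|^2\leq o(1)+C_c\|u_n\|^\mu,\qquad \mu<2,
$$
and Young's inequality absorbs the right-hand power to produce the uniform bound $\|u_n\|\leq\Lambda(c)$. The chief obstacle is the nonlocal Maxwell term, which naively scales as $\vr^2\|u_n\|^3$ and cannot be made small by shrinking $\vr$ since the statement permits $\vr$ up to $1$; its resolution hinges on the a priori $L^\sigma$-bound from Step~1, used through Lemma \ref{A(u) bdd} and interpolation, which reduces the growth to strictly subquadratic order uniformly in $\vr\in(0,1]$.
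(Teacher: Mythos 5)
Your proof follows the same architecture as the paper's: Euler's identity for the $4$-homogeneous $\Gamma_\vr$ to cancel the quadratic part and extract a uniform $L^\sigma$ bound from the $(C)_c$ condition, then testing $\Phi'_\vr(u_n)$ against $u_n^+-u_n^-$, absorbing the $\omega$-term and the small-$|u_n|$ piece of the nonlinearity into the left side, and controlling the remaining two terms by H\"older plus interpolation so that they grow strictly subquadratically in $\|u_n\|$, uniformly in $\vr\in(0,1]$. The only differences are in the choice of H\"older exponents, and yours are in fact the more robust ones. For the Maxwell term the paper takes $(6,\sigma,q)$ with $q=6\sigma/(5\sigma-6)\in[2,3]$ and interpolates only the third factor, whereas you take $(6,12/5,12/5)$ and interpolate both; both give a subquadratic power. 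For the superlinear part on $\{|u_n|\geq r_1\}$ the paper takes $(\sigma_0,\sigma,t)$ with $t=p\sigma/(2\sigma-p)$ and asserts $2<t<p$, but $t<p$ is equivalent to $p<\sigma$, which contradicts $\sigma\leq p$; one only has $t\geq p$, and when $\sigma<p$ strictly the exponent $t$ can leave the Sobolev range (e.g.\ $\sigma=2.5$, $p=2.9$ gives $t\approx 3.45>3$), so the bound $|u_n^+-u_n^-|_t\leq C\|u_n\|$ is not available in full generality. Your split $(\sigma_0,p,p)$ keeps both spatial exponents in $[2,3)$ by design, with the $L^\sigma$ bound entering through interpolation of $|u_n|_p$ and $|u_n^+-u_n^-|_p$, and closes the estimate with $C\|u_n\|^{2s}$, $s<1$, without any hidden constraint on the pair $(\sigma,p)$.
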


\begin{proof}
Let $\{u_n^\vr \}$ be a $(C)_c$-sequence of $\Phi_\vr $ with $c\in
I$: $\Phi_\vr(u^\vr_n)\to c$ and
$(1+\|u^\vr_n\|)\|\Phi'_\vr(u^\vr_n)\|\to 0$. Without loss of
generality we may assume that $\norm{u_n^\vr }\geq1$. The form of
$\Phi_\vr$ and the representation \eqref{R2} \,
($\Gamma'_\vr(u)u=4\Gamma_\vr(u)$) \, implies that
\begin{equation}\label{R3}
\begin{aligned}
2\lambda&>c+o(1)=\Phi_\vr (u_n^\vr ) -\frac{1}{2}\Phi_\vr '(u_n^\vr
)u_n^\vr =\Gamma_\vr (u_n^\vr)
 +\int P_\vr (x)\widehat{G}(|u^\vr_n|)\\
\end{aligned}
\end{equation}
and
\begin{equation}\label{R4}
\aligned o(1)=&\,\Phi'_\vr(u^\va_n)(u^{\vr+}_n-u^{\vr-}_n)\\
=&\,\|u^{\vr}_n\|^2-\omega\big(|u^{\vr+}_n|^2_2-|u^{\vr-}_n|^2_2\big)-
\Gamma'_\vr(u^{\vr}_n)(u^{\vr+}_n-u^{\vr-}_n)\\
&\, -\Re\int P_\vr(x)
 g(\jdz{u_n^\vr })u_n^\vr \cdot\overline{u_n^{\vr +}-u_n^{\vr -}}.
\endaligned
\end{equation}
By Lemma \ref{Gamma-eps nonnegative}, \eqref{R9} and \eqref{R3},
$\{u_n^\vr \}$ is bounded in $L^\sigma$ uniformly in $\vr $ with the
upper bound, denoted by $C_1$, depending on $\lambda$, $\sigma$,
$\theta$ and $\inf P$. It follows from \eqref{R4} that
\[
\aligned &\, o(1)+\frac{a-\jdz{\omega}}{a}\|u^\vr_n\|^2 \\ \leq &\,
\Gamma'_\vr(u_n^\vr )(u_n^{\vr+}
 -u_n^{\vr-})
 +\Re\int P_\vr (x)
 g(\jdz{u_n^\vr })u_n^\vr \cdot\overline{u_n^{\vr +}-u_n^{\vr -}}.
\endaligned
\]
This, together with \eqref{R8} and \eqref{l2ineq}, shows
\begin{equation}\label{R5}
\aligned &\, o(1)+\frac{a-\jdz{\omega}}{2a}\|u^\vr_n\|^2 \\ \leq &\,
\Gamma'_\vr(u_n^\vr )(u_n^{\vr+}
 -u_n^{\vr-})
 +\Re\int_{|u^\vr_n|\geq r_1} P_\vr (x)
 g(\jdz{u_n^\vr })u_n^\vr \cdot\overline{u_n^{\vr +}-u_n^{\vr -}}.
\endaligned
\end{equation}

Recall that $(g_1)$ and $(g_2)$ imply $2<\sigma\leq p$. Setting
$t=\frac{p\sigma}{2\sigma-p}$, one sees
\[
2 < t < p, \quad \frac{1}{\sigma_0}+\frac{1}{\sigma}+\frac{1}{t}=1.
\]
By H\"older's inequality, the fact $\Gamma_\vr(u^\vr_n)\geq 0$,
\eqref{g-sigma0 estimate}, \eqref{R3},  the boundedness of
$\{|u^\vr_n|_\sigma\}$ uniformly in $\vr$, and the embedding of $E$
into $L^t$, we have
\begin{equation}\label{R6}
\aligned &\, \int_{|u^\vr_n|\geq r_1} P_\vr(x) \, g(\jdz{u_n^\vr
})\jdz{u_n^\vr }\jdz{u_n^{\vr +}-u_n^{\vr
-}}\\
\leq &\, |P|_\infty\Big(\int_{|u^\vr_n|\geq r_1}
g(|u^\vr_n|)^{\sigma_0}\Big)^{1/\sigma_0} \Big(\int
|u^\vr_n|^\sigma\Big)^{1/\sigma} \Big(|u^{\vr+}_n-
u^{\vr-}_n|^t\Big)^{1/t}\\
\leq &\, C_2\|u^{\vr}_n\|
\endaligned
\end{equation}
with $C_2$ independent of $\vr$.

Let $q=\frac{6\sigma}{5\sigma-6}$. Then $2<q<3$ and
$\frac{1}{\sigma}+\frac{1}{q}+\frac{1}{6}=1$. Set
\[
\zeta=\left\{\aligned & 0 \, & \text{if $q=\sigma$};\\
& \frac{2(\sigma-q)}{q(\sigma-2)}  \, & \text{if $q<\sigma$};\\
& \frac{3(q-\sigma)}{q(3-\sigma)}  \, & \text{if $q>\sigma$}
\endaligned\right.
\]
and note that
$$
|u|_q \leq \left\{\aligned & \jdz{u}_2^{\zeta}
\cdot\jdz{u}_\sigma^{1-\zeta} \, & \mathrm{if}\ 2<q\leq \sigma\\
& \jdz{u}_3^{\zeta}\cdot\jdz{u}_\sigma^{1-\zeta} \, & \mathrm{if}\
\sigma< q<3.
\endaligned\right.
$$
By virtue of the H\"older inequality, Lemma \ref{Gamma-eps
nonnegative}, the boundedness of $\{|u^\vr_n|_\sigma\}$, and the
embedding of $E$ to $L^2$ and $L^3$, we obtain that
\[
\begin{aligned}
&\jdz{\Re\int Q_\vr (x)A_{\vr ,u_n^\vr }^k(\alpha_ku_n^\vr)
\cdot\overline{u_n^{\vr +}-u_n^{\vr -}}}   \\
=&\jdz{\|u^\vr_n\|\Re\int Q_\vr (x)\frac{A_{\vr ,u_n^\vr }^k}{\norm{u_n^\vr }}
(\alpha_ku_n^\vr) \cdot\overline{u_n^{\vr +}-u_n^{\vr -}}}  \\
\leq&\jdz{Q}_\infty \|u^\vr_n\| \jdz{\frac{A_{\vr ,u_n^\vr
}^k}{\norm{u_n^\vr }}}_6\jdz{u_n^\vr }_\sigma\jdz{u_n^{\vr
+}-u_n^{\vr -}}_q\\
\leq &\, \vr^2 C_3\|u^\vr_n\|\,|u^\vr_n|_q\,\leq \,\vr^2
C_4\|u^\vr_n\|^{1+\zeta}
\end{aligned}
\]
with $C_4$ independent of $\vr$. This, together with the
representation of \eqref{R2}, implies that
\begin{equation}\label{R7}
|\Gamma'_\vr(u^\vr_n)(u_n^{\vr +}-u_n^{\vr -})|\leq
C_5\|u^\vr_n\|^{1+\zeta}
\end{equation}
with $C_5$ independent of $\vr$.

Now the combination of \eqref{R5}, \eqref{R6} and \eqref{R7} shows
that
%\begin{equation}\label{x1}
%\frac{M_1}{\norm{u_n^\vr }^{1-\zeta}}+\frac{ M_2}{\norm{u_n^\vr
%}}\geq \, 1
%\end{equation}
\begin{equation}\label{x1}
\|u^\vr_n\|^2\leq M_1\|u^\vr_n\|+M_2{\norm{u_n^\vr }^{1+\zeta}}
\end{equation}
with $M_1$ and $M_2$ being independent of $\vr\leq 1$. Therefore,
either $\|u^\vr_n\|\leq 1$ or there is $\Lambda\geq 1$ independent
of $\vr $ such that
\[
\|u^\vr_n\|\leq \Lambda
\]
as desired.
\end{proof}

Finally, for the later aim we define the operator $\mathcal{A}_{\vr
,k}:E\to\mathcal{D}^{1,2}(\mathbb{R}^3)$ by $\mathcal{A}_{\vr
,k}(u)=A_{\vr ,u}^k$. We have
\begin{Lem}\label{lemma of Aj}
For $k=0,1,2,3$,
\begin{itemize}
\item[$(1)$] $\mathcal{A}_{\vr ,k}$ maps bounded sets into bounded sets;
\item[$(2)$] $\mathcal{A}_{\vr ,k}$ is continuous;
\end{itemize}
\end{Lem}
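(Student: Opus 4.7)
The strategy is to read the Poisson identity \eqref{solution of poisson} both at the level of individual elements (for (1)) and at the level of differences (for (2)). In both cases the main analytic input is the isomorphism $-\Delta\colon\mathcal{D}^{1,2}\to\mathcal{D}^{-1,2}$ combined with the Sobolev embedding $\mathcal{D}^{1,2}\hookrightarrow L^6$; the issue is only to match Lebesgue exponents.

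Part (1) is already contained in estimate \eqref{Ajsinequ}, established during the proof of Lemma \ref{variation}: one has
$$
\|\mathcal{A}_{\vr,k}(u)\|_{\mathcal{D}} = \|A^k_{\vr,u}\|_{\mathcal{D}} \leq \vr^2 C_1\,|Q|_\infty\,\|u\|^2 \qquad \text{for every } u\in E,
$$
so bounded sets in $E$ are sent into bounded sets in $\mathcal{D}^{1,2}$.

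For part (2), let $u_n\to u$ in $E$. Subtracting the two instances of \eqref{solution of poisson} for $u_n$ and $u$ gives, for every $v\in\mathcal{D}^{1,2}$,
$$
\int \nabla\bigl(A^k_{\vr,u_n}-A^k_{\vr,u}\bigr)\cdot\nabla v\,dx = 4\pi\vr^2\int Q_\vr(x)\Bigl[(\alpha_k u_n)\bar{u}_n-(\alpha_k u)\bar{u}\Bigr]v\,dx.
$$
I would then decompose the bracket sesquilinearly as $\alpha_k(u_n-u)\bar{u}_n+\alpha_k u\,\overline{(u_n-u)}$, test against the admissible choice $v:=A^k_{\vr,u_n}-A^k_{\vr,u}\in\mathcal{D}^{1,2}$, and apply H\"older's inequality with exponents $(12/5,12/5,6)$ (these satisfy $5/12+5/12+1/6=1$) together with $|v|_6\leq S^{-1/2}\|v\|_{\mathcal{D}}$, obtaining
$$
\|A^k_{\vr,u_n}-A^k_{\vr,u}\|_{\mathcal{D}} \leq 4\pi S^{-1/2}\vr^2|Q|_\infty\,|u_n-u|_{12/5}\bigl(|u_n|_{12/5}+|u|_{12/5}\bigr).
$$
Because $12/5\in[2,3]$, the continuous embedding $E\hookrightarrow L^{12/5}$ ensures $|u_n-u|_{12/5}\to 0$ while the parenthesised factor remains bounded; hence the right-hand side vanishes and continuity follows.

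There is no real obstacle here, only a bookkeeping step: one must pick a single Lebesgue exponent that is simultaneously admissible for the Sobolev range $[2,3]$ of $E$ and that balances the quadratic source $J_k$ against the target $L^6$ dual to $\mathcal{D}^{1,2}$. The symmetric choice $12/5$ achieves exactly this, and both assertions fall out by elementary estimation directly from the Poisson identity.
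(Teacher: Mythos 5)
Your proposal is correct and follows essentially the same route as the paper: for (1) it invokes the a priori bound \eqref{Ajsinequ}, and for (2) it subtracts the two Poisson identities, decomposes the quadratic source sesquilinearly, and closes the estimate via H\"older with exponents $(12/5,12/5,6)$ (equivalently, bounding the source in $L^{6/5}$) together with the embedding $E\hookrightarrow L^{12/5}$. The only cosmetic difference is that the paper phrases the estimate in terms of the $L^{6/5}$-norm of the source before applying the $\mathcal{D}^{1,2}$--$L^6$ duality, whereas you test directly against $A^k_{\vr,u_n}-A^k_{\vr,u}$; the underlying computation is the same.
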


\begin{proof}
Clearly, (1) is a straight consequence of (\ref{Ajsinequ}). (2)
follows easily because, for $u,v\in E$, one sees that $A_{\vr
,u}^j-A_{\vr ,v}^j$ satisfies
$$
-\Delta(A_{\vr ,u}^j-A_{\vr ,v}^j)=
\vr ^24\pi Q_\vr (x)\big[ (\alpha_ju)\bar{u}-(\alpha_jv)\bar{v} \big].
$$
Hence
\[
\begin{aligned}
\norm{A_{\vr ,u}^j-A_{\vr ,v}^j}_{\mathcal{D}^{1,2}}&\leq \vr ^2C
\jdz{Q}_\infty\big| (\alpha_ju)\bar{u}-(\alpha_jv)\bar{v} \big|_{6/5}\\
 &\leq \vr ^2C\jdz{Q}_\infty\Big(\jdz{u-v}_{12/5}\jdz{u}_{12/5}
 +\jdz{u-v}_{12/5}\jdz{v}_{12/5}\Big)\\
 &\leq \vr ^2C_1\jdz{Q}_\infty\bkt{\norm{u-v}\cdot\norm{u}+\norm{u-v}\cdot\norm{v}},
\end{aligned}
\]
and this implies the desired conclusion.
\end{proof}

\section{Preliminary results}\label{Preliminary results}

Observe that the non-local term $\Gamma_\vr$ is rather complex. The
main purpose of this section is, by cut-off arguments, to introduce
an auxiliary functional which will simplify our proofs.

\subsection{The limit equation}\label{subsec1}

In order to prove our main result, we
will make use of the limit equation. For any $\mu>0$, consider the
equation
$$
i\alpha\cdot\nabla u-a\beta u-\omega u=\mu g(\jdz{u})u.
$$
Its solutions are critical points of the functional
\[
\begin{aligned}
\mathscr{T}_\mu(u)&:=\frac{1}{2}\bkt{\norm{u^+}^2-\norm{u^-}^2
 -\omega\jdz{u}_2^2}-{\mu}\int G(\jdz{u})\\
 &=\frac{1}{2}\bkt{\norm{u^+}^2-\norm{u^-}^2
 -\omega\jdz{u}_2^2}-\mathscr{G}_\mu(u).
\end{aligned}
\]
defined for $u=u^++u^-\in E=E^+\oplus E^-$. Denote the critical
set, the least energy and the set of least energy solutions of
$\mathscr{T}_\mu$ as follows
\[
\aligned
&\mathscr{K}_\mu:=\{u\in E:\ \mathscr{T}_\mu'(u)=0\},\\
&\gamma_\mu:=\inf\{\mathscr{T}_\mu(u):\
 u\in\mathscr{K}_\mu\setminus\{0\}\},\\
&\mathscr{R}_\mu:=\{u\in\mathscr{K}_\mu:\
 \mathscr{T}_\mu(u)=\gamma_\mu,\
 \jdz{u(0)}=\jdz{u}_\infty\}.
\endaligned
\]
The following lemma is from \cite{Ding2} (see also \cite{Ding2008})

\begin{Lem}\label{ding2008}
There hold the following:\\
$i)$ $\mathscr{K}_\mu\not=\emptyset$, $\gamma_\mu>0$
 and $\mathscr{K}_\mu\subset\cap_{q\geq2}W^{1,q}$,\\
$ii)$ $\gamma_\mu$ is attained and $\mathscr{R}_\mu$
 is compact in $H^1(\mathbb{R}^3,\mathbb{C}^4)$,\\
$iii)$ there exist $C,c>0$ such that
$$\jdz{u(x)}\leq C\exp\bkt{-c\jdz{x}}$$
for all $x\in\mathbb{R}^3$ and $u\in\mathscr{R}_\mu$.
\end{Lem}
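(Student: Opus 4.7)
The plan is a standard-in-spirit strongly-indefinite variational argument on $E = E^+ \oplus E^-$. The linking geometry established in Lemma \ref{max Phi-eps<C} applies to $\mathscr{T}_\mu$ (indeed more easily, since the nonlocal $\Gamma_\vr$ term is absent), so the associated minimax value
$$
c_\mu := \inf_{e \in E^+\setminus\{0\}} \sup_{u \in E_e} \mathscr{T}_\mu(u)
$$
is positive, and a generalized linking theorem for indefinite functionals (Bartsch--Ding style) produces a Cerami sequence $\{u_n\}$ at level $c_\mu$. The boundedness scheme of Lemma \ref{PScseq eps estimate} specialized to $\Gamma_\vr \equiv 0$ (combining the Cerami identity $\mathscr{T}_\mu(u_n) - \tfrac12 \mathscr{T}'_\mu(u_n)u_n = \mu\int \widehat{G}(|u_n|)$ with $(g_2)$ and testing against $u_n^+ - u_n^-$) yields $\|u_n\| \leq \Lambda$.

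For existence in i) and attainment in ii), translation invariance of the autonomous limit equation is exploited. A Lions-type concentration alternative rules out full vanishing in $L^q_{\loc}$ for $q \in [2,3)$: under vanishing, \eqref{g estimates} forces $\mathscr{G}_\mu(u_n) \to 0$ and hence $c_\mu = 0$, contradicting positivity. Thus translates $\tilde u_n := u_n(\cdot - y_n) \rightharpoonup u \neq 0$ with $u \in \mathscr{K}_\mu$, and Fatou applied to $\widehat{G}$ gives $\mathscr{T}_\mu(u) \leq c_\mu$. The reverse inequality $\gamma_\mu \geq c_\mu$ is the indefinite analogue of the Nehari characterization: every nontrivial critical point realizes $\sup_{E_e}\mathscr{T}_\mu$ on a suitable half-space $E_e$. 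For compactness of $\mathscr{R}_\mu$, the normalization $|u_n(0)| = |u_n|_\infty$ prevents translational escape, so a subsequence converges weakly to some $u \in \mathscr{R}_\mu$; then $\mathscr{T}_\mu(u_n) \to \gamma_\mu = \mathscr{T}_\mu(u)$ combined with a Brezis--Lieb splitting applied to $\widehat G$ upgrades weak convergence to strong in $H^1$. The regularity $\mathscr{K}_\mu \subset \bigcap_{q \geq 2} W^{1,q}$ is then a bootstrap on $H_0 u = (\omega + \mu g(|u|)) u$: the subcritical bound $|g(|u|)u| \leq C(|u|+|u|^{p-1})$ with $p < 3$ together with $L^q$-theory for $H_0$ iteratively raises the integrability of $\nabla u$ to every exponent.

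The delicate step is iii), which I expect to be the main obstacle. Since $u \in W^{1,q}$ for all $q$, $|u(x)| \to 0$ at infinity. Rather than applying Kato's inequality to $|u|$ — which is problematic for Dirac spinors — one works with $|u|^2$: starting from $H_0 u = (\omega + \mu g(|u|)) u$, applying $H_0$ once more and using $H_0^2 = -\Delta + a^2$, then taking the real $\C^4$-inner product with $u$ and combining with the pointwise identity $\Delta|u|^2 = 2\Re\langle \Delta u, u\rangle + 2|\nabla u|^2$, one arrives at a differential inequality of the form
$$
-\Delta |u|^2 + 2\bigl(a^2 - \omega^2\bigr)|u|^2 \leq \psi(x)\,|u|^2
$$
with $\psi(x) \to 0$ as $|x| \to \infty$ (through $g(|u(x)|) \to 0$). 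Outside a sufficiently large ball the coefficient on the left is strictly positive, so comparison with the radial supersolution $C e^{-2c|x|}$ for any $c < \sqrt{a^2 - \omega^2}$ gives $|u(x)|^2 \leq C e^{-2c|x|}$. Uniformity of $C$ and $c$ across $u \in \mathscr{R}_\mu$ is inherited from the $H^1$-compactness already established in ii), which makes $\psi$ uniformly small outside a fixed ball along the family.
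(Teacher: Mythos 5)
The paper does not prove this lemma; it is cited verbatim from \cite{Ding2} and \cite{Ding2008}. So there is no in-paper proof to compare against. Judged on its own, your reconstruction is essentially correct, and it tracks closely the techniques the present paper deploys later for the non-autonomous problem: the linking geometry and Cerami-boundedness scheme match Lemmas~\ref{max Phi-eps<C} and \ref{PScseq eps estimate} with $\Gamma_\vr\equiv0$; the Nehari-type reduction and the identification $\gamma_\mu=\inf_{\mathscr{M}_\mu}J_\mu=c_\mu$ is exactly the mechanism of Lemma~\ref{d-eps=c-eps}; the $W^{1,q}$-bootstrap is the same iteration as in Lemma~\ref{critical points in W1,q}; and your choice to differentiate $|u|^2$ rather than apply Kato's inequality to $|u|$ is precisely the device the paper uses in Part~3 via the identity \eqref{identity}.

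Two points are worth tightening. First, in the vanishing alternative the conclusion is not literally $c_\mu=0$; rather, vanishing gives $\int G(|u_n|)\to 0$ and $\int g(|u_n|)|u_n||u_n^+-u_n^-|\to 0$, so testing $\mathscr{T}_\mu'(u_n)(u_n^+-u_n^-)\to0$ forces $\|u_n\|\to0$ and hence $\mathscr{T}_\mu(u_n)\to0$, contradicting $\mathscr{T}_\mu(u_n)\to c_\mu>0$. Second, the exponential rate: applying $D$ to $Du=a\beta u+(\omega+\mu g(|u|))u$, using $D^2=-\Delta$, $\alpha_k\beta=-\beta\alpha_k$, and then taking $2\Re\langle\cdot,u\rangle$, one finds that the term $D(\mu g(|u|))u$ contributes nothing because $i\alpha\cdot\nabla(\mu g)$ is anti-Hermitian, so $\Re\langle D(\mu g)u,u\rangle=0$ pointwise; this is the hidden reason the $\Delta|u|^2$ computation is so clean and is worth making explicit. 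The resulting inequality is $\Delta|u|^2\geq 2\bigl(a^2-(\omega+\mu g(|u|))^2\bigr)|u|^2$, so outside a large ball $\Delta|u|^2\geq\lambda|u|^2$ with $\lambda$ close to $2(a^2-\omega^2)$, and comparison with $e^{-\alpha|x|}$ requires $\alpha<\sqrt{\lambda}$, i.e. $|u(x)|\leq Ce^{-c|x|}$ with $c<\sqrt{(a^2-\omega^2)/2}$, not $c<\sqrt{a^2-\omega^2}$ as you wrote. Since the lemma only asserts the existence of some $c>0$, this factor of $\sqrt{2}$ is immaterial to the conclusion, but you should not claim the sharper constant without further work. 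The uniformity of $C,c$ over $\mathscr{R}_\mu$ deserves a sentence: $H^1$-compactness plus the subsolution estimate gives a uniform bound on $\|u\|_\infty$ and uniform smallness of $g(|u(x)|)$ outside a fixed $R$, which is what the comparison argument needs.
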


Motivated by Ackermann \cite{Ackermann} (also see
\cite{Ding2010,Ding2012,Ding2008}), for a fixed $u\in E^+$,
let $\varphi_u:E^-\to\mathbb{R}$
defined by $\varphi_u(v)=\mathscr{T}_\mu(u+v)$. We have, for any
$v,w\in E^-$,
$$
\varphi_u''(v)[w,w]=-\norm{w}^2-\omega\jdz{w}_2^2
 -\mathscr{G}_\mu''(u+v)[w,w]\leq-\norm{w}^2.
$$
In addition
$$
\varphi_u(v)\leq\frac{a+\jdz{\omega}}{2a}\norm{u}^2
 -\frac{a-\jdz{\omega}}{2a}\norm{v}^2.
$$
Therefore, there exists a unique $\mathscr{J}_\mu:E^+\to E^-$ such that
$$
\mathscr{T}_\mu(u+\mathscr{J}_\mu(u))=\max_{v\in E^-}\mathscr{T}_\mu(u+v).
$$
Define
$$
J_\mu: E^+\to\mathbb{R},\ \ J_\mu(u)=\mathscr{T}_\mu(u+\mathscr{J}_\mu(u)),
$$
$$
\mathscr{M}_\mu:=\{u\in E^+\setminus\{0\}:\
 J_\mu'(u)u=0\}.
$$
Plainly, critical points of $J_\mu$ and $\mathscr{T}_\mu$ are in one-to-one
correspondence via the injective map $u\mapsto u+\mathscr{J}_\mu(u)$ from
$E^+$ into $E$. For any $u\in E^+$ and $v\in E^-$, setting $z=v-\mathscr{J}_\mu(u)$ and
$l(t)=\mathscr{T}_\mu(u+\mathscr{J}_\mu(u)+tz)$, one has
$l(1)=\mathscr{T}_\mu(u+v)$, $l(0)=\mathscr{T}_\mu(u+\mathscr{J}_\mu(u))$ and
$l'(0)=0$. Thus $l(1)-l(0)=\int_0^1(1-t)l''(t)dt$. This implies that
\[
\begin{aligned}
&\mathscr{T}_\mu(u+v)-\mathscr{T}_\mu(u+\mathscr{J}_\mu(u))\\
=&\int_0^1(1-t)\mathscr{T}_\mu''\bkt{u+\mathscr{J}_\mu(u)-tz}[z,z]dt\\
=&-\int_0^1(1-t)\bkt{\norm{z}^2+\omega\jdz{z}_2^2}dt
-\int_0^1(1-t)\mathscr{G}_\mu''(u+\mathscr{J}_\mu(u)-tz)[z,z]dt,
\end{aligned}
\]
hence
\begin{equation}\label{the equa 1}
\begin{split}
 &\int_0^1(1-t)\mathscr{G}_\mu''(u+\mathscr{J}_\mu(u)-tz)[z,z]dt \\
 &+\frac{1}{2}\norm{z}^2+\frac{\omega}{2}\jdz{z}_2^2
 =\mathscr{T}_\mu(u+\mathscr{J}_\mu(u))-\mathscr{T}_\mu(u+v).
\end{split}
\end{equation}
It is not difficult to see that, for each $u\in E^+\setminus\{0\}$
there is a unique $t=t(u)>0$ such that $tu\in\mathscr{M}_\mu$ and
$$
\gamma_\mu=\inf\{J_\mu(u):\ u\in\mathscr{M}_\mu\}= \inf_{e\in
E^+\setminus\{0\}}\max_{u\in E_e}\mathscr{T}_\mu(u)
$$
(see \cite{Ding2008}, \cite{Ding2010}). The following lemma is from \cite{Ding2010}.
\begin{Lem}\label{ding2010}
There hold:
\begin{itemize}
\item[$1)$.] Let $u\in\mathscr{M}_\mu$ be such that $J_\mu(u)=\gamma_\mu$
and set $E_u=E^-\oplus\R^+u$. Then
$$
\max_{w\in E_u}\mathscr{T}_\mu(w)=J_\mu(u).
$$
\item[$2)$.] If $\mu_1<\mu_2$, then $\gamma_{\mu_1}>\gamma_{\mu_2}$.
\end{itemize}
\end{Lem}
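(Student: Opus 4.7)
The plan is to leverage the reduction $J_\mu(u)=\mathscr{T}_\mu(u+\mathscr{J}_\mu(u))$ on $E^+$, together with the minimax identity $\gamma_\mu=\inf_{e\in E^+\setminus\{0\}}\max_{w\in E_e}\mathscr{T}_\mu(w)$ and the uniqueness-of-$t(u)$ statement recalled just above the lemma.

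For 1), the defining property of $\mathscr{J}_\mu(su)$ as the unique maximizer of $v\mapsto\mathscr{T}_\mu(su+v)$ on $E^-$ gives $\mathscr{T}_\mu(su+v)\leq J_\mu(su)$ for every $s>0$ and every $v\in E^-$; for $s=0$ one has $\mathscr{T}_\mu(v)\leq 0\leq J_\mu(u)=\gamma_\mu$, using $\|v\|^2\geq a|v|_2^2$ from \eqref{l2ineq}, $\omega\in(-a,a)$, and $G\geq 0$. Hence $\max_{w\in E_u}\mathscr{T}_\mu(w)=\max_{s\geq 0}J_\mu(su)$. Using $(g_1)$-$(g_2)$, $J_\mu(su)>0$ for small $s>0$ and $J_\mu(su)\to-\infty$ as $s\to\infty$; the maximum is attained at some $s_0>0$ satisfying $J_\mu'(s_0u)u=0$, i.e.\ $s_0 u\in\mathscr{M}_\mu$. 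By the recalled uniqueness of such a $t(u)$, $s_0=1$, and so $\max_{E_u}\mathscr{T}_\mu=J_\mu(u)$.

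For 2), let $u_1\in\mathscr{M}_{\mu_1}$ achieve $J_{\mu_1}(u_1)=\gamma_{\mu_1}$, and let $t_1>0$ be the unique scalar with $t_1 u_1\in\mathscr{M}_{\mu_2}$. Set $w^*:=t_1u_1+\mathscr{J}_{\mu_2}(t_1u_1)\in E_{u_1}$. Part 1) applied at $\mu=\mu_1$ yields $\gamma_{\mu_1}\geq\mathscr{T}_{\mu_1}(w^*)$. A direct computation gives
$$
\mathscr{T}_{\mu_1}(w^*)-\mathscr{T}_{\mu_2}(w^*)=(\mu_2-\mu_1)\int G(|w^*|),
$$
which is strictly positive: $w^*\neq 0$ in $E$ because its $E^+$-component $t_1u_1$ is nonzero, and then $(g_2)$ gives $\int G(|w^*|)\geq c_0|w^*|_\sigma^\sigma>0$. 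Finally $\mathscr{T}_{\mu_2}(w^*)=J_{\mu_2}(t_1u_1)\geq\gamma_{\mu_2}$ since $t_1u_1\in\mathscr{M}_{\mu_2}$ and $\gamma_{\mu_2}=\inf_{\mathscr{M}_{\mu_2}}J_{\mu_2}$. Chaining these three inequalities gives $\gamma_{\mu_1}>\gamma_{\mu_2}$.

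The main technical hinge shared by both parts is the claim that $s\mapsto J_\mu'(su)u$ has a unique zero on $(0,\infty)$ at which $s\mapsto J_\mu(su)$ attains a strict global maximum; this would use the strict monotonicity $g'>0$ from $(g_1)$ and the superquadratic control from $(g_2)$. Once this ray-uniqueness is in place (as asserted in the paragraph above the lemma), the two arguments above go through directly.
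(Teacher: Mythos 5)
Your proof is correct. Note that the paper does not give its own argument for this lemma --- it simply cites \cite{Ding2010} --- so there is no in-paper proof to compare against; but your reasoning closes the gap using only facts the paper explicitly records. Part 1) reduces $\max_{E_u}\mathscr{T}_\mu$ to $\max_{s\geq0}J_\mu(su)$ via the defining maximizer property of $\mathscr{J}_\mu$, together with $\mathscr{T}_\mu|_{E^-}\leq 0$ and $\gamma_\mu>0$ (Lemma~\ref{ding2008}); the fiber map has the mountain-pass shape and its interior maximum is a zero of $s\mapsto J_\mu'(su)u$, which by the uniqueness of $t(u)$ asserted just above the lemma must be $s=1$ since $u\in\mathscr{M}_\mu$. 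Part 2) transports a $\gamma_{\mu_1}$-minimizer to $\mathscr{M}_{\mu_2}$ along its ray, compares energies at $w^*$, and uses the strict positivity of $(\mu_2-\mu_1)\int G(|w^*|)$ (via $(g_2)$ and $w^*\neq0$ because its $E^+$-component is $t_1u_1\neq0$) to get strictness. This is exactly the standard Nehari--Pankov/Ackermann-reduction argument one would expect in \cite{Ding2010}; no steps are missing, and the only input you flag as ``assumed'' (uniqueness of $t(u)$ and $J_\mu(su)>0$ for small $s>0$, $J_\mu(su)\to-\infty$) is indeed supplied by the paper's preceding discussion and Lemma~\ref{max Phi-eps<C}.
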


\subsection{Auxiliary functionals}

In order to make the reduction method work for $\Phi_\vr$
as $\vr$ small, we circumvent by cutting off the nonlocal
terms. We find our current framework is more delicate, since
the solutions we look for are at the least energy level and
$\Gamma_\vr$ is not convex (even for $u$ with $\|u\|$ large).
By cutting off  the nonlocal terms, and using the reduction
method, we are able to find a critical point via an
appropriate min-max scheme. The critical point will eventually
be shown to be a least energy solution of the original
equation when $\vr$ is sufficiently small.

By virtue of $(P_0)$, set $\mu=b:=\inf P(x)>0$, take $e_0\in\mathscr{M}_b$ such that
$J_b(e_0)=\gamma_b$, and set $E_{e_0}=E^-\oplus\R^+ e_0$. One
has
\begin{Lem}\label{max Phi-eps < gamma-b}
For all $\vr >0$, \, $\max\limits_{v\in E_{e_0}}\Phi_\vr
(v)\leq\gamma_b$.
\end{Lem}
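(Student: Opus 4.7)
The plan is to compare $\Phi_\vr$ pointwise to the limit functional $\mathscr{T}_b$ and then invoke Lemma 3.2(1).

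First, I would observe that the quadratic part of $\Phi_\vr$ is exactly the quadratic part of $\mathscr{T}_b$: both equal $\tfrac12(\|u^+\|^2-\|u^-\|^2-\omega|u|_2^2)$. So the comparison reduces to estimating the two remaining terms $\Gamma_\vr$ and $\Psi_\vr$ of $\Phi_\vr$ against $\mathscr{G}_b$ of $\mathscr{T}_b$. For $\Gamma_\vr$, Lemma \ref{Gamma-eps nonnegative} gives $\Gamma_\vr(u)\geq 0$ for every $u\in E$ and every $\vr>0$. For $\Psi_\vr$, the hypothesis $(P_0)$ together with $(g_2)$ (which yields $G\geq 0$) gives
\[
\Psi_\vr(u)=\int P_\vr(x)G(|u|)\,dx\geq b\int G(|u|)\,dx=\mathscr{G}_b(u),
\]
since $P_\vr(x)=P(\vr x)\geq \inf P=b$. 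Combining these two estimates,
\[
\Phi_\vr(u)\leq \tfrac{1}{2}\bigl(\|u^+\|^2-\|u^-\|^2-\omega|u|_2^2\bigr)-\mathscr{G}_b(u)=\mathscr{T}_b(u)
\]
for every $u\in E$ and every $\vr>0$.

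Next, I would apply this bound on the slice $E_{e_0}=E^-\oplus\R^+e_0$ and use the choice of $e_0$. Since $e_0\in\mathscr{M}_b$ with $J_b(e_0)=\gamma_b$, part 1) of Lemma \ref{ding2010} gives
\[
\max_{w\in E_{e_0}}\mathscr{T}_b(w)=J_b(e_0)=\gamma_b.
\]
Therefore, for any $v\in E_{e_0}$, $\Phi_\vr(v)\leq \mathscr{T}_b(v)\leq \gamma_b$, whence $\max_{v\in E_{e_0}}\Phi_\vr(v)\leq \gamma_b$.

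The argument has no real obstacle: the only non-trivial input is the nonnegativity of $\Gamma_\vr$, which is already established (relying on S\'er\'e's inequality \eqref{sere ineq}). The role of this lemma is to uniformly cap the min-max level of $\Phi_\vr$ independently of $\vr$, which will later be combined with the uniform bound on $(C)_c$-sequences from Lemma \ref{PScseq eps estimate} to carry out the cut-off reduction scheme.
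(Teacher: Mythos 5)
Your proof is correct and takes essentially the same route as the paper: establish the pointwise bound $\Phi_\vr(u)\leq\mathscr{T}_b(u)$ (from $\Gamma_\vr\geq0$ and $P_\vr\geq b$) and then invoke Lemma \ref{ding2010}, part 1), to conclude $\max_{E_{e_0}}\mathscr{T}_b=\gamma_b$. You have merely spelled out the details that the paper compresses into ``It is clear that $\Phi_\vr(u)\leq\mathscr{T}_b(u)$.''
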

\begin{proof}
It is clear that $\Phi_\vr (u)\leq \mathscr{T}_b(u)$ for all
$u\in E$, hence, by Lemma \ref{ding2010}
$$
\max_{v\in E_{e_0}}\Phi_\vr (v)\leq\max_{v\in
E_{e_0}}\mathscr{T}_b(v)=J_b(e_0)=\gamma_b
$$
as claimed.
\end{proof}

To introduce the modified functional, by virtue of Lemma \ref{PScseq
eps estimate}, for $\lm=\ga_{b}$  and  $I=[0, \ga_{b}]$, let
$\Lambda\geq 1$ be the associated constant (independent of $\vr$).
Denote $T:=(\Lambda+1)^2$ and let $\eta:[0,\infty)\to[0,1]$ be a
smooth function with $\eta(t)=1$ if $0\leq t\leq T$, $\eta(t)=0$ if
$t\geq T+1$, $\max\jdz{\eta'(t)}\leq c_1$ and
$\max\jdz{\eta''(t)}\leq c_2$. Define
\[
\begin{aligned}
\widetilde{\Phi}_\vr
(u)&=\frac{1}{2}\bkt{\norm{u^+}^2-\norm{u^-}^2-\omega\jdz{u}_2^2}-\eta(\norm{u}^2)\Gamma_\vr
(u)
-\Psi_\vr (u)\\
 &=\frac{1}{2}\bkt{\norm{u^+}^2-\norm{u^-}^2-\omega\jdz{u}_2^2}-\mathscr{F}_\vr (u)-\Psi_\vr (u).
\end{aligned}
\]
By definition, $\Phi_\vr|_{B_T}=\widetilde{\Phi}_\vr|_{B_T}$. It is
easy to see that $0\leq {\mathscr{F}_\vr (u)}\leq {\Gamma_\vr (u)}$
and
$$
\jdz{\mathscr{F}_\vr '(u)v}\leq\jdz{2\eta'(\norm{u}^2)\Gamma_\vr
(u)\inp{u}{v}} +\jdz{\Gamma_\vr '(u)v}
$$
for $u,v\in E$.

\medskip

\begin{Lem}\label{lemR1}
There exists $\vr_1>0$ such that, for any $\vr\leq \vr_1$, if
$\{u^\vr_n\}$ is a $(C)_c$ sequence of $\widetilde{\Phi}_\vr$ with
$c\in I$ then $\|u^\vr_n\|\leq \Lambda+\frac12$, and consequently
$\widetilde{\Phi}_\vr(u^\vr_n)=\Phi_\vr(u^\vr_n)$.
\end{Lem}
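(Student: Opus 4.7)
The plan is to mimic the boundedness argument of Lemma~\ref{PScseq eps estimate} for $\widetilde{\Phi}_\vr$ in place of $\Phi_\vr$, tracking the extra contributions coming from the cut-off and showing that they are all of order $O(\vr^{2})$. Since $\Gamma_\vr$ is $4$-homogeneous, $\Gamma_\vr'(u)u=4\Gamma_\vr(u)$, so a direct calculation gives
\[
\widetilde{\Phi}_\vr(u)-\tfrac{1}{2}\widetilde{\Phi}_\vr'(u)u
=\bigl[\eta(\|u\|^{2})+\eta'(\|u\|^{2})\|u\|^{2}\bigr]\Gamma_\vr(u)+\int P_\vr\widehat{G}(|u|)\,dx.
\]
The bracketed factor equals $1$ on $[0,T]$, vanishes on $[T+1,\infty)$ and is uniformly bounded on the transition interval; together with the a priori bound $\Gamma_\vr(u)\leq \vr^{2}C_1|Q|_\infty^{2}\|u\|^{4}$ from \eqref{estimates of Gamma-eps0} and the observation that $\|u\|^{2}\leq T+1$ whenever this factor is nonzero, the whole first summand is uniformly bounded by $K_0\vr^{2}$. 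Exactly as in \eqref{R3}, the Cerami condition then yields $|u_n^\vr|_\sigma\leq C_1$ for all $\vr\in(0,1]$ and $n$ large.

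Next, I would test $\widetilde{\Phi}_\vr'(u_n^\vr)$ against $u_n^{\vr+}-u_n^{\vr-}$ and follow \eqref{R4}--\eqref{R5} verbatim, with $\mathscr{F}_\vr'$ in place of $\Gamma_\vr'$. This produces
\[
\tfrac{a-|\omega|}{2a}\|u_n^\vr\|^{2}\leq|\mathscr{F}_\vr'(u_n^\vr)(u_n^{\vr+}-u_n^{\vr-})|+\int_{|u_n^\vr|\geq r_1}P_\vr g(|u_n^\vr|)|u_n^\vr||u_n^{\vr+}-u_n^{\vr-}|+o(1).
\]
The last integral is dominated by $C_2\|u_n^\vr\|$ exactly as in \eqref{R6}. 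For the cut-off term, the identity
\[
\mathscr{F}_\vr'(u)v=2\eta'(\|u\|^{2})\langle u,v\rangle\Gamma_\vr(u)+\eta(\|u\|^{2})\Gamma_\vr'(u)v,
\]
applied with $v=u_n^{\vr+}-u_n^{\vr-}$ and using $\langle u_n^\vr,v\rangle=\|u_n^\vr\|^{2}$, shows that both summands vanish off $\{\|u\|^{2}\leq T+1\}$; on that support $\|u_n^\vr\|$ is automatically bounded by $\sqrt{T+1}$, so \eqref{estimates of Gamma-eps0} and \eqref{estimates of Gamma-eps1} immediately give $|\mathscr{F}_\vr'(u_n^\vr)(u_n^{\vr+}-u_n^{\vr-})|\leq K_1\vr^{2}$ uniformly in $\vr\leq 1$.

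Combining the two estimates yields
\[
\|u_n^\vr\|^{2}\leq M_1\|u_n^\vr\|+K\vr^{2}+o(1),
\]
where $M_1$ is exactly the linear coefficient appearing in \eqref{x1}. Inserting $r=M_1$ into \eqref{x1} immediately gives $M_1\leq\Lambda$, and completing the square in the displayed inequality produces
\[
\|u_n^\vr\|\leq \frac{M_1}{2}+\sqrt{\frac{M_1^{2}}{4}+K\vr^{2}+o(1)},
\]
whose right-hand side tends to $M_1\leq\Lambda$ as $\vr\to 0$ and $n\to\infty$. Choosing $\vr_1>0$ small enough (and passing to a subsequence in $n$) therefore gives $\|u_n^\vr\|\leq\Lambda+\tfrac12$; this forces $\|u_n^\vr\|^{2}<T$, so $\eta(\|u_n^\vr\|^{2})=1$ and $\widetilde{\Phi}_\vr(u_n^\vr)=\Phi_\vr(u_n^\vr)$. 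The main technical obstacle, as flagged by the authors before this lemma, is that $\Gamma_\vr$ is neither convex nor sign-definite in a way that would directly control a Cerami sequence of $\Phi_\vr$; the cut-off resolves this at the price of the $O(\vr^{2})$ remainder, which is harmless for $\vr$ small.
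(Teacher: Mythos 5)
Your proposal follows the same strategy as the paper's proof: repeat the boundedness argument of Lemma~\ref{PScseq eps estimate} for $\widetilde{\Phi}_\vr$ and show that every extra contribution from the cut-off is $O(\vr^2)$ because $\eta$, $\eta'$ are supported in $\{\|u\|^2\leq T+1\}$ and $\Gamma_\vr(u)\leq\vr^2C_1|Q|_\infty^2\|u\|^4$; the paper writes the final inequality as $\|u^\vr_n\|^2\leq\vr^2d_4+M_1\|u^\vr_n\|+M_2\|u^\vr_n\|^{1+\zeta}$ whereas you absorb the $\eta\Gamma'_\vr$ contribution into the $O(\vr^2)$ remainder using the cut-off support and get a pure quadratic in $\|u^\vr_n\|$, but the conclusions are the same. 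One small slip: with $v=u^{\vr+}_n-u^{\vr-}_n$ one has $\langle u^\vr_n,v\rangle=\|u^{\vr+}_n\|^2-\|u^{\vr-}_n\|^2$, not $\|u^\vr_n\|^2$, by the orthogonality of the decomposition \eqref{Edec}; this is harmless, since only $|\langle u^\vr_n,v\rangle|\leq\|u^\vr_n\|\,\|v\|=\|u^\vr_n\|^2$ is needed for your bound. The phrase \emph{``inserting $r=M_1$ into \eqref{x1} gives $M_1\leq\Lambda$''} would be clearer if spelled out as: $r=M_1$ satisfies $r^2\leq M_1r+M_2r^{1+\zeta}$, so, taking $\Lambda$ to be the largest value consistent with that inequality (which is how $\Lambda$ is determined in the proof of Lemma~\ref{PScseq eps estimate}), $M_1\leq\Lambda$.
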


\begin{proof}
We repeat the arguments of Lemma \ref{PScseq
eps estimate}. Let $\{u^\vr_n\}$ be a $(C)_c$-sequence  of
$\widetilde{\Phi}_\vr $ with $c\in I$. If $\|u^\vr_n\|^2\geq T+1$
then $\widetilde{\Phi}_\vr(u^\vr_n)=\Phi_\vr(u^\vr_n)$ so, by Lemma
\ref{PScseq eps estimate}, one has $\|u^\vr_n\|\leq \Lambda$, a
contradiction. Thus we assume that $\|u^\vr_n\|^2\leq T+1$. Then,
using \eqref{estimates of Gamma-eps0},  $ |\eta'(\|u^\vr_n\|^2)
\|u^\vr_n\|^2 \Gamma_\vr(u^\vr_n)| \leq \vr^2 d_1 $ (here and in the
following, by $d_j$ we denote positive constants independent of
$\vr$). Similarly to \eqref{R3},
\[
2\ga_b>c+o(1)\geq
\big(\eta(\|u^\vr_n\|^2)+2\eta'(\|u^\vr_n\|^2)\|u^\vr_n\|^2\big)\Gamma_\vr(u^\vr_n)+
\int P_\vr(x)\widehat{G}(|u^\vr_n|)
\]
which yields
%\begin{equation}\label{R9}
\[
2\ga_b+\vr^2d_1> \eta(\|u^\vr_n\|^2)\Gamma_\vr(u^\vr_n)+ \int
P_\vr(x)\widehat{G}(|u^\vr_n|),
\]
%\end{equation}
consequently $|u^\vr_n|_\sigma\leq d_2$. Similarly to \eqref{R5} we
get that
%\begin{equation}\label{R5}
\[
\aligned
%&\, o(1)+
\frac{a-\jdz{\omega}}{2a}\|u^\vr_n\|^2 \leq &\,\vr^2d_3 +
%2\,\Gamma_\vr(u^\vr_n)\eta'(\|u^\vr_n\|^2)\big(\|u_n^{\vr+}\|^2
 %-\|u_n^{\vr-}\|^2\big)
 %+
 \eta(\|u^\vr_n\|^2)\Gamma'_\vr(u_n^\vr)(u_n^{\vr+}-u_n^{\vr-})\\
 &\,+\Re\int_{|u^\vr_n|\geq r_1} P_\vr (x)
 g(\jdz{u_n^\vr })u_n^\vr \cdot\overline{u_n^{\vr +}-u_n^{\vr -}}
\endaligned
\]
%\end{equation}
which, together with \eqref{R6} and \eqref{R7}, implies either
$\|u^\vr_n\|\leq 1$ or as \eqref{x1}
\[
\|u^\vr_n\|^2\leq \vr^2 d_4+M_1\|u^\vr_n\|+M_2\|u^\vr_n\|^{1+\zeta},
\]
thus
\[
\|u^\vr_n\|\leq \vr^2d_5+\Lambda.
\]
The proof is complete.
\end{proof}

Based on this lemma, to prove Theorem \ref{main theorem} it suffices
to study $\widetilde{\Phi}_\vr $ and get its critical points with
critical values in $[0,\ga_{b}]$. This will be done via a series of
arguments. The first is to introduce the minimax values of
$\widetilde{\Phi}_\vr $. It is easy to verify the following lemma.

\begin{Lem}\label{tilde Phi-eps linking}
$\widetilde{\Phi}_\vr $ possesses a linking structure and
we can replace $\Phi_\vr$ by $\widetilde{\Phi}_\vr$
in Lemma \ref{max Phi-eps<C}. In addition,
$$
\max_{v\in E_{e_0}}\widetilde{\Phi}_\vr (v)\leq\ga_b,
$$
where $e_0\in\mathscr{M}_b$ such that $J_b(e_0)=\ga_b$ and
$E_{e_0}=E^-\oplus\R^+ e_0$
\end{Lem}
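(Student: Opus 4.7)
The plan is to reduce the entire lemma to two elementary pointwise sandwich inequalities between $\widetilde{\Phi}_\vr$, $\Phi_\vr$ and $\mathscr{T}_b$, after which everything follows from Lemmas \ref{max Phi-eps<C}, \ref{Gamma-eps nonnegative} and \ref{ding2010}. Since $\eta$ takes values in $[0,1]$ and $\Gamma_\vr(u)\geq 0$ by Lemma \ref{Gamma-eps nonnegative}, one has
\[
0\leq\mathscr{F}_\vr(u)=\eta(\norm{u}^2)\Gamma_\vr(u)\leq\Gamma_\vr(u)
\]
for every $u\in E$, and therefore $\widetilde{\Phi}_\vr(u)\geq\Phi_\vr(u)$ for all $u\in E$. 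Moreover, since $P_\vr(x)=P(\vr x)\geq b=\inf P$ and $G(s)\geq 0$ for all $s\geq 0$ by $(g_1)$, one also has $\Psi_\vr(u)\geq b\int G(\jdz{u})$, and combined with $\mathscr{F}_\vr\geq 0$ this yields $\widetilde{\Phi}_\vr(u)\leq\mathscr{T}_b(u)$ for every $u\in E$.

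The first sandwich gives item 1) at once: $\widetilde{\Phi}_\vr|_{B_r^+}\geq\Phi_\vr|_{B_r^+}\geq 0$ and $\widetilde{\Phi}_\vr|_{S_r^+}\geq\Phi_\vr|_{S_r^+}\geq\tau$ with the same $r,\tau>0$ as in Lemma \ref{max Phi-eps<C} 1), which are independent of $\vr$. For item 2), fix $e\in E^+\setminus\{0\}$ and write $u=se+v\in E_e$. Since $\mathscr{F}_\vr\geq 0$ can be discarded in the estimate just as $\Gamma_\vr$ was discarded in the derivation of \eqref{linking ineq}, the very same chain of inequalities gives
\[
\widetilde{\Phi}_\vr(u)\leq\frac{1}{2}s^2\norm{e}^2-\frac{1}{2}\norm{v}^2-c_\de d_\theta\inf P\cdot s^\theta\jdz{e}_\theta^\theta,
\]
so that $\widetilde{\Phi}_\vr(u)<0$ on $E_e\setminus B_{R_e}$ and $\max\widetilde{\Phi}_\vr(E_e)\leq C_e$ with $R_e,C_e$ independent of $\vr$, exactly as in the original proof.

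The second sandwich handles the final assertion. Using $\widetilde{\Phi}_\vr(v)\leq\mathscr{T}_b(v)$ pointwise together with the choice of $e_0\in\mathscr{M}_b$ satisfying $J_b(e_0)=\gamma_b$, one obtains
\[
\max_{v\in E_{e_0}}\widetilde{\Phi}_\vr(v)\leq\max_{v\in E_{e_0}}\mathscr{T}_b(v)=J_b(e_0)=\gamma_b,
\]
where the middle equality is Lemma \ref{ding2010} 1). No substantive obstacle is expected here; the entire lemma is a corollary of the pointwise bounds $0\leq\mathscr{F}_\vr\leq\Gamma_\vr$ and $P_\vr\geq b$, and the cutoff $\eta$ has been designed so that the geometric information already established for $\Phi_\vr$ (in Lemma \ref{max Phi-eps<C}) and for the limit functional $\mathscr{T}_b$ (in Lemma \ref{ding2010}) is transferred to $\widetilde{\Phi}_\vr$ uniformly in $\vr\in(0,1]$.
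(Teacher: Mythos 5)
Your proof is correct and follows essentially the same route as the paper, which states only that one "can follow the proofs of Lemmas \ref{max Phi-eps<C} and \ref{max Phi-eps < gamma-b} with minor changes"; your sandwich $0\le\mathscr{F}_\vr\le\Gamma_\vr$ (hence $\Phi_\vr\le\widetilde{\Phi}_\vr\le\mathscr{T}_b$) makes those minor changes explicit, even streamlining item 1) by citing the already-established lower bound for $\Phi_\vr$ rather than re-deriving the estimate with $\mathscr{F}_\vr$ in place of $\Gamma_\vr$.
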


\begin{proof}
One can follow the proofs of Lemmas \ref{max Phi-eps<C}
and Lemma \ref{max Phi-eps < gamma-b} with minor changes.
\end{proof}

Define (see \cite{Ding1,Szulkin})
$$
c_\vr :=\inf_{e\in E^+\setminus\{0\}}\max_{u\in E_e}\widetilde{\Phi}_\vr (u).
$$
As a consequence of Lemma \ref{tilde Phi-eps linking} we have
\begin{Lem}\label{C independent of eps}
$\tau\leq c_\vr \leq\gamma_b$.
\end{Lem}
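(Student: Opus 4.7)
The proof is short and follows directly from the preparatory lemmas; the two inequalities are handled separately.

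\medskip

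\textbf{Upper bound $c_\vr\leq\gamma_b$.} The plan is to test the infimum defining $c_\vr$ against the distinguished direction $e_0\in\mathscr{M}_b\subset E^+\setminus\{0\}$ chosen just before Lemma~\ref{max Phi-eps < gamma-b}. Since $E_{e_0}=E^-\oplus\R^+e_0$ is one of the admissible subspaces in the min-max, the last assertion of Lemma~\ref{tilde Phi-eps linking} gives
\[
c_\vr\leq\max_{u\in E_{e_0}}\widetilde{\Phi}_\vr(u)\leq\gamma_b,
\]
which is the desired upper estimate.

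\medskip

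\textbf{Lower bound $\tau\leq c_\vr$.} Here I rely on the linking geometry of $\widetilde{\Phi}_\vr$ granted by Lemma~\ref{tilde Phi-eps linking} (the transfer of part~1) of Lemma~\ref{max Phi-eps<C} to $\widetilde{\Phi}_\vr$), which provides an $r>0$, independent of $\vr$, with $\widetilde{\Phi}_\vr|_{S_r^+}\geq\tau$. The key observation is that for an arbitrary $e\in E^+\setminus\{0\}$, the ray $\R^+e$ lies inside $E_e=E^-\oplus\R^+e$ (take $v=0\in E^-$), and it meets the sphere $S_r^+$ at the single point $s_0e$ with $s_0:=r/\|e\|$. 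Therefore
\[
\max_{u\in E_e}\widetilde{\Phi}_\vr(u)\geq\widetilde{\Phi}_\vr(s_0 e)\geq\tau,
\]
and taking the infimum over $e\in E^+\setminus\{0\}$ yields $c_\vr\geq\tau$.

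\medskip

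Neither step poses any serious obstacle: both bounds are immediate consequences of the linking structure of $\widetilde{\Phi}_\vr$ (Lemma~\ref{tilde Phi-eps linking}) combined with the comparison $\widetilde{\Phi}_\vr\leq\mathscr{T}_b$ that was already used to produce $\gamma_b$ as a ceiling on $E_{e_0}$. The only point that deserves care, and the reason it is worth writing out, is the $\vr$-independence: both $r,\tau$ and $\gamma_b$ do not depend on $\vr$, so the resulting bracket $[\tau,\gamma_b]$ is a fixed interval containing $c_\vr$ for every $\vr>0$, which is what subsequent arguments (in particular the application of Lemma~\ref{PScseq eps estimate} with $\lambda=\gamma_b$ and the cut-off threshold in Lemma~\ref{lemR1}) rely on.
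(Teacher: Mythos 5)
Your proof is correct and is exactly the argument the paper intends: the paper states the lemma as an immediate consequence of Lemma~\ref{tilde Phi-eps linking} without writing it out, and your two steps (testing the infimum against $e_0$ for the upper bound, and intersecting $\R^+e\subset E_e$ with $S_r^+$ to get $\max_{E_e}\widetilde{\Phi}_\vr\geq\tau$ for every admissible $e$, hence the lower bound) are precisely the details being elided.
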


%Combining the definition of $\widetilde{\Phi}_\vr$ we are nearly
%ready to prove the main theorem. It remains to investigate the
%second derivative of $\widetilde{\Phi}_\vr$ for $\vr$ small.

We now describe further the minimax value $c_\vr$. As before, for a
fixed $u\in E^+$ we define $\phi_u:E^-\to\mathbb{R}$ by
$$\phi_u(v)=\widetilde{\Phi}_\vr (u+v).$$
A direct computation gives, for any $v,z\in E^-$,
\[
\begin{aligned}
\phi_u''(v)[z,z]&=-\norm{z}^2-\omega\jdz{z}_2^2
-\mathscr{F}_\vr'' (u+v)[z,z]-\Psi_\vr'' (u+v)[z,z],\\
 &\leq-\frac{(a-\jdz{\omega})}{a}\norm{z}^2-\mathscr{F}_\vr'' (u+v)[z,z],
\end{aligned}
\]
and
\[
\begin{aligned}
\mathscr{F}&_\vr'' (u+v)[z,z]\\
=&\bkt{4\eta''(\norm{u+v}^2)\jdz{\inp{u+v}{z}}^2
+2\eta'(\norm{u+v}^2)\norm{z}^2}\Gamma_\vr (u+v)\\
&+4\eta'(\norm{u+v}^2)\inp{u+v}{z}\Gamma_\vr '(u+v)z\\
&+\eta(\norm{u+v}^2)\Gamma_\vr ''(u+v)[z,z].
\end{aligned}
\]
Combining \eqref{estimates of Gamma-eps0}-\eqref{estimates of
Gamma-eps2} yields that there exists $\vr _0\in(0,\vr _1]$ such that
$$
\phi_u''(v)[z,z]\leq-\frac{a-\jdz{\omega}}{2a}\norm{z}^2 \ \ \ \
\mathrm{if}\ 0<\vr \leq\vr _0.
$$
Since
$$
\phi_u(v)\leq\frac{a+\jdz{\omega}}{2a}\norm{u}^2-\frac{a-\jdz{\omega}}{2a}\norm{v}^2,
$$
there is $h_\vr :E^+\to E^-$, uniquely defined, such that
$$
\phi_u(h_\vr (u))=\max_{v\in E^-}\phi_u(v)
$$
and
$$
v\not=h_\vr (u)\Leftrightarrow\widetilde{\Phi}_\vr (u+v)
<\widetilde{\Phi}_\vr (u+h_\vr (u)).
$$
It is clear that, for all $v\in E^-$,
$
0=\phi_u'(h_\vr (u))v
$.
Observe that, similarly to (\ref{the equa 1}), we have for $u\in E^+$
and $v\in E^-$
\begin{equation}\label{the equa 2}
\begin{split}
&\widetilde{\Phi}_\vr (u+h_\vr (u))-\widetilde{\Phi}_\vr (u+v)\\
 =&\int_0^1(1-t)\Big[\mathscr{F}_\vr ''(u+h_\vr (u)
 +t(v-h_\vr (u)))[v-h_\vr (u),v-h_\vr (u)]\\
 &+\Psi_\vr ''(u+h_\vr (u)+t(v-h_\vr (u)))
 [v-h_\vr (u),v-h_\vr (u)]\Big]dt\\
 &+\frac{1}{2}\norm{v-h_\vr (u)}^2+\frac{\omega}{2}\jdz{v-h_\vr (u)}_2^2.
\end{split}
\end{equation}

Define $I_\vr :E^+\to\mathbb{R}$ by
$$
I_\vr (u)=\widetilde{\Phi}_\vr (u+h_\vr (u)),
$$
and set
$$
\mathscr{N}_\vr :=\{u\in E^+\setminus\{0\}:\ I_\vr '(u)u=0\}.
$$

\begin{Lem}\label{unique t}
For any $u\in E^+\setminus\{0\}$, there is a unique $t=t(u)>0$ such
that $tu\in\mathscr{N}_\vr $.
\end{Lem}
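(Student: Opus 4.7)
Fix $u \in E^+\setminus\{0\}$ and set $f(t) := I_\vr(tu) = \widetilde{\Phi}_\vr(tu + h_\vr(tu))$ for $t \geq 0$. Since $I_\vr'(tu)(tu) = t\,f'(t)$, the claim is equivalent to showing that $f$ has a unique positive critical point. The strategy is the classical one for Nehari-type arguments: first collect the qualitative shape of $f$ to get existence, then show that any positive critical point must be a strict local maximum, which forces uniqueness.

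For the qualitative behaviour I would note three things. (i) $f(0) = 0$: for every $v \in E^-$ one has $\widetilde{\Phi}_\vr(v) \leq -\tfrac{a-|\omega|}{2a}\|v\|^2 \leq 0$ because $\mathscr{F}_\vr(v), \Psi_\vr(v) \geq 0$, so the unique maximiser $h_\vr(0)$ equals $0$. (ii) $f(t) > 0$ for some small $t$: taking $\bar t = r/\|u\|$ with $r$ from Lemma \ref{tilde Phi-eps linking}, one gets $f(\bar t) \geq \widetilde{\Phi}_\vr(\bar t\, u) \geq \tau > 0$. (iii) $f(t) \to -\infty$ as $t \to \infty$: applying the estimate from part 2) of (the $\widetilde{\Phi}_\vr$-analogue of) Lemma \ref{max Phi-eps<C} with $e = u/\|u\|$ and $s = t\|u\|$ yields, uniformly in $v \in E^-$,
\[
\widetilde{\Phi}_\vr(tu+v) \leq \tfrac{t^2}{2}\|u\|^2 - c_\delta d_\theta (\inf P)\,t^\theta |u|_\theta^\theta,
\]
and $\theta > 2$ forces $f(t) = \sup_{v} \widetilde{\Phi}_\vr(tu+v) \to -\infty$. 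Together (i)--(iii) give a maximum of $f$ at some $t_0 > 0$, hence $t_0 u \in \mathscr{N}_\vr$.

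For uniqueness I plan to show that any critical point $t_0 > 0$ of $f$ satisfies $f''(t_0) < 0$. By the envelope property of $h_\vr$ (namely $\widetilde{\Phi}_\vr'(w(t))\xi = 0$ for all $\xi \in E^-$, where $w(t) := tu + h_\vr(tu)$), the first derivative reduces to $f'(t) = \widetilde{\Phi}_\vr'(w(t))u$, and differentiating in $t$ one gets $f''(t) = \widetilde{\Phi}_\vr''(w(t))[u+h_\vr'(tu)u,\,u]$; differentiating the identity $\widetilde{\Phi}_\vr'(w(t))\xi=0$ in $t$ further expresses $h_\vr'(tu)u \in E^-$ through the inverse of the restricted Hessian, which is \emph{uniformly} negative definite for $\vr \leq \vr_0$ by the computation preceding this lemma. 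One then splits the resulting bilinear form into the strictly concave part coming from $-\tfrac{1}{2}\|\cdot\|^2 -\tfrac{\omega}{2}|\cdot|_2^2 - \Psi_\vr''$ (strictly concave at $w(t_0) \neq 0$ along the direction $u$, since $g'(s) > 0$ and $\theta > 2$ in $(g_1)$--$(g_2)$) and the indefinite non-local contribution from $\eta(\|\cdot\|^2)\Gamma_\vr$; the latter is $O(\vr^2)$ thanks to the cut-off and to the estimates \eqref{estimates of Gamma-eps0}--\eqref{estimates of Gamma-eps2}, hence absorbed by the concave terms for $\vr \leq \vr_0$. Once $f''(t_0) < 0$ at every positive critical point is known, a standard topological argument closes the proof: two distinct critical points would both be strict local maxima of $f$, forcing an interior local minimum between them, which is itself a critical point with $f'' \geq 0$, contradicting the previous step.

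\textbf{Main obstacle.} The delicate point is the non-local term: $\Gamma_\vr''$ is indefinite in general, so one cannot use naive convexity in $E^+$-directions. The reason the argument still goes through is precisely the reduction to the auxiliary functional $\widetilde{\Phi}_\vr$ introduced in Section \ref{Preliminary results} and the smallness of $\vr$; the cut-off $\eta$ keeps the Hessian contribution of the non-local piece uniformly small, so the strict concavity inherited from $(g_1)$--$(g_2)$ prevails. This is the same mechanism that made the reduction map $h_\vr$ well defined for $\vr \leq \vr_0$, and it is what allows the standard Nehari-manifold monotonicity to survive in the Maxwell--Dirac setting.
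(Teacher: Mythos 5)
Your overall structure matches the paper's: set $f(t)=I_\vr(tu)$, verify $f(0)=0$, $f>0$ for small $t>0$, $f(t)\to-\infty$ (all three via the linking estimates of Lemma~\ref{tilde Phi-eps linking}), then argue that every positive critical point of $f$ is a strict local maximum, which forces uniqueness. The paper does exactly this, quoting Ackermann's reduction framework for the step $I_\vr''(z)[z,z]<0$ at Nehari points, after first verifying an Ackermann-type ``superposition'' inequality for $\Ga_\vr$ (and hence $\mathscr{F}_\vr$) of the form
\[
\big(\mathscr{F}_\vr''(u)[u,u]-\mathscr{F}_\vr'(u)u\big)+2\big(\mathscr{F}_\vr''(u)[u,v]-\mathscr{F}_\vr'(u)v\big)+\mathscr{F}_\vr''(u)[v,v]\ \geq\ o(\vr)\|v\|^2+o(\vr).
\]

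The one place where your sketch is imprecise is the identification of the ``strictly concave part.'' You write that the bilinear form splits into the concave piece $-\tfrac12\|\cdot\|^2-\tfrac\omega2|\cdot|_2^2-\Psi_\vr''$ and an $O(\vr^2)$ non-local remainder. But along the direction $u\in E^+$ (and its reduced correction $u+h_\vr'(t_0u)u$), the quadratic part of $\widetilde{\Phi}_\vr$ contributes $+\tfrac12\|u\|^2-\tfrac\omega2|u|_2^2>0$, which is \emph{not} concave. The sign of $f''(t_0)$ does not follow from a pointwise concave-plus-small decomposition; it comes from combining the Nehari identity $I_\vr'(t_0u)(t_0u)=0$ with the strict superquadraticity of $G$ (i.e.\ $g'(s)>0$ and $\theta>2$), which converts the positive quadratic contribution into something dominated by $-\int P_\vr g'(|w|)|w|^2|\cdot|^2$-type terms. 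This is precisely the content of the ``delicate calculation'' the paper attributes to \cite{Ackermann,Ding2008}, and it is why the paper first proves the displayed superposition inequality for $\mathscr{F}_\vr$: that inequality is the hypothesis under which Ackermann's framework delivers $I_\vr''(z)[z,z]<0$ once $I_\vr'(z)z=0$. So your plan should replace the ``split into a concave part and a small part'' heuristic by an explicit use of the critical-point relation, or establish the superposition inequality for $\mathscr{F}_\vr$ and invoke that abstract result, as the paper does.
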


\begin{proof}
This proof is quite technical, for details
we refer \cite{Ackermann,Ding2008}.
We only give a sketch of the proof. Firstly,
we observe that for any $u\in E\setminus\{0\}$ and $v\in E$,
\[
\aligned
&\big( \Ga_\vr''(u)[u,u]-\Ga_\vr'(u)u \big)
+2\big( \Ga_\vr''(u)[u,v]-\Ga_\vr'(u)v \big) +\Ga_\vr''(u)[v,v]  \\
=&\, 2\,\vr^2 \iint\frac{Q_\vr(x)Q_\vr(y)}{|x-y|}
\Big[ J_0(x)[\al_0(u+v)\ov{(u+v)}](y)   \\
&\, -\sum_{k=1}^3 J_k(x)[\al_k(u+v)\ov{(u+v)}](y) \Big] dxdy   \\
&\, +2\,\vr^2\iint\frac{Q_\vr(x)Q_\vr(y)}{|x-y|}\Big[\big(\Re[\al_0u\ov
v(x)]\big)\big(\Re[\al_0u\ov v(y)]\big)   \\
&\,\qquad -\sum^3_{k=1}\big(\Re[\al_ku\ov
v(x)]\big)\big(\Re[\al_ku\ov v(y)]\big) \Big]dxdy  \\
\geq&\, O(\vr^2)\|u\|^2\|v\|^2.
\endaligned
\]
Here we used the formula
\[
\aligned
&\pm(\bt z, z)(y)(\bt u, u)(x) +
\sum_{k=1}^3(\bt z, \pi_k z)(y)(\bt u, \pi_k u)(x)   \\
=&\,\pm(\bt z, z)(y)(\bt u, u)(x) +
\sum_{k=1}^3(\al_k z, z)(y)(\al_k u, u)(x)   \\
\leq&\, |z(y)|_{\C^4}^2|u(x)|_{\C^4}^2
\endaligned
\]
which follows from \eqref{XX1} and \eqref{XX2} with $z=u+v\in E$.
Consequently, we deduce that
\[
\aligned
&\big( \mathscr{F}_\vr''(u)[u,u]-\mathscr{F}_\vr'(u)u \big)
+2\big( \mathscr{F}_\vr''(u)[u,v]-\mathscr{F}_\vr'(u)v \big)  \\
&+\mathscr{F}_\vr''(u)[v,v]
\geq  o(\vr)\|v\|^2 + o(\vr) \, .
\endaligned
\]
Invoking the arguments in \cite{Ackermann},
if $z\in E^+\setminus\{0\}$ with
$I_\vr'(z)z=0$, we see by a delicate calculation that,
for $\vr$ sufficiently small,
\begin{equation}\label{XX4}
I_\vr''(z)[z,z]<0.
\end{equation}

Now for a fixed $u\in E^+\setminus\{0\}$,
we set $f(t)=I_\vr(tu)$. From Lemma \ref{tilde Phi-eps linking},
we see that $f(0)=0$, $f(t)>0$ for $t>0$ sufficiently small,
and $f(t)\to-\infty$ as $t\to\infty$.
Thus there exists $t(u)>0$ such that
\[
I_\vr(t(u)u)=\sup_{t\geq0}I_\vr(tu).
\]
It is clear that
\[
\frac{d\, I_\vr(tu)}{dt}\bigg|_{t=t(u)}=
I_\vr'(t(u)u)u=\frac{1}{t(u)}I_\vr'(t(u)u)t(u)u=0,
\]
and consequently by \eqref{XX4}
\[
I_\vr''(t(u)u)[t(u)u, t(u)u]<0.
\]
Therefore, one sees that such $t(u)>0$ is unique.
\end{proof}

\begin{Lem}\label{d-eps=c-eps}
$c_\vr =\inf_{u\in\mathscr{N}_\vr }I_\vr (u)$.
\end{Lem}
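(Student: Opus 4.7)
The plan is to rewrite both sides in terms of the reduced functional $I_\vr$ on $E^+$ via the maximizer map $h_\vr$, and to exploit Lemma \ref{unique t} to identify the maximum of $I_\vr$ along each ray with a point of $\mathscr{N}_\vr$.

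First I would record that, for any $e\in E^+\setminus\{0\}$, every element of $E_e=E^-\oplus\R^+ e$ has the form $u=se+v$ with $s\geq 0$ and $v\in E^-$. By the defining property of $h_\vr$ (the unique maximum of $\widetilde{\Phi}_\vr(\xi+\cdot)$ on $E^-$ for each fixed $\xi\in E^+$), one has
\[
\widetilde{\Phi}_\vr(se+v)\leq \widetilde{\Phi}_\vr\bigl(se+h_\vr(se)\bigr)=I_\vr(se),
\]
with equality when $v=h_\vr(se)$ (the value $s=0$ is handled by $I_\vr(0)=\widetilde{\Phi}_\vr(h_\vr(0))\leq 0$, a fact that follows from $h_\vr(0)=0$, which is seen from the strict concavity of $\phi_0$ and $\phi_0(0)=0$). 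Consequently
\[
\max_{u\in E_e}\widetilde{\Phi}_\vr(u)=\sup_{s\geq 0}I_\vr(se).
\]
By Lemma \ref{unique t} (and the proof thereof), the supremum on the right is attained at the unique $t=t(e)>0$ with $t(e)e\in\mathscr{N}_\vr$, so
\[
\max_{u\in E_e}\widetilde{\Phi}_\vr(u)=I_\vr\bigl(t(e)e\bigr).
\]

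Now the two inequalities come quickly. For the inequality $c_\vr\geq \inf_{\mathscr{N}_\vr}I_\vr$, given any $e\in E^+\setminus\{0\}$ the point $t(e)e$ belongs to $\mathscr{N}_\vr$, so
\[
\max_{u\in E_e}\widetilde{\Phi}_\vr(u)=I_\vr(t(e)e)\geq \inf_{w\in\mathscr{N}_\vr}I_\vr(w);
\]
taking infimum over $e\in E^+\setminus\{0\}$ yields $c_\vr\geq \inf_{\mathscr{N}_\vr}I_\vr$. For the reverse inequality $c_\vr\leq \inf_{\mathscr{N}_\vr}I_\vr$, given any $w\in\mathscr{N}_\vr\subset E^+\setminus\{0\}$, uniqueness in Lemma \ref{unique t} forces $t(w)=1$, whence
\[
I_\vr(w)=\max_{u\in E_w}\widetilde{\Phi}_\vr(u)\geq \inf_{e\in E^+\setminus\{0\}}\max_{u\in E_e}\widetilde{\Phi}_\vr(u)=c_\vr,
\]
and taking infimum over $w\in\mathscr{N}_\vr$ gives the result.

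The only non-routine point is verifying that the pointwise maximum of $\widetilde{\Phi}_\vr$ on each ray $\R^+ e$ in the reduced setting really is attained and coincides with the unique element of $\mathscr{N}_\vr$ on that ray; this is exactly the content already established in Lemma \ref{unique t} (where the strict concavity estimate \eqref{XX4} at critical points of $f(t)=I_\vr(te)$ ensures uniqueness, and the linking geometry from Lemma \ref{tilde Phi-eps linking} ensures attainment). Given that lemma, the proof is a direct bookkeeping argument; no further estimates involving the cut-off nonlocal term $\mathscr{F}_\vr$ are needed here.
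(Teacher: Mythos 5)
Your proof is correct and takes essentially the same route as the paper: both directions are obtained by identifying $\max_{u\in E_e}\widetilde{\Phi}_\vr(u)$ with $I_\vr$ evaluated at the unique point of $\mathscr{N}_\vr$ on $\R^+ e$, via the reduction map $h_\vr$ and the uniqueness from Lemma \ref{unique t}. The paper's version is slightly more compressed (it extracts the Euler condition $(\widetilde{\Phi}_\vr|_{E_e})'(u)=0$ at a maximizer directly and reads off $v=h_\vr(se)$, $se\in\mathscr{N}_\vr$), but the underlying argument is identical.
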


\begin{proof}
Indeed, denoting $\hat c_\vr =\inf_{u\in\mathscr{N}_\vr }I_\vr (u)$,
given $e\in E^+$, if $u=v+se\in E_e$ with $\widetilde{\Phi}_\vr
(u)=\max_{z\in E_e}\widetilde{\Phi}_\vr (z)$ then the restriction
$\widetilde{\Phi}_\vr |_{E_e}$ of $\widetilde{\Phi}_\vr $ on $E_e$
satisfies $(\widetilde{\Phi}_\vr |_{E_e})'(u)=0$ which implies
$v=h_\vr (se)$ and $I_\vr '(se)(se)=0$, i.e. $se\in\mathscr{N}_\vr
$. Thus $\hat c_\vr \leq c_\vr $. On the other hand, if
$w\in\mathscr{N}_\vr $ then $(\widetilde{\Phi}_\vr |_{E_w})'(w+h_\vr
(w))=0$, hence, $c_\vr \leq\max_{u\in E_w}\widetilde{\Phi}_\vr
(u)=I_\vr (w)$. Thus $\hat c_\vr \geq c_\vr $.
\end{proof}

\begin{Lem}\label{For any e  Te} For any $e\in E^+\setminus\{0\}$,
there is $T_e>0$ independent of $\vr $ such that
$t_\vr \leq T_e$ for $t_\vr >0$  satisfying
$t_\vr  e\in\mathscr{N}_\vr $.
\end{Lem}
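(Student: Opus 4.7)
The plan is to pin down a $\vr$-independent positive lower bound together with a $\vr$-independent upper bound of the form $C_1 t_\vr^2 - C_2 t_\vr^\sigma$ on the value $I_\vr(t_\vr e)=\widetilde{\Phi}_\vr(t_\vr e+h_\vr(t_\vr e))$; since $\sigma>2$ by $(g_2)$, these two estimates together force $t_\vr$ into a bounded set depending only on $e$.

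For the lower bound, the assumption $t_\vr e\in\mathscr{N}_\vr$ combined with Lemma \ref{d-eps=c-eps} gives $I_\vr(t_\vr e)\geq c_\vr$, and Lemma \ref{C independent of eps} yields $c_\vr\geq \tau$. It is essential here that Lemma \ref{tilde Phi-eps linking} transfers the constant $\tau$ from Lemma \ref{max Phi-eps<C}(1) to $\widetilde{\Phi}_\vr$ in a manner independent of $\vr\in(0,1]$, so this lower bound is uniform.

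For the upper bound, write $v_\vr:=h_\vr(t_\vr e)\in E^-$ and use the orthogonality of $E^+$ and $E^-$ in both $\langle\cdot,\cdot\rangle$ and $(\cdot,\cdot)_2$ to expand
\[
I_\vr(t_\vr e)=\frac{t_\vr^2}{2}\|e\|^2-\frac12\|v_\vr\|^2-\frac{\omega}{2}\bigl(t_\vr^2|e|_2^2+|v_\vr|_2^2\bigr)-\mathscr{F}_\vr(t_\vr e+v_\vr)-\Psi_\vr(t_\vr e+v_\vr).
\]
Lemma \ref{Gamma-eps nonnegative} together with $\eta\geq 0$ gives $\mathscr{F}_\vr\geq 0$, which may be dropped; the inequality $|v_\vr|_2^2\leq\|v_\vr\|^2/a$ from \eqref{l2ineq} makes the coefficient of $\|v_\vr\|^2$ at most $-\tfrac{a-|\omega|}{2a}\leq 0$, so that term may also be dropped. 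On the other hand, $(P_0)$ furnishes $P_\vr\geq b=\inf P>0$, $(g_2)$ yields $G(s)\geq c_0 s^\sigma$, and \eqref{lpdec} gives $|t_\vr e+v_\vr|_\sigma^\sigma\geq d_\sigma t_\vr^\sigma|e|_\sigma^\sigma$; hence
\[
\Psi_\vr(t_\vr e+v_\vr)\geq bc_0\,d_\sigma t_\vr^\sigma|e|_\sigma^\sigma.
\]
Setting $C_1=\tfrac12\|e\|^2+\tfrac{|\omega|}{2}|e|_2^2$ and $C_2=bc_0 d_\sigma|e|_\sigma^\sigma$, both depending only on $e$, one obtains $I_\vr(t_\vr e)\leq C_1 t_\vr^2-C_2 t_\vr^\sigma$.

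Combining the two estimates gives $\tau\leq C_1 t_\vr^2 - C_2 t_\vr^\sigma$, and because $\sigma>2$ this has no solutions for $t_\vr$ large. Hence there is $T_e>0$, depending only on $e,\omega,a,b,c_0,\sigma,d_\sigma$ and $\tau$, such that $t_\vr\leq T_e$ for all $\vr\in(0,\vr_0]$. There is no genuine obstacle in this argument; the essential observation is simply that dropping the nonlocal piece $\mathscr{F}_\vr$ (available because it is nonnegative) removes the only source of $\vr$-dependence in the upper estimate, leaving constants intrinsic to $e$ and the problem data.
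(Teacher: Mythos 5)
Your proof is correct and takes essentially the same route as the paper's. The paper's proof is one line: since $t_\vr e\in\mathscr{N}_\vr$, $I_\vr(t_\vr e)=\max_{w\in E_e}\widetilde{\Phi}_\vr(w)\geq\tau$, and then it simply invokes Lemma \ref{tilde Phi-eps linking} (which transfers part 2 of Lemma \ref{max Phi-eps<C} to $\widetilde{\Phi}_\vr$, giving a $\vr$-independent radius $R_e$ outside of which $\widetilde{\Phi}_\vr<0$ on $E_e$); you instead re-derive that upper estimate explicitly, which is the same computation carried out rather than cited.
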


\begin{proof}
Since $I_\vr '(t_\vr  e)(t_\vr  e)=0$, one
gets
$$
\widetilde{\Phi}_\vr (t_\vr  e+h_\vr (t_\vr  e))
=\max_{w\in E_e}\widetilde{\Phi}_\vr (w)\geq\tau.
$$
This,
together with Lemma \ref{tilde Phi-eps linking}, shows the
assertion.
\end{proof}

Let $\mathscr{K}_\vr :=\{u\in
E:\ \widetilde{\Phi}_\vr '(u)=0\}$ be the critical set of
$\widetilde{\Phi}_\vr $. Since critical points of $I_\vr$ and
$\widetilde{\Phi}_\vr$ are in one-to-one
correspondence via the injective map $u\mapsto u+h_\vr(u)$ from
$E^+$ into $E$, let us denoted by
$$
\mathscr{C}_\vr :=\{u\in
\mathscr{K}_\vr:\ \widetilde{\Phi}_\vr(u)=c_\vr\},
$$
from Lemma \ref{d-eps=c-eps},
one easily sees that if $\mathscr{C}_\vr\neq\emptyset$ then
$c_\vr=\inf\big\{
\widetilde{\Phi}_\vr (u):\ u\in\mathscr{K}_\vr
\setminus\{0\}
\big\}$.
%It is easy to see that if $\mathscr{K}_\vr
%\setminus\{0\}\not=\emptyset$ then $c_\vr
%=\inf\{\widetilde{\Phi}_\vr (u):\ u\in\mathscr{K}_\vr
%\setminus\{0\}\}$ (see an argument of \cite{Ding2008}).
Next we estimate the regularity of critical points of
$\widetilde{\Phi}_\vr$. By using the
same iterative argument of \cite{Sere2} one obtains easily the
following

\begin{Lem}\label{critical points in W1,q}
If $u\in\mathscr{K}_\vr$ with $|\widetilde{\Phi}_\vr (u)|\leq C_1$,
then, for any $q\in[2,+\infty)$, $u\in
W^{1,q}(\mathbb{R}^3,\mathbb{C}^4)$ with
$\norm{u}_{W^{1,q}}\leq\Lambda_q$ where $\Lambda_q$ depends only on
$C_1$ and $q$.
\end{Lem}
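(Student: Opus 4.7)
The plan is a classical Dirac bootstrap on the Euler--Lagrange equation satisfied by $u$, with the nonlocal potentials handled by Hardy--Littlewood--Sobolev. I would proceed in three stages.

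First, I would upgrade the hypothesis $|\widetilde\Phi_\vr(u)|\leq C_1$ into an $H^{1/2}$-bound on $u$. Any critical point is trivially a $(C)_{\widetilde\Phi_\vr(u)}$-sequence (take the constant sequence $u_n\equiv u$), so the quantitative argument of Lemma \ref{PScseq eps estimate} and Lemma \ref{lemR1}, repeated verbatim with the parameter $\ga_b$ replaced by $C_1$, delivers $\|u\|\leq M(C_1)$ uniformly in $\vr\leq\vr_1$. Consequently $|u|_q\leq C(C_1,q)$ for every $q\in[2,3]$ by the $H^{1/2}$-embedding. The key identity driving this is still $\widetilde\Phi_\vr(u)-\tfrac12\widetilde\Phi_\vr'(u)u=\widetilde\Phi_\vr(u)$ together with Lemma \ref{Gamma-eps nonnegative} and \eqref{R9}.

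Second, I would rewrite the Euler--Lagrange equation as
\[
H_0 u-\omega u \;=\; \Theta_\vr(u)\,u + P_\vr(x)\,g(|u|)\,u,
\]
where $\Theta_\vr(u)$ is a $4\times 4$ matrix-valued potential assembled from the cut-off-weighted Poisson fields $Q_\vr A^k_{\vr,u}$, $k=0,1,2,3$. The cut-off factors $\eta(\|u\|^2)$, $\eta'(\|u\|^2)$ are bounded, and Lemma \ref{A(u) bdd} together with the bound $|u|_\sigma\leq C(C_1)$ from the previous step gives $|A^k_{\vr,u}|_6\leq C(C_1)$ uniformly in $\vr\leq 1$. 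In particular, $\Theta_\vr(u)u\in L^{r}$ for some starting exponent $r>2$ by H\"older.

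Third, I would perform the bootstrap. Since $|\omega|<a$, the resolvent $(H_0-\omega)^{-1}$ has Fourier symbol $(\alpha\cdot\xi-a\beta-\omega)^{-1}$, which by the Mikhlin multiplier theorem is bounded $L^r\to W^{1,r}$ for every $r\in(1,\infty)$. Starting from $u\in L^2\cap L^3$, the subcritical growth $g(s)\leq c_1(1+s^{p-2})$ with $p<3$ of $(g_1)$ places $P_\vr g(|u|)u$ in a strictly better Lebesgue class than $u$ itself, while the Riesz-potential formula \eqref{juan ji} combined with Hardy--Littlewood--Sobolev upgrades the integrability of $A^k_{\vr,u}$ as $u$ gains integrability; once $u\in L^s$ for some $s>3$, one obtains $A^k_{\vr,u}\in L^\infty$ and the remaining iterations are immediate. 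Alternating the resolvent estimate $u=(H_0-\omega)^{-1}[\cdot]\in W^{1,r}$ with Sobolev embedding $W^{1,r}\hookrightarrow L^{3r/(3-r)}$ (respectively $L^\infty$ once $r>3$) a finite number of times reaches any prescribed $q\in[2,\infty)$ with a quantitative bound $\Lambda_q=\Lambda_q(C_1,q)$, independent of $\vr$. This is the iterative scheme already employed in \cite{Sere2}.

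The main obstacle, compared with the single-equation situation in \cite{Sere2}, is that the coefficient $\Theta_\vr(u)$ is itself $u$-dependent and nonlocal, so at every step of the iteration one must verify that its $L^r$-norm improves strictly faster than $|u|_r$ itself and that the constants stay $\vr$-uniform. This is precisely what the Riesz-potential gain provides, and the $\vr$-uniformity follows because every factor appearing in \eqref{juan ji}, Lemma \ref{A(u) bdd} and the cut-off $\eta$ is $\vr$-independent on the range $\vr\in(0,1]$.
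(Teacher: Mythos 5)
Your proposal is correct and follows essentially the same strategy as the paper: one bootstraps the representation $u=(H_0-\omega)^{-1}[\,\cdots\,]$ (the paper writes $u=H_0^{-1}[\,\cdots\,]$ and keeps $\omega u$ on the right, splitting $g=\rho g+(1-\rho)g$ and decomposing $u=u_1+u_2+u_3$ accordingly, but this is cosmetic) using H\"older to control each term and the Riesz-potential/HLS gain to eventually place $A^k_{\vr,u}$ in $L^\infty$, after which the $Q_\vr\alpha_kA^k_{\vr,u}u$ terms no longer obstruct the iteration. The only substantive variation is that the paper passes through the local elliptic estimate $A^k_{\vr,u}\in W^{2,6}_{\mathrm{loc}}\hookrightarrow C^1$ to obtain $A^k_{\vr,u}\in L^\infty$, whereas you argue it directly from the Newtonian-potential formula \eqref{juan ji} once $u\in L^s$ with $s>3$; both work, and the rest (bounding $\|u\|$ first, Mikhlin for the resolvent, finite alternation with Sobolev embedding) is the scheme the paper attributes to \cite{Sere2}.
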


\begin{proof}
See \cite{Sere2}. We outline the proof as follows. From (\ref{D22}),
we write
\[
\begin{aligned}
u&=H_0^{-1}\Big( \om u+Q_\vr (x)A_{\vr ,u}^0u -\sum_{k=1}^3Q_\vr
(x)\al_kA_{\vr ,u}^ku+P_\vr (x)g(\jdz{u})u  \Big).
% &=u_1+u_2+u_3
\end{aligned}
\]
For the later use, let $\rho:[0,\infty)\to[0,1]$
be a smooth function satisfying $\rho(s)=1$ if $s\in[0,1]$ and
$\rho(s)=0$ if $s\in[2,\infty)$. Then we have
\[
\begin{aligned}
g(s):=&\,g_1(s)+g_2(s)\\
=&\,\rho(s)g(s)+(1-\rho(s))g(s).
\end{aligned}
\]
Consequently,
$u=u_1+u_2+u_3$ with
\[
\begin{aligned}
u_1=&H_0^{-1}\bkt{\omega u+P_\vr \cdot g_1(\jdz{u})u},\\
u_2=&H_0^{-1}\bkt{Q_\vr \cdot A_{\vr ,u}^0u-\sum Q_\vr \cdot \alpha_kA_{\vr ,u}^ku},\\
u_3=&H_0^{-1}\bkt{P_\vr \cdot g_2(\jdz{u})u}.
\end{aligned}
\]
Noting that, by H\"{o}lder's inequality, for $q\geq2$
$$
\big|Q_\vr \alpha_kA_{\vr ,u}^ku\big|_s\leq\jdz{Q}_\infty\big|A_{\vr
,u}^k\big|_6\jdz{u}_q
$$
with $\frac{1}{s}=\frac{1}{6}+\frac{1}{q}$ and since
$$
\big|P_\vr
(x)\jdz{u}^{p-2}u\big|_t\leq\jdz{P}_\infty\jdz{u}_{t(p-1)}^{p-1},
$$
one has
$$
u_1\in W^{1,2}\cap W^{1,3},\ u_2\in W^{1,s},\ u_3\in W^{1,t}.
$$
Then, denoting $s^*=\frac{3s}{3-s}$ and $t^*=\frac{3t}{3-t}$,
$u\in W^{1,q}$ with $q=\min\{s^*,t^*\}$. A standard bootstrap argument shows
that $u\in\cap_{q\geq2}L^q$,
$u_1\in\cap_{q\geq2}W^{1,q}$, $u_2\in\cap_{6>q\geq2}W^{1,q}$ and
$u_3\in\cap_{q\geq2}W^{1,q}$.

By the Sobolev's embedding theorems, $u\in C^{0,\gamma}$ for some
$\gamma\in(0,1)$. This, together with elliptic regularity (see
\cite{Trudinger}), shows $A_{\vr ,u}^k\in W^{2,6}_{loc}\cap L^6$ for
$k=0,1,2,3$ and
$$
\big\| A_{\vr ,u}^k \big\|_{W^{2,6}(B_1(x))}\leq C_2 \Big(\vr
^2\jdz{Q}_\infty\jdz{u}_{L^{12}(B_2(x))}^2 +\big| A_{\vr
,u}^k \big|_{L^6(B_2(x))}\Big)
$$
for all $x\in\mathbb{R}^3$, with $C_2$ independent of $x$ and
$\vr $, where $B_r(x)=\{y\in\mathbb{R}^3:\jdz{y-x}<r\}$ for
$r>0$. Since $W^{2,6}(B_1(x))\hookrightarrow C^1({B_1(x)})$, we have
\begin{equation}\label{Ak local estimates}
\big\| A_{\vr ,u}^k \big\|_{C^{1}(B_1(x))}\leq C_3\Big(\vr ^2\jdz{Q}_\infty
\jdz{u}_{L^{12}(B_2(x))}^2+\big| A_{\vr ,u}^k \big|_{L^6(B_2(x))}\Big)
\end{equation}
for all $x\in\mathbb{R}^3$ with $C_3$ independent of $x$ and
$\vr $. Consequently $A_{\vr ,u}^k\in L^\infty$, and that yields
$$
\big| Q_\vr \alpha_kA_{\vr ,u}^ku \big|_s\leq\jdz{Q}_\infty
\big| A_{\vr ,u}^k \big|_\infty\jdz{u}_s.
$$
Thus $u_2\in\cap_{q\geq2}W^{1,q}$, and combining with
$u_1,u_3\in\cap_{q\geq2}W^{1,q}$ the conclusion is obtained.
\end{proof}

\begin{Rem}
Let $\mathscr{L}_\vr$ denote the set of all least energy solutions
of $\widetilde{\Phi}_\vr $. If $u\in\mathscr{L}_\vr$,
$\widetilde{\Phi}_\vr (u)=c_\vr \leq\gamma_b$. Recall that
$\mathscr{L}_\vr$ is bounded in $E$ with an upper bound $\Lambda$
independent of $\vr $. Therefore, as a consequence of Lemma
\ref{critical points in W1,q} we see that, for each
$q\in[2,+\infty)$ there is $C_q>$ independent of $\vr $ such that
\begin{equation}\label{Cq estimate of L}
\norm{u}_{W^{1,q}}\leq C_q\ \ \ \ \mathrm{for\ all}\
u\in\mathscr{L}_\vr.
\end{equation}
This, together with the Sobolev embedding theorem, implies that
there is $C_\infty>0$ independent of
$\vr $ with
\begin{equation}\label{C-infty estimate of L}
\norm{u}_{\infty}\leq C_\infty\ \ \ \ \mathrm{for\ all}\
u\in\mathscr{L}_\vr.
\end{equation}
\end{Rem}

\section{Proof of the main result}

Throughout this section we suppose $\omega\in(-a,a)$ and
that $(g_1)$-$(g_2)$, $(Q_0)$, $(P_0)$ are satisfied,
and recall that we always assume
$0\in\mathscr{P}$. The main theorem will be carried in
three parts: {\it Existence, Concentration, and Exponential decay}.

%In order to give a better look at our work, in the sequel, we will
%state the Lemmas before the proofs. This enable us to describe the
%outline of our abstract theorem.

\subsection*{Part 1. Existence}
Keeping the notation of Section \ref{Preliminary results} we now turn
to the existence result of the main theorem.
The proof is carried out in three lemmas. The modified problem gives
us an access to Lemma \ref{d to gamma}, which is the key ingredient
for Lemma \ref{d-eps is attained}.

Recall that $\ga_m$ denotes the least energy of $\mathscr{T}_m$ (see the
subsection \ref{subsec1}), where
$\mu=m:=\max_{x\in\R^3}P(x)$, and $J_m$ denotes the associated reduction
functional on $E^+$. We have

\begin{Lem}\label{d to gamma}
$c_\vr \to\ga_m$ as $\vr \to0$.
\end{Lem}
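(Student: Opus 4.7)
I would establish the two one-sided bounds $\limsup_{\vr\to 0} c_\vr\le \ga_m$ and $\liminf_{\vr\to 0} c_\vr\ge \ga_m$ separately. Both rely on comparing $\widetilde{\Phi}_\vr$ with $\mathscr{T}_m$ through the identity
\[
\widetilde{\Phi}_\vr(v)-\mathscr{T}_m(v)= -\mathscr{F}_\vr(v)+\int_{\R^3}(m-P_\vr(x))G(|v|)\,dx,
\]
in which $\mathscr{F}_\vr(v)\ge 0$ by Lemma \ref{Gamma-eps nonnegative} and $(m-P_\vr)G(|v|)\ge 0$ by the assumption $0\in\mathscr{P}$, so the two correction terms have opposite effects and must be handled differently for each direction.

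\emph{Upper bound.} I would pick a least energy solution $u_m\in\mathscr{R}_m$, which decays exponentially by Lemma \ref{ding2008}, and use $e:=u_m^+$ as a trial direction, so that $\max_{v\in E_e}\mathscr{T}_m(v)=\mathscr{T}_m(u_m)=\ga_m$. By the reduction of Subsection 3.2 and Lemma \ref{For any e Te},
\[
c_\vr\le \max_{v\in E_e}\widetilde{\Phi}_\vr(v)=\widetilde{\Phi}_\vr(v_\vr),\qquad v_\vr:=s_\vr e+h_\vr(s_\vr e),
\]
with $s_\vr\in[0,T_e]$ and $\{v_\vr\}$ bounded in $E$. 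Using $-\mathscr{F}_\vr\le 0$ and $\mathscr{T}_m(v_\vr)\le\ga_m$, the task reduces to proving $\int(m-P_\vr)G(|v_\vr|)\,dx=o(1)$. I would extract a subsequence with $s_\vr\to s^*$ and $v_\vr\rightharpoonup v^*$ weakly in $E$, and pass to the limit in $\phi_{s_\vr e}'(h_\vr(s_\vr e))=0$; the estimates \eqref{estimates of Gamma-eps0}--\eqref{estimates of Gamma-eps2} send $\mathscr{F}_\vr$ and $\mathscr{F}_\vr'$ to $0$ on bounded sets, identifying $v^*=s^*e+\mathscr{J}_m(s^*e)$. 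To dispose of the residual integral I would split $\R^3$ into a core $\{|x|\le r_\delta/\vr\}$, where $|m-P_\vr|<\delta$ thanks to the continuity of $P$ at $0\in\mathscr{P}$, and its complement, where the exponential decay of the limit profile $u_m$ together with the $L^q_\loc$ compactness of $E$ (and the growth $G(s)\le\de s^2+c_\de' s^p$) tightens the tail of $G(|v_\vr|)$.

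\emph{Lower bound.} For any $e\in E^+\setminus\{0\}$ let $u_e:=t_m(e)e+\mathscr{J}_m(t_m(e)e)$ be the unique maximizer of $\mathscr{T}_m|_{E_e}$. Since $P_\vr\le m$ and $\mathscr{F}_\vr(v)\le C\vr^2\|v\|^4$,
\[
\max_{v\in E_e}\widetilde{\Phi}_\vr(v)\ge\widetilde{\Phi}_\vr(u_e)\ge J_m(t_m(e)e)-C\vr^2\|u_e\|^4\ge \ga_m-C\vr^2\|u_e\|^4.
\]
When $J_m(t_m(e)e)\le 2\ga_b$, a direct adaptation of the argument of Lemma \ref{PScseq eps estimate} to $\mathscr{T}_m$ (with $\vr=0$ and $\mathscr{F}_\vr\equiv 0$) bounds $\|u_e\|$ by a universal constant, yielding $\max_{E_e}\widetilde{\Phi}_\vr\ge \ga_m-O(\vr^2)$. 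For directions with $J_m(t_m(e)e)>2\ga_b$, $\mathscr{T}_m$ takes the value $2\ga_b$ at some bounded point on the segment $\{te:0\le t\le t_m(e)\}$, and the same estimate produces $\max_{E_e}\widetilde{\Phi}_\vr\ge 2\ga_b-O(\vr^2)>\ga_m$ for small $\vr$. Taking the infimum over $e$ gives $c_\vr\ge \ga_m-O(\vr^2)$.

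\emph{Main obstacle.} The crux is the vanishing of $\int(m-P_\vr)G(|v_\vr|)\,dx$ for the approximate maximizer $v_\vr$ in the upper-bound step: only weak convergence $v_\vr\rightharpoonup u_m$ in $E$ is immediately available, and $\{G(|v_\vr|)\}$ need not be uniformly integrable a priori. Overcoming this requires combining the shrinking support of $(m-P_\vr)$ near the peak $0\in\mathscr{P}$ with the local strong convergence coming from $E\hookrightarrow L^q_\loc$ and the pointwise exponential decay of the limit profile $u_m$ from Lemma \ref{ding2008}.
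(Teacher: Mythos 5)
Your plan correctly identifies the decomposition $\widetilde{\Phi}_\vr-\mathscr{T}_m=-\mathscr{F}_\vr+\int(m-P_\vr)G(|\cdot|)$ and the two directions to prove, but both halves have problems.

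\emph{Lower bound.} Your case split is unnecessary and the second case is not justified. When $J_m(t_m(e)e)>2\ga_b$ you pick $t'\in(0,t_m(e))$ with $J_m(t'e)=2\ga_b$ and claim $\|t'e+\mathscr{J}_m(t'e)\|$ is universally bounded via ``the argument of Lemma \ref{PScseq eps estimate}''; but at such an interior point of the ray one only has $\mathscr{T}_m'(u_{t'e})v=0$ for $v\in E^-$ while $\mathscr{T}_m'(u_{t'e})e>0$ (since $t\mapsto J_m(te)$ is strictly increasing on $(0,t_m(e))$), which is the \emph{wrong sign} to close the Cerami-type estimate \eqref{R4}--\eqref{R5}; that argument requires $\mathscr{T}_m'(u)(u^+-u^-)\le 0$ (or $=o(1)$), not $>0$, so the bound on $\|u_{t'e}\|$ does not follow. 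The entire difficulty evaporates if you use what the cut-off was introduced for: by the definition of $\eta$ and \eqref{estimates of Gamma-eps0}, $\mathscr{F}_\vr(u)=\eta(\|u\|^2)\Gamma_\vr(u)\le C\vr^2(T+1)^2$ for \emph{every} $u\in E$, uniformly. Hence $\widetilde{\Phi}_\vr(u)\ge\mathscr{T}_m(u)-O(\vr^2)$ on all of $E$, and for any $e$, evaluating at the maximizer of $\mathscr{T}_m|_{E_e}$ gives $\max_{E_e}\widetilde{\Phi}_\vr\ge\ga_m-O(\vr^2)$ with no boundedness of $\|u_e\|$ needed. This is exactly the observation the paper uses (in the form of a contradiction argument).

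\emph{Upper bound.} Here there is a genuine gap, which you flag as the ``main obstacle'' but do not actually overcome. You need $\int(m-P_\vr)G(|v_\vr|)\to 0$ for $v_\vr=s_\vr e+h_\vr(s_\vr e)$. Splitting at radius $r$ works on $\{|x|\le r\}$ (where $m-P_\vr\to 0$ locally uniformly), but on $\{|x|>r\}$ you must bound $\int_{|x|>r}G(|v_\vr|)$ uniformly in $\vr$, i.e.\ you need tightness of $\{|v_\vr|\}$ in $L^q$. Weak convergence in $E$, $L^q_{\loc}$ compactness, and the exponential decay of the weak limit $u_m$ do not give this: the decay of the limit says nothing about whether mass of $v_\vr$ escapes to infinity along the sequence. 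The paper circumvents this by a Taylor-expansion argument (\eqref{the equa 1}, \eqref{the equa 2}, \eqref{v-eps estimate}) comparing $w_\vr=t_\vr e+h_\vr(t_\vr e)$ with the \emph{explicitly tight} family $u_\vr=t_\vr e+\mathscr{J}_m(t_\vr e)$ (which converges strongly in $E$ since $t_\vr\to t_0$ and $t\mapsto te+\mathscr{J}_m(te)$ is continuous), deriving first the \emph{strong} convergence $\|w_\vr-u_\vr\|\to 0$, and only then concluding $\int P^0_\vr G(|w_\vr|)\to 0$. Crucially, the residual integral in the estimate \eqref{v-eps estimate} involves $u_\vr$, not $w_\vr$, which is why tightness is available. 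You would need to replicate this (or some other strong-convergence argument) to make the upper bound rigorous; as written, the step fails.
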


\begin{Lem}\label{d-eps is attained}
$c_\vr $ is attained for all small $\vr >0$.
\end{Lem}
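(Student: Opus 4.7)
The plan is to produce a $(C)_{c_\vr}$-sequence for $\widetilde{\Phi}_\vr$ via the linking theorem for strongly indefinite functionals of Ding and Szulkin, which applies thanks to the geometry of Lemma \ref{tilde Phi-eps linking} together with the characterization $c_\vr=\inf_{u\in\mathscr{N}_\vr}I_\vr(u)$ in Lemma \ref{d-eps=c-eps}. For $\vr\leq\vr_1$, Lemma \ref{lemR1} forces any such sequence $\{u_n\}$ to be bounded in $E$ and to eventually satisfy $\widetilde{\Phi}_\vr(u_n)=\Phi_\vr(u_n)$ and $\widetilde{\Phi}_\vr'(u_n)=\Phi_\vr'(u_n)$, so everything reduces to the genuine functional $\Phi_\vr$. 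Extracting a subsequence gives $u_n\rightharpoonup u_\vr$ weakly in $E$, strongly in $L^q_{\loc}$ for $q\in[1,3)$, and a.e.; passing to the limit in $\Phi_\vr'(u_n)=0$ via the continuity of the nonlocal map $\mathcal{A}_{\vr,k}$ (Lemma \ref{lemma of Aj}) and local compactness for the Nemytskii part yields $u_\vr\in\mathscr{K}_\vr$.

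The heart of the proof is to show $u_\vr\neq 0$: once that is established, Fatou applied to $\Gamma_\vr\geq 0$ (Lemma \ref{Gamma-eps nonnegative}) and to the non-negative density $\widehat{G}$ (cf.\ \eqref{R9}) together with
\[
c_\vr=\lim_{n\to\infty}\Big(\Phi_\vr(u_n)-\tfrac12\Phi_\vr'(u_n)u_n\Big)\geq\Gamma_\vr(u_\vr)+\int P_\vr\widehat{G}(|u_\vr|)=\Phi_\vr(u_\vr)
\]
and the identity $c_\vr=\inf\{\widetilde{\Phi}_\vr(w):w\in\mathscr{K}_\vr\setminus\{0\}\}$ force $\Phi_\vr(u_\vr)=c_\vr$. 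To rule out $u_\vr=0$, I invoke a Lions-type non-vanishing argument: $c_\vr\geq\tau>0$ prevents $L^q$-vanishing of $\{u_n\}$ over unit balls, so there exist $y_n\in\R^3$ with $v_n:=u_n(\cdot+y_n)\rightharpoonup v\neq 0$. If $\{y_n\}$ stays bounded then $u_\vr\neq 0$ directly; otherwise, extracting a subsequence, either $\vr y_n\to x_0\in\R^3$ or $|\vr y_n|\to\infty$, and the shifted coefficients $P_\vr(\cdot+y_n),Q_\vr(\cdot+y_n)$ converge on compact sets to the constants $P(x_0),Q(x_0)$ or to limit values $\overline P\leq\limsup_{|x|\to\infty}P(x)<m$ and $\overline Q$.

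Exploiting the translation identity for the nonlocal potential and passing to the limit, $v$ emerges as a non-trivial critical point of an autonomous Maxwell--Dirac functional with constants $(\overline P,\overline Q)$; since the nonlocal contribution is of order $\vr^2$ by \eqref{estimates of Gamma-eps0}, its least energy agrees with the pure Dirac least energy $\gamma_{\overline P}$ (or $\gamma_{P(x_0)}$) modulo an $o(1)$ as $\vr\to 0$. A Brezis--Lieb splitting combined with the non-negativity of $\Gamma_\vr$ then gives $c_\vr\geq\gamma_{\overline P}-o(1)$. If $\overline P<m$ the strict monotonicity in Lemma \ref{ding2010}$(2)$ yields $\gamma_{\overline P}>\gamma_m$, which combined with $c_\vr\to\gamma_m$ (Lemma \ref{d to gamma}) is a contradiction for small $\vr$; hence in the unbounded case $P(x_0)=m$ must hold, and translating back gives a non-trivial weak limit in the original variables. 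The main technical obstacle is tracking $\Gamma_\vr$ under translations and energy decompositions, which is handled using the representation \eqref{juan ji}, the uniform $L^6$ bound of Lemma \ref{A(u) bdd}, the continuity of the nonlocal maps (Lemma \ref{lemma of Aj}), and the weak lower semicontinuity of Lemma \ref{Gamma-eps nonnegative}.
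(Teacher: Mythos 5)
Your overall skeleton is the same as the paper's: pull a bounded $(C)_{c_\vr}$-sequence from the linking/reduction geometry (the paper uses Ekeland on $I_\vr$ rather than a deformation linking theorem, but either is acceptable), use Lemma~\ref{lemR1} to see $\widetilde{\Phi}_\vr$ and $\Phi_\vr$ agree, pass to a weak limit $u_\vr\in\mathscr{K}_\vr$, and use $\Phi_\vr-\tfrac12\Phi_\vr'\cdot$ together with Fatou and the nonnegativity of $\Gamma_\vr$, $\widehat G$ to get $\Phi_\vr(u_\vr)\leq c_\vr$. The whole proof rests on ruling out $u_\vr=0$, and here your argument has a genuine gap.

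You rule out $u_\vr=0$ via a Lions translation argument: take $y_n$ with $v_n=u_n(\cdot+y_n)\rightharpoonup v\neq0$, and in the unbounded case claim the shifted coefficients $P_\vr(\cdot+y_n)$, $Q_\vr(\cdot+y_n)$ converge on compact sets to constants $\ov P,\ov Q$. For a \emph{fixed} $\vr>0$ this is false: for $x$ in a compact set $K$, $P_\vr(x+y_n)=P(\vr x+\vr y_n)$ varies over the image of the fixed-diameter set $\vr K+\vr y_n$, and $(P_0)$ only gives a Lipschitz bound, not that $P$ stabilizes at infinity; the locally uniform limit (after a subsequence) is a nonconstant Lipschitz function $\hat P$ with $\hat P\leq\limsup_{|x|\to\infty}P$, not a number. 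Likewise $\hat Q$ is nonconstant, and the translated nonlocal potentials do not become autonomous. Your next step --- ``the nonlocal contribution is $O(\vr^2)$ so the limit least energy is $\gamma_{\ov P}$ modulo $o(1)$ as $\vr\to0$'' --- mixes the two limits $n\to\infty$ (at fixed $\vr$) and $\vr\to0$; for a fixed small $\vr$ the limit problem has a fixed nonlocal term, and you have not introduced a least-energy level for it nor shown it is close to $\gamma_{\ov P}$. Finally, ``translating back gives a nontrivial weak limit in the original variables'' is incorrect when $|y_n|\to\infty$: a nontrivial limit of the translates never produces $u_\vr\neq0$. The paper takes a different route here that avoids limit equations entirely: it truncates $P$ at a level $\kappa$ with $\limsup_{|x|\to\infty}P<\kappa<m$, introduces $\widetilde{\Phi}_\vr^\kappa$ with min-max level $c_\vr^\kappa$, shows $c_\vr^\kappa\to\gamma_\kappa$ as $\vr\to0$ by the same argument as Lemma~\ref{d to gamma}, and then, assuming $w_{\vr_j}=0$ for some $\vr_j\to0$, compares levels via $t_nu_n\in\mathscr{N}_{\vr_j}^\kappa$: the excess $\int_{A_{\vr_j}}(P_{\vr_j}-P^\kappa_{\vr_j})G(\cdot)\to0$ because $A_{\vr_j}=\{P_{\vr_j}>\kappa\}$ is bounded and $u_n\to0$ in $L^q_{\loc}$, yielding $c_{\vr_j}^\kappa\leq c_{\vr_j}$ and in the limit $\gamma_\kappa\leq\gamma_m$, contradicting Lemma~\ref{ding2010}~2). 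You would need to either adopt this truncation argument, or carry out your translation argument as a genuine double-limit diagonal extraction in both $n$ and $j$ (so that the diameter $\vr_j K$ shrinks and the coefficients \emph{do} localize to constants), with a careful Brezis--Lieb decomposition that you currently only gesture at.
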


\begin{Lem}\label{least energy solution compact}
$\mathscr{L}_\vr $ is compact in $W^{1,q}$ for each $q\geq 2$, for
all small $\vr >0$.
\end{Lem}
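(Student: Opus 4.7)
The plan is to take any sequence $\{u_n\}\subset\mathscr{L}_\vr$ and extract a subsequence converging in $W^{1,q}$ for every $q\geq 2$. By Lemma \ref{lemR1} together with \eqref{Cq estimate of L}--\eqref{C-infty estimate of L}, $\{u_n\}$ is a bounded $(C)_{c_\vr}$-sequence of $\Phi_\vr$, uniformly bounded in $W^{1,q}$ for each $q$ and in $L^\infty$. Extracting a subsequence, $u_n\rightharpoonup u$ in $E$, a.e.~and in $L^q_{\loc}$ for every $q\in[2,3)$.

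The main obstacle is to show $u\neq 0$. Vanishing ($|u_n|_r\to 0$ for some $r\in(2,3)$) is excluded as in the proof of Lemma \ref{PScseq eps estimate}: passing $\Phi'_\vr(u_n)(u_n^+-u_n^-)\to 0$ to the limit, the $\Gamma_\vr$ and $\Psi_\vr$ contributions drop, so \eqref{l2ineq} forces $\|u_n\|\to 0$, contradicting $c_\vr\geq\tau$ from Lemma \ref{C independent of eps}. By Lions' lemma one therefore has $y_n\in\R^3$ with $\int_{B_1(y_n)}|u_n|^2\geq\rho>0$; if $\{y_n\}$ is bounded, compact embedding gives $u\neq 0$ directly. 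In the remaining case $|y_n|\to\infty$ and hence, for the fixed $\vr>0$, $\vr y_n\to\infty$; passing to a further subsequence, the shifted coefficients $P_\vr(\cdot+y_n),Q_\vr(\cdot+y_n)$ converge locally uniformly (by Lipschitz continuity) to constants $P^*\leq P_\infty:=\limsup_{|y|\to\infty}P(y)$ and $Q^*\in[0,|Q|_\infty]$, and $v_n(\cdot)=u_n(\cdot+y_n)\rightharpoonup v\neq 0$ is a critical point of the resulting constant-coefficient nonlocal functional $\Phi^*_*$. A Fatou lower bound on the shifted version of \eqref{R3} then gives $c_\vr\geq\Phi^*_*(v)\geq\gamma^*$, the least energy of $\Phi^*_*$. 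Since the nonlocal part of $\Phi^*_*$ carries the factor $\vr^2$, $\gamma^*\to\gamma_{P^*}\geq\gamma_{P_\infty}$ as $\vr\to 0$, and by Lemma \ref{ding2010} $\gamma_{P_\infty}>\gamma_m$; combined with Lemma \ref{d to gamma} ($c_\vr\to\gamma_m$) this yields $c_\vr<\gamma^*$ for $\vr$ small enough, a contradiction. Hence $u\neq 0$.

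Given $u\neq 0$, weak sequential continuity of $\Phi'_\vr$ (Lemma \ref{lemma of Aj} plus local compactness) yields $\Phi'_\vr(u)=0$. Lemma \ref{Gamma-eps nonnegative}, Fatou applied to $\widehat G\geq 0$, and the identity \eqref{R3} together give $\Phi_\vr(u)\leq c_\vr$, while the minimax characterization forces the reverse, so $u\in\mathscr{L}_\vr$. The equality case in Fatou then forces $\int P_\vr\widehat G(|u_n|)\to\int P_\vr\widehat G(|u|)$; using $\widehat G(s)\geq c_\theta s^\sigma$ from \eqref{R9} and a Brezis--Lieb split, one concludes $u_n\to u$ in $L^\sigma$. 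Interpolation with the uniform $L^\infty$ bound \eqref{C-infty estimate of L} extends this to $L^r$ for every $r\in[2,\infty)$. Testing the difference of the equations for $u_n$ and $u$ against $(u_n-u)^+-(u_n-u)^-$, using Lemma \ref{lemma of Aj} for the nonlocal remainder and the strong $L^r$-convergences for the local part, and invoking \eqref{l2ineq}, gives $\|u_n-u\|\to 0$ in $E$.

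To upgrade from $E$ to $W^{1,q}$, I would reuse the decomposition $u_n-u=\widetilde u_n^1+\widetilde u_n^2+\widetilde u_n^3$ from the proof of Lemma \ref{critical points in W1,q}. The pieces $\widetilde u_n^1,\widetilde u_n^3$ are controlled by the established $L^r$-convergence applied to $g_1$ and $g_2$; for $\widetilde u_n^2$, Lemma \ref{lemma of Aj} combined with \eqref{Ak local estimates} upgrades the $L^r$-convergence of $u_n$ to $W^{2,6}_\loc$-convergence of each $A^k_{\vr,u_n}$, hence to $L^\infty$-convergence, so the nonlocal product $Q_\vr A^k_{\vr,u_n} u_n$ converges in $L^s$ for every $s\geq 2$. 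Bootstrapping exactly as in Lemma \ref{critical points in W1,q} then yields $\|u_n-u\|_{W^{1,q}}\to 0$ for every $q\geq 2$, completing the proof.
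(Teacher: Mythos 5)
Your outline of the later stages — showing $u\in\mathscr{L}_\vr$ via Fatou and the minimax characterization, recovering strong $L^\sigma$-convergence from equality in Fatou plus Brezis--Lieb, testing the equations to get $E$-convergence, and bootstrapping to $W^{1,q}$ — is in the spirit of the paper's second half and is sound. The genuine gap is in the step you call the main obstacle: showing the weak limit is nonzero. You rule out vanishing, obtain centers $y_n$, and in the case $|y_n|\to\infty$ claim that ``passing to a further subsequence, the shifted coefficients $P_\vr(\cdot+y_n), Q_\vr(\cdot+y_n)$ converge locally uniformly (by Lipschitz continuity) to constants $P^*\leq P_\infty$ and $Q^*$''. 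This is false in general for \emph{fixed} $\vr>0$. Writing $P_\vr(x+y_n)=P(\vr x+\vr y_n)$, the family is equi-Lipschitz with constant $\vr L$, where $L$ is the Lipschitz constant of $P$; Arzel\`a--Ascoli yields a locally uniform limit along a subsequence, but that limit is a $\vr L$-Lipschitz function $\hat P$ (with $\hat P\le P_\infty$ pointwise), not a constant unless $P$ happens to have a limit at infinity. Hypothesis $(P_0)$ does not force this: for instance $P(\xi)=A+B\sin|\xi|+Ce^{-|\xi|^2}$ with suitable $A>B>0$, $C>0$ satisfies $(P_0)$ yet has no limit at infinity. With a non-constant $\hat P$ the appeal to Lemma \ref{ding2010} (stated for the autonomous limit equation with a constant parameter $\mu$) is unjustified, and ``$\gamma^*\to\gamma_{P^*}$ as $\vr\to 0$'' also becomes unsupported because the translated limiting problem itself changes with $\vr$ through the choice of $\{y_n\}$, so you would need a diagonal argument whose details are not trivial.

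The gap is likely repairable (compare the minimax level with coefficient $\hat P\le P_\infty$ to that with the constant $P_\infty$ and then show the latter tends to $\gamma_{P_\infty}$ as $\vr\to 0$), but this is exactly the delicate comparison the paper avoids. The paper instead reduces the non-vanishing step to the contradiction argument already set up in the proof of Lemma \ref{d-eps is attained}: introduce $P^\kappa=\min\{\kappa,P\}$ with $\limsup_{|x|\to\infty}P<\kappa<m$, so $A_\vr=\{P_\vr>\kappa\}$ is bounded; since $u_n\rightharpoonup 0$ implies $u_n\to 0$ in $L^q_{\loc}$ for $q\in[1,3)$, the extra term $\int_{A_{\vr_j}}(P_{\vr_j}-P^\kappa_{\vr_j})G(\cdot)$ vanishes, yielding $c^\kappa_{\vr_j}\le c_{\vr_j}$, and letting $j\to\infty$ gives $\gamma_\kappa\le\gamma_m$, contradicting $\gamma_m<\gamma_\kappa$ from Lemma \ref{ding2010}. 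This sidesteps any identification of a translated limiting problem, which is precisely where your argument breaks down.
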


\begin{proof}[Proof of Lemma \ref{d to gamma}] Firstly we show that
\begin{equation}\label{d-eps>gamma}
\liminf\limits_{\vr \to0}c_\vr \geq\ga_m.
\end{equation}
Arguing indirectly, assume that $\liminf_{\vr \to0}\,c_\vr
<\ga_m$. By the definition of $c_\vr$ and Lemma \ref{d-eps=c-eps}
we can choose an $e_j\in \mathscr{N}_\vr$ and $\delta>0$ such that
$$
\max_{u\in E_{e_j}}\widetilde{\Phi}_{\vr _j}(u)\leq\ga_m-\delta
$$
as $\vr _j\to0$. Since $P_\vr (x)\leq m$ and $\mathscr{F} (u)=o(1)$
as $\vr\to 0$ uniformly in $u$ (by \eqref{estimates of Gamma-eps0}
and the definition of $\eta$), the representations of
$\widetilde{\Phi}_{\vr}$ and $\mathscr{T}_m$ imply that $\widetilde{\Phi}_\vr
(u)\geq \mathscr{T}_m(u)-\de/2$ for all $u\in E$ and $\vr$ small. Note also
that $\ga_m\leq J_m(e_j) \leq \max_{u\in E_{e_j}}\mathscr{T}_m(u)$.
Therefore we get, for all $\vr_j$ small,
$$
\ga_m-\delta\geq\max_{u\in E_{e_j}} \widetilde{\Phi}_{\vr _j}(u)
\geq\max_{u\in E_{e_j}}\mathscr{T}_m(u)-\frac{\de}{2}\geq\ga_m-\frac{\de}{2},
$$
a contradiction.

\medskip

We now turn to prove the desired conclusion. Set $P^0(x)=m-P(x)$ and
$P^0_\vr (x)=P^0(\vr  x)$. Then
\begin{equation}\label{tilde-Phi-eps----Gm}
\widetilde{\Phi}_\vr (u)=\mathscr{T}_m(u)-\mathscr{F}_\vr (u)+\int P^0_\vr
(x)G(\jdz{u}).
\end{equation}

In virtue of Lemma \ref{ding2008}, let $u=u^++u^-\in\mathscr{R}_m$,
be a least energy solution of the limit equation with $\mu=m$, and set
$e=u^+$. Clearly, $e\in\mathscr{M}_m$, $\mathscr{J}_m(e)=u^-$ and
$J_m(e)=\ga_m$. There is a unique $t_\vr >0$ such that $t_\vr
e\in\mathscr{N}_\vr $ and one has
\begin{equation}\label{d-eps<I-eps(t-eps)}
c_\vr \leq I_\vr (t_\vr  e).
\end{equation}
By Lemma \ref{For any e  Te} $t_\vr $ is bounded. Hence, without
loss of generality we can assume $t_\vr \to t_0$ as $\vr \to0$.
Using (\ref{the equa 1}) and (\ref{the equa 2}), we infer
\[
\begin{aligned}
&\frac{1}{2}\norm{v_\vr }^2+(I)
=\ \widetilde{\Phi}_\vr (w_\vr)-\widetilde{\Phi}_\vr (u_\vr)\\
=&\ \mathscr{T}_m(w_\vr)-\mathscr{T}_m(u_\vr)-\mathscr{F}_\vr (w_\vr)
+\mathscr{F}_\vr (u_\vr)\\
& +\int P^0_\vr (x)G(\jdz{w_\vr})-\int P^0_\vr (x)G(\jdz{u_\vr})
\end{aligned}
\]
where, setting
\[
u_\vr=t_\vr e+\mathscr{J}_m(t_\vr e), \  w_\vr=t_\vr e+h_\vr
(t_\vr e), \  v_\vr =u_\vr-w_\vr,
\]
\[
(I):=\frac{\omega}{2}\jdz{v_\vr }_2^2+ \int_0^1(1-s)
\big(\mathscr{F}_\vr ''(w_\vr+sv_\vr )[v_\vr ,v_\vr ]
+\Psi_\vr ''(w_\vr+sv_\vr )[v_\vr ,v_\vr
]\big)dt.
\]
Taking into account that
\[
\mathscr{F}_\vr (u_\vr)-\mathscr{F}_\vr (w_\vr)
=\mathscr{F}_\vr '(w_\vr)v_\vr
+\int_0^1(1-s)\mathscr{F}_\vr ''(w_\vr+sv_\vr
)[v_\vr ,v_\vr ]dt
\]
and
\[
\begin{aligned}
&\int P^0_\vr (x)\big(G(\jdz{w_\vr})-G(\jdz{u_\vr})\big)\\
=&-\int P^0_\vr (x)g(\jdz{u_\vr})u_\vr \cdot\overline{v_\vr }
+\int_0^1(1-s)\mathscr{G}_m''(u_\vr-sv_\vr )[v_\vr ,v_\vr ]dt\\
&-\int_0^1(1-s)\Psi_\vr ''(u_\vr-sv_\vr )[v_\vr
,v_\vr ]dt,
\end{aligned}
\]
setting
\[
\begin{aligned}
(II):=&\int_0^1(1-s)\Psi_\vr ''(u_\vr-sv_\vr
)[v_\vr ,v_\vr ]dt,
\end{aligned}
\]
one has
\[
\begin{aligned}
&\frac{1}{2}\norm{v_\vr }^2+(I)+(II)\\
\leq&\,\mathscr{F}_\vr '(w_\vr)v_\vr
+\int_0^1(1-s)
\mathscr{F}_\vr ''(w_\vr+sv_\vr )[v_\vr ,v_\vr ]
-\int P^0_\vr (x)g(\jdz{u_\vr}) u_\vr \cdot\overline{v_\vr }\,.
\end{aligned}
\]
So we deduce, noticing that $0\leq P^0_\vr (x)\leq m$,
\begin{equation}\label{v-eps estimate}
\begin{split}
 &\frac{1}{2}\norm{v_\vr }^2+\frac{\omega}{2}\jdz{v_\vr }_2^2+\int_0^1(1-s)
 \Psi_\vr ''(w_\vr+sv_\vr )[v_\vr ,v_\vr ]\\
 \leq&\jdz{\mathscr{F}_\vr '(w_\vr)v_\vr }
 +\int P^0_\vr (x)g(\jdz{u_\vr})
 \jdz{u_\vr}\cdot\jdz{v_\vr }.
\end{split}
\end{equation}
Since $t_\vr\to t_0$, it is clear that $\{u_\vr\}, \{w_\vr\}$ and $\{v_\vr\}$ are bounded,
hence, by the definitions and \eqref{estimates of Gamma-eps0},
\eqref{estimates of Gamma-eps1}, %\eqref{estimates of Gamma-eps2},
\[
\mathscr{F}_\vr(z_\vr)=o(1), \quad \|\mathscr{F}'_\vr(z_\vr)\|=o(1)
\]
as $\vr\to 0$ for $z_\vr=u_\vr, w_\vr, v_\vr$.
In addition, by noting that for $q\in[2,3]$
$$
\limsup_{r\to\infty}\int_{\jdz{x}>r}\jdz{u_\vr}^{q}=0,
$$
using the assumption $0\in\mathscr{P}$ one deduces
\[
\begin{aligned}
&\int \bkt{P^0_\vr (x)}^{q/(q-1)}\jdz{u_\vr}^{q}\\
=&\bigg(\int_{\jdz{x}\leq r}+\int_{\jdz{x}>r}\bigg){P^0_\vr (x)}^{q/(q-1)}\jdz{u_\vr}^{q}\\
\leq&\int_{\jdz{x}\leq r}\bkt{P^0_\vr (x)}^{q/(q-1)}\jdz{u_\vr}^{q}
+m^{q/(q-1)}\int_{\jdz{x}>r}\jdz{u_\vr}^{q}\\
=&\ o(1)
\end{aligned}
\]
as $\vr \to0$. Thus by (\ref{v-eps estimate}) one has
$\norm{v_\vr}^2\to0$, that is, $h_\vr
(t_\vr  e)\to\mathscr{J}_m(t_0 e)$. Consequently,
$$
\int P^0_\vr (x)G(\jdz{w_\vr})\to0
$$
as $\vr \to0$. This, jointly with (\ref{tilde-Phi-eps----Gm}), shows
\[
\widetilde{\Phi}_\vr (w_\vr)=\mathscr{T}_m(w_\vr)+o(1)
 =\mathscr{T}_m(u_\vr)+o(1),
\]
that is,
$$
I_\vr (t_\vr  e)=J_m(t_0e)+o(1)
$$
as $\vr \to0$. Then, since
$$
J_m(t_0e)\leq\max_{v\in E_e}\mathscr{T}_m(v)=J_m(e)=\ga_m,
$$
we obtain by using \eqref{d-eps>gamma} and
(\ref{d-eps<I-eps(t-eps)})
$$
\ga_m\leq\lim_{\vr \to0}c_\vr \leq\lim_{\vr \to0}I_\vr (t_\vr
e)=J_m(t_0e)\leq\ga_m,
$$
hence, $c_\vr \to\gamma_m$.
\end{proof}

\medskip

\begin{proof}[Proof of Lemma \ref{d-eps is attained}]
Given $\vr >0$, let $\{u_n\}\subset\mathscr{N}_\vr $ be a
minimizing sequence: $I_\vr (u_n)\to c_\vr $. By the Ekeland
variational principle we can assume that $\{u_n\}$ is in fact a
$(PS)_{c_\vr }$-sequence for $I_\vr $ on $E^+$ (see
\cite{Pankov,Willem}). Then $w_n=u_n+h_\vr (u_n)$ is a $(PS)_{c_\vr
}$-sequence for $\widetilde{\Phi}_\vr $ on $E$. It is clear that
$\{w_n\}$ is bounded, hence is a $(C)_{c_\vr}$-sequence. We can assume
without loss of generality that $w_n\rightharpoonup w_\vr =w_\vr
^++w_\vr ^-\in\mathscr{K}_\vr $ in $E$. If $w_\vr \not=0$ then
$\widetilde{\Phi}_\vr (w_\vr )=c_\vr $. So we are going to show that
$w_\vr \not=0$ for all small $\vr >0$.

For this end, take $\limsup_{\jdz{x}\to\infty}P(x)<\kappa<m$ and
define
$$
P^\kappa(x)=\min\{\kappa,P(x)\}.
$$
Consider the functional
$$
\widetilde{\Phi}_\vr
^\kappa(u)=\frac{1}{2}\bkt{\|u^+\|^2-\|u^-\|^2-\omega\jdz{u}_2^2}
-\mathscr{F}_\vr (u)-\int P^\kappa_\vr (x)G(\jdz{u})
$$
and as before define correspondingly $h_\vr ^\kappa:E^+\to E^-$,
$I_\vr ^\kappa:E^+\to\mathbb{R}$, $\mathscr{N}_\vr ^\kappa$, $c_\vr
^\kappa$ and so on. Following the proof of Lemma \ref{d to gamma}, one
finds
\begin{equation}\label{d-eps-sigma  to  gamma-sigma}
\lim_{\vr \to0}c_\vr ^\kappa=\gamma_\kappa.
\end{equation}

Assume by contradiction that there is a sequence $\vr _j\to0$ with
$w_{\vr _j}=0$. Then $w_n=u_n+h_{\vr _j}(u_n)\rightharpoonup0$ in
$E$, $u_n\to0$ in $L_{loc}^q$ for $q\in[1,3)$, and $w_n(x)\to0$ a.e.
in $x\in\mathbb{R}^3$. Let $t_n>0$ be such that
$t_nu_n\in\mathscr{N}_{\vr _j}^\kappa$. Since $u_n\in\mathscr{N}_\vr
$, it is not difficult to see that $\{t_n\}$ is bounded and one may
assume $t_n\to t_0$ as $n\to\infty$. By $(P_0)$, the set $A_\vr
:=\{x\in\mathbb{R}^3:P_\vr (x)>\kappa\}$ is bounded. Remark that
$h_{\vr _j}^\kappa(t_nu_n)\rightharpoonup0$ in $E$ and $h_{\vr
_j}^\kappa(t_nu_n)\to0$ in $L_{loc}^q$ for $q\in[1,3)$ as
$n\to\infty$ (see \cite{Ackermann}). Moreover, by virtue of Lemma \ref{ding2010},
$\widetilde{\Phi}_{\vr _j}(t_nu_n+h_{\vr _j}^\kappa(t_nu_n))\leq
I_{\vr _j}(u_n)$. We obtain
\[
\begin{aligned}
c_{\vr _j}^\kappa&\leq I_{\vr _j}^\kappa(t_nu_n)
=\widetilde{\Phi}_{\vr _j}^\kappa
(t_nu_n+h_{\vr _j}^\kappa(t_nu_n))\\
 &=\widetilde{\Phi}_{\vr _j}(t_nu_n+h_{\vr _j}^\kappa(t_nu_n))
 +\int \big(P_{\vr _j}(x)-P_{\vr _j}^\kappa(x)\big)
 G\big(|t_nu_n+h_{\vr _j}^\kappa(t_nu_n)|\big)\\
 &\leq I_{\vr _j}(u_n)+\int_{A_{\vr _j}}
 \big(P_{\vr _j}(x)-P_{\vr _j}^\kappa(x)\big)
 G\big(|t_nu_n+h_{\vr _j}^\kappa(t_nu_n)|\big)\\
 &=c_{\vr _j}+o(1)
\end{aligned}
\]
as $n\to\infty$. Hence $c_{\vr _j}^\kappa\leq c_{\vr _j}$. By
(\ref{d-eps-sigma  to  gamma-sigma}), letting $j\to\infty$ yields
$$
\gamma_\kappa\leq\gamma_m,
$$
which contradiction with $\gamma_m<\gamma_\kappa$.
\end{proof}

\medskip

\begin{proof}[Proof of Lemma \ref{least energy solution compact}]
Since $\mathscr{L}_\vr \subset B_\Lambda$ for all small $\vr >0$,
assume by contradiction that, for some $\vr _j\to0$,
$\mathscr{L}_{\vr _j}$ is not compact in $E$. Then we can choose
$u_n^j\in\mathscr{L}_{\vr _j}$ be such that $u_n^j\rightharpoonup0$ as
$n\to\infty$, as done for proving the Lemma \ref{d-eps is attained},
we gets a contradiction.

Now let $\{u_n\}\subset\mathscr{L}_\vr $ such that $u_n\to u$ in
$E$, and recall $H_0=i\alpha\cdot\nabla-a\beta$, by
$$
H_0u=\omega u+Q_\vr (x)A_{\vr ,u}^0u- \sum_{k=1}^3Q_\vr
(x)\alpha_kA_{\vr ,u}^ku+P_\vr (x)g(\jdz{u})u
$$
one has
\begin{equation}\label{H1 estimate}
\begin{split}
\jdz{H_0(u_n-u)}_2\leq&\ \omega\jdz{u_n-u}_2
 +\jdz{Q_\vr (x)\bkt{A_{\vr ,u_n}^0u_n-A_{\vr ,u}^0u}}_2\\
 &+\sum_{k=1}^3\jdz{Q_\vr (x)\alpha_k\big(A_{\vr ,u_n}^ku_n-A_{\vr ,u}^ku\big)}_2\\
 &+\jdz{P_\vr (x)\bkt{g(\jdz{u_n})u_n-g(\jdz{u})u}}_2
\end{split}
\end{equation}
A standard calculus shows that
\[
\begin{aligned}
\jdz{Q_\vr \cdot\alpha_k\big(A_{\vr ,u_n}^ku_n -A_{\vr
,u}^ku\big)}_2&\leq\jdz{Q}_\infty\jdz{u_n}_\infty^{1/6}
\jdz{A_{\vr ,u_n}^k-A_{\vr ,u}^k}_6\jdz{u_n}_{5/2}^{5/6}\\
 &+\jdz{Q}_\infty\jdz{u_n-u}_\infty^{1/6}\jdz{A_{\vr ,u}^k}_6\jdz{u_n-u}_{5/2}^{5/6}
\end{aligned}
\]
and
\[
\begin{aligned}
&\jdz{P_\vr \cdot\bkt{g(\jdz{u_n})u_n-g(\jdz{u})u}}_2\\
\leq&\jdz{P}_\infty\jdz{g(\jdz{u_n})-g(\jdz{u})}_\infty^{\frac{1}{2}}
\jdz{(g(\jdz{u_n})-g(\jdz{u}))^{1/2}u_n}_2\\
 &+\jdz{P}_\infty\jdz{g(\jdz{u})}_\infty\jdz{u_n-u}_2.
\end{aligned}
\]
By Lemma \ref{lemma of Aj} and the fact that $u_n\to u$ in
$L^q(\mathbb{R}^3,\mathbb{C}^4)$ for all $q\in[2,3]$, one gets
$\jdz{H_0(u_n-u)}_2\to0$, so $u_n\to u$ in
$H^1(\mathbb{R}^3,\mathbb{C}^4)$. With Lemma
\ref{critical points in W1,q}, $u_n\to u$ in
$W^{1,q}$ for all $q\in[2,\infty)$.
\end{proof}

\subsection*{Part 2. Concentration}
The proof relies on the following lemma. To prove it, it
suffices to show that for any sequence $\vr_j\to0$ the corresponding
sequence of solutions $u_j\in\mathscr{L}_{\vr_j}$ converges, up to a
shift of $x$-variable, to a least energy solution of the limit
problem \eqref{the limit problem}.

\begin{Lem}\label{concentration}
There is a maximum point $x_\vr $ of $\jdz{u_\vr }$ such that
$\mathrm{dist}(y_\vr ,\mathscr{P})\to0$ where $y_\vr =\vr  x_\vr $,
and for any such $x_\vr $, $v_\vr (x):=u_\vr (x+x_\vr )$ converges
to a least energy solution of (\ref{the limit problem}) in $W^{1,q}$
as $\vr \to0$ for all $q\geq2$.
\end{Lem}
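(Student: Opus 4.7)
The plan is, given any sequence $\vr_j\to 0$ with $u_j\in\mathscr{L}_{\vr_j}$, to shift so as to capture the mass, pass to a nontrivial weak limit $v$, identify the limit equation with effective coefficient equal to $m$, and upgrade to strong $W^{1,q}$ convergence; the concentration statement then follows by redefining $x_\vr$ to be a maximum point of $\jdz{u_\vr}$. First I would establish non-vanishing: by \eqref{C-infty estimate of L} and Lemma \ref{critical points in W1,q} the sequence $\{u_j\}$ is uniformly bounded in $E\cap W^{1,q}$, and from
$$
c_{\vr_j}=\Phi_{\vr_j}(u_j)-\tfrac12\Phi'_{\vr_j}(u_j)u_j=\Ga_{\vr_j}(u_j)+\int P_{\vr_j}(x)\widehat{G}(\jdz{u_j})\longrightarrow\ga_m>0,
$$
combined with $\Ga_{\vr_j}\geq 0$ and \eqref{R9}, a Lions-type concentration compactness argument applied to $\jdz{u_j}^\sigma$ produces $r,\de>0$ and $x_j\in\R^3$ with $\int_{B_r(x_j)}\jdz{u_j}^2\geq\de$. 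Setting $v_j(x):=u_j(x+x_j)$ and $y_j:=\vr_jx_j$, I extract $v_j\rightharpoonup v\neq 0$ in $E$.

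Next I would identify the limit equation. The shifted function $v_j$ satisfies a translate of \eqref{D22} with coefficients $P(\vr_j\cdot+y_j)$, $Q(\vr_j\cdot+y_j)$ and potentials $\widetilde A_j^k(x):=A^k_{\vr_j,u_j}(x+x_j)$, the latter still solving Poisson-type equations with right-hand side of order $\vr_j^2$. By \eqref{Ajsinequ}, $\|\widetilde A_j^k\|_{\mathcal{D}}=O(\vr_j^2)\to 0$, so the nonlocal terms disappear in the weak limit. Passing to a further subsequence so that $P(\vr_j\cdot+y_j)\to P_0$ a.e.\ in $L^\infty$, the weak limit $v$ is a nontrivial critical point of $\mathscr{T}_{P_0}$.

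The crux of the argument is locating $y_j$. If $|y_j|\to\infty$, hypothesis $(P_0)$ forces $P_0(x)\leq\limsup_{|z|\to\infty}P(z)<m$ a.e., so Lemma \ref{ding2008} gives $\mathscr{T}_{P_0}(v)\geq\ga_{P_0}$. On the other hand, Fatou's lemma together with the displayed identity (recalling $\Ga_{\vr_j}\geq 0$) yields $\int P_0\,\widehat G(\jdz v)\leq\ga_m$, which is exactly $\mathscr{T}_{P_0}(v)\leq\ga_m$ since $\mathscr{T}'_{P_0}(v)=0$. This contradicts $\ga_{P_0}>\ga_m$ from Lemma \ref{ding2010}(2). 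Hence $\{y_j\}$ is bounded and, along a subsequence, $y_j\to y_0$; replaying the same Fatou/monotonicity comparison with $P_0=P(y_0)$ forces $P(y_0)=m$, so $y_0\in\mathscr{P}$ and $v$ is a least energy solution of \eqref{the limit problem}.

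Finally I would upgrade to strong convergence imitating Lemma \ref{least energy solution compact}: testing the difference of the equations for $v_j$ and $v$ against $v_j-v$ and using Lemma \ref{lemma of Aj} together with the $O(\vr_j^2)$ decay of the nonlocal terms gives $v_j\to v$ in $H^1$; the bootstrap of Lemma \ref{critical points in W1,q} then upgrades this to $W^{1,q}$ convergence for every $q\geq 2$, and hence to $C^0$ convergence by Sobolev embedding. Choosing $x_\vr$ to be a maximum point of $\jdz{u_\vr}$, a standard uniform-convergence/decay argument shows that $x_\vr-x_j$ must remain bounded (otherwise $\jdz{u_\vr}_\infty\to 0$, which is ruled out by non-vanishing), so $y_\vr=\vr x_\vr\to y_0\in\mathscr{P}$ and $u_\vr(\cdot+x_\vr)$ converges in $W^{1,q}$ to a translate of $v$, still a least energy solution of \eqref{the limit problem}. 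The main obstacle is the possible drift $|y_j|\to\infty$; what closes the loop is the combination of the strict monotonicity in Lemma \ref{ding2010}(2) and the $\vr_j^2$ decay of the nonlocal Maxwell contribution.
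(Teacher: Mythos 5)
Your proposal follows essentially the same route as the paper: use concentration--compactness to find translates $v_j=u_j(\cdot+x_j)$ with a nontrivial weak limit $v$, use the $O(\vr_j^2)$ bound \eqref{Ajsinequ} to kill the Maxwell terms in the limit, invoke the strict monotonicity of $\mu\mapsto\gamma_\mu$ (Lemma \ref{ding2010}(2)) together with the Fatou/energy comparison to rule out $|\vr_j x_j|\to\infty$ and to force $P(y_0)=m$, and then bootstrap to $W^{1,q}$.

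The one place you are too quick is the upgrade from weak to strong $E$-convergence. Testing the difference of the equations for $v_j$ and $v$ against $v_j-v$ does not by itself close the estimate, because the nonlinear term $\hat P_j\, g(|v_j|)v_j-P(y_0)\,g(|v|)v$ paired with $z_j=v_j-v$ is a priori of the same order as $\|z_j\|^2$. One must first establish the strong convergence $|v_j-v|_\sigma\to0$; the paper gets this from the energy convergence $\int\hat P_j\,\widehat G(|v_j|)\to\int P(y_0)\,\widehat G(|v|)$ (which in turn follows from $c_{\vr_j}\to\gamma_m$, $\hat\Ga_j(v_j)\to0$ and Fatou applied twice) together with $(g_2)$, $\widehat G(s)\geq c_\theta s^\sigma$, and a Brezis--Lieb argument, and only then tests the two equations separately against $z_j^\pm$ to conclude $\|z_j^\pm\|\to0$. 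With that step inserted, your plan matches the paper's proof.
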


\begin{proof}
Let $\vr _j\to0$, $u_j\in\mathscr{L}_j$, where
$\mathscr{L}_j=\mathscr{L}_{\vr _j}$. Then $\{u_j\}$ is bounded. A
standard concentration argument (see \cite{Lions}) shows that there
exist a sequence $\{x_j\}\subset\mathbb{R}^3$ and constant $R>0$,
$\delta>0$ such that
$$\liminf_{j\to\infty}\int_{B(x_j,R)}\jdz{u_j}^2\geq\delta.$$
Set
$$v_j=u_j(x+x_j).$$
Then $v_j$ solves, denoting
$\hat{Q}_j(x)=Q(\vr _j(x+x_j))$,
$\hat{A}_{\vr ,u_j}^k(x)={A}_{\vr ,u_j}^k(x+x_j)$ and
$\hat{P}_j(x)=P(\vr _j(x+x_j))$,
\begin{equation}\label{vj equa}
H_0 v_j-\omega v_j-\hat{Q}_j\hat{A}_{\vr ,u_j}^0v_j
+\sum_{k=1}^3\hat{Q}_j\alpha_k\hat{A}_{\vr ,u_j}^kv_j=\hat{P}_j\cdot g(\jdz{v_j})v_j,
\end{equation}
with energy
\[
\begin{aligned}
S(v_j)&:=\frac{1}{2}\big(\|v_j^+\|^2-\|v_j^-\|^2-\omega
|v_j|_2^2\big)
-\hat{\Gamma}_j(v_j)-\int\hat{P}_j(x)G(\jdz{v_j})\\
 &=\widetilde{\Phi}_j(v_j)={\Phi}_j(v_j)=\hat{\Gamma}_j(v_j)
 +\int\hat{P}_j(x)\widehat{G}(\jdz{v_j})\\
 &=c_{\vr _j}.
\end{aligned}
\]
Additionally, $v_j\rightharpoonup v$ in $E$ and $v_j\to v$ in
$L_{loc}^q$ for $q\in[1,3)$.

We now turn to prove that $\{\vr _jx_j\}$ is bounded. Arguing
indirectly we assume $\vr _j\jdz{x_j}\to\infty$ and get a
contradiction.

Without loss of generality assume $P(\vr _jx_j)\to P_\infty$.
Clearly, $m>P_\infty$ by $(P_0)$. Since for any $\psi\in C_c^\infty$
\[
\begin{aligned}
0&=\lim_{j\to\infty}\int\bigg(H_0v_j-\omega
v_j-\hat{Q}_j\hat{A}_{\vr ,u_j}^0v_j
+\sum_{k=1}^3\hat{Q}_j\alpha_k\hat{A}_{\vr ,u_j}^kv_j
-\hat{P}_jg(\jdz{v_j})v_j\bigg)\bar{\psi}\\
 &=\lim_{j\to\infty}\int \bkt{H_0v-\omega v-{P}_\infty g(\jdz{v})v}\bar{\psi},
\end{aligned}
\]
hence $v$ solves
$$
i\alpha\cdot\nabla v-a\beta v-\omega v={P}_\infty g(\jdz{v})v.
$$
Therefore,
$$
S_\infty(v):=\frac{1}{2}\bkt{\|v^+\|^2-\|v^-\|^2-\omega |v|_2^2}
-\int{P}_\infty G(\jdz{v})\geq\ga_{P_\infty}.
$$
It follows from $m>P_\infty$, by
Lemma \ref{ding2010}, one has $\ga_m<\ga_{P_\infty}$.
Moreover, by Fatou's lemma,
\[
\begin{aligned}
&\lim_{j\to\infty}\int\hat{P}_j(x)\widehat G (\jdz{v_j})
\geq \int{P}_\infty\widehat G (\jdz{v})
=S_\infty(v).
\end{aligned}
\]
Consequently, noting that $\hat{\Gamma}_j(v_j)=o(1)$ as $j\to\infty$,
$$
\gamma_m<\gamma_{P_\infty}\leq S_\infty(v)\leq\lim_{j\to\infty}c_{\vr _j}=\gamma_m,
$$
a contradiction.

Thus $\{\vr _jx_j\}$ is bounded. Hence, we can assume
$y_j=\vr _jx_j\to y_0$. Then $v$ solves
\begin{equation}\label{v solves equa}
i\alpha\cdot\nabla v-a\beta v-\omega v={P}(y_0)g(\jdz{v})v.
\end{equation}
Since $P(y_0)\leq m$, we obtain
$$
S_0(v):=\frac{1}{2}\bkt{\|v^+\|^2-\|v^-\|^2-\omega
|v|_2^2}-\int{P}(y_0)G(\jdz{v}) \geq\gamma_{P(y_0)}\geq\gamma_m.
$$
Again, by Fatou's lemma, we have
$$
S_0(v)=\int{P}(y_0)\widehat G (\jdz{v})
\leq\lim_{j\to\infty}c_{\vr _j}=\gamma_m.
$$
Therefore, $\gamma_{P(y_0)}=\gamma_m$, which implies
$y_0\in\mathscr{P}$ by Lemma \ref{ding2010}. By virtue of Lemma
\ref{critical points in W1,q} and (\ref{C-infty estimate of L}) it
is clear that one may assume that $x_j\in\mathbb{R}^3$ is a maximum
point of $\jdz{u_j}$. Moreover, from the above argument we readily
see that any sequence of such points satisfies
$y_j=\vr _jx_j$, converging to some point in $\mathscr{P}$ as
$j\to\infty$.

In order to prove $v_j\to v$ in $E$, recall that as the argument shows
$$\lim_{j\to\infty}\int\hat{P}_j(x)\widehat G (\jdz{v_j})
=\int{P}(y_0)\widehat G (\jdz{v}).$$
By $(g_2)$ and the exponential decay of $v$, using the Brezis-Lieb lemma,
one obtains $\jdz{v_j-v}_\sigma\to0$, then
$|v_j^\pm-v^\pm|_\sigma\to0$ by (\ref{lpdec}). Denote $z_j=v_j-v$.
Remark that $\{z_j\}$ is bounded in $E$ and $z_j\to0$ in
$L^\sigma$, therefore $z_j\to0$ in $L^q$ for all $q\in(2,3)$. The scalar
product of (\ref{vj equa}) with $z_j^+$ yields
$$
\big\langle v_j^+, z_j^+\big\rangle=o(1).
$$
Similarly, using the exponential decay of $v$ together with the fact
that $z_j^\pm\to0$ in $L_{loc}^q$ for $q\in[1,3)$, it follows from
(\ref{v solves equa}) that
$$
\big\langle v^+, z_j^+\big\rangle=o(1).
$$
Thus
$$
\|z_j^+\|=o(1),
$$
and the same arguments show
$$
\|z_j^-\|=o(1),
$$
we then get $v_j\to v$ in $E$, and the arguments in Lemma
\ref{least energy solution compact} shows that $v_j\to v$ in
$W^{1,q}$ for all $q\geq2$.
\end{proof}

\subsection*{Part 3. Exponential decay}
See the following Lemma \ref{exp decay}.
%The first is to show a
%priori decay for the sequence of the least energy solutions obtained
%in Lemma \ref{d-eps is attained}. This we shall do with the help of
%the sub-solution estimate to show that $\{v_\vr\}$ is uniformly
%small at the infinity.
For the later use denote $D=i\alpha\cdot\nabla$ and, for
$u\in\mathscr{L}_\vr $,  write (\ref{D22}) as
$$
Du=a\beta u+\omega u+Q_\vr (x)A_{\vr ,u}^0u- \sum_{k=1}^3Q_\vr
(x)\alpha_kA_{\vr ,u}^ku+P_\vr (x)g(\jdz{u})u.
$$
Applying the operator $D$ on both sides and noting that
$D^2=-\Delta$, we get
\begin{equation}\label{elliptic equ}
\begin{split}
\Delta u=&\ a^2u-\bkt{\omega+Q_\vr (x)A_{\vr ,u}^0(x)+P_\vr (x)g(\jdz{u})}^2u\\
 &-D\bkt{P_\vr (x)g(\jdz{u})}u-D\bkt{Q_\vr  A_{\vr ,u}^0}u\\
 &+\sum_{k=1}^3\bkt{Q_\vr  A_{\vr ,u}^k}^2u+\sum_{k=1}^3D\bkt{Q_\vr  A_{\vr ,u}^k}\alpha_ku\\
 &+2i\sum_{k=1}^3Q_\vr  A_{\vr ,u}^k\partial_ku.
\end{split}
\end{equation}
With the fact that
\begin{equation}\label{identity}
\Delta\jdz{u}^2=\bar{u}\Delta u+u\Delta\bar{u}+2\jdz{\nabla u}^2
\end{equation}
and $\overline{\alpha_ku}\cdot u=\alpha_ku\cdot\bar{u}$ one deduces
\[
\begin{aligned}
\Delta\jdz{u}^2=&\ 2a^2\jdz{u}^2-2\bkt{\omega+Q_\vr (x)A_{\vr ,u}^0(x)
+P_\vr (x)g(\jdz{u})}^2\jdz{u}^2     \\
 &+2\sum_{k=1}^3\bkt{Q_\vr  A_{\vr ,u}^k}^2\jdz{u}^2+2i\sum_{k=1}^3
 \sum_{1\leq j\leq3 \atop j\not
 =k}\partial_j\bkt{Q_\vr  A_{\vr ,u}^k}(\al_j\al_ku)\cdot\bar{u}\\
 &+4\Im{\sum_{k=1}^3Q_\vr  A_{\vr ,u}^k\partial_ku\cdot\bar{u}}+2\jdz{\nabla u}^2.
\end{aligned}
\]
In addition, setting
$$
f^0_\vr (x):=\max\hbt{\jdz{Q_\vr (x)A_{\vr ,u}^k(x)}:k=0,1,2,3},
$$
$$
f^1_\vr (x):=\max\hbt{\jdz{\nabla\big(Q_\vr (x)A_{\vr
,u}^k(x)\big)}:k=0,1,2,3},
$$
one has
\[
\bigg|2i\sum_{k=1}^3\sum_{1\leq j\leq3 \atop
j\not=k}\partial_j\bkt{Q_\vr A_{\vr
,u}^k}(\al_j\al_ku)\cdot\bar{u}\bigg|\leq c_1f_\vr
^1(x)\jdz{u}^2
\]
and
\[
\bigg|4\Im{\sum_{k=1}^3Q_\vr  A_{\vr
,u}^k\partial_ku\cdot\bar{u}}\bigg|\leq c_2f_\vr
^0(x)\bkt{\jdz{\nabla u}^2+\jdz{u}^2}.
\]
Hence
\[
\begin{aligned}
\Delta\jdz{u}^2\geq&\bigg(2a^2-2\bkt{\omega+Q_\vr  A_{\vr ,u}^0+P_\vr
(x)g(\jdz{u})}^2+
2\sum_{k=1}^3\bkt{Q_\vr  A_{\vr ,u}^k}^2\bigg)\jdz{u}^2\\
 &-c_1f_\vr ^1(x)\jdz{u}^2-c_2f_\vr ^0(x)
 \bkt{\jdz{\nabla u}^2+\jdz{u}^2}+2\jdz{\nabla u}^2.
\end{aligned}
\]
Observe that for $\vr >0$ sufficiently small, by (\ref{Ak local estimates}), we
get
$$
c_2\jdz{f^0_\vr }<2,
$$
hence
\begin{equation}\label{Delta u2 inequ}
\begin{split}
\Delta\jdz{u}^2\geq&2\bigg(a^2-\bkt{\omega+Q_\vr  A_{\vr ,u}^0+P_\vr
\cdot g(\jdz{u})}^2
+\sum_{k=1}^3\bkt{Q_\vr  A_{\vr ,u}^k}^2\bigg)\jdz{u}^2\\
 &-c_1f_\vr ^1(x)\jdz{u}^2-c_2f_\vr ^0(x){\jdz{u}^2}.
\end{split}
\end{equation}
This together with the regularity results for $u$
implies there is $M>0$ satisfying
$$
\Delta\jdz{u}^2\geq -M\jdz{u}^2.
$$
By the sub-solution estimate \cite{Trudinger,Simon}, one has
\begin{equation}\label{jifen guji}
\jdz{u(x)}\leq C_0\bigg(\int_{B_1(x)}\jdz{u(y)}^2dy\bigg)^{1/2}
\end{equation}
with $C_0$ independent of $x$ and $u\in\mathscr{L}_\vr $, $\vr >0$
small.

\begin{Lem}\label{v-eps uniformly to 0}
Let $v_\vr $ and $\hat{A}_{\vr ,u_\vr }^k$ for $k=0,1,2,3$ be given in the
proof of Lemma \ref{concentration}. Then $\jdz{v_\vr (x)}\to0$ and
$\jdz{\hat{A}_{\vr ,u_\vr }^k(x)}\to0$ as $\jdz{x}\to\infty$ uniformly in
$\vr >0$ small.
\end{Lem}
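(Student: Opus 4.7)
The plan is to prove the two decay statements in order: first $|v_\vr(x)|\to 0$ as $|x|\to\infty$ uniformly in $\vr$ small, then deduce the uniform decay of $|\hat A^k_{\vr,u_\vr}|$ from the Newtonian convolution representation \eqref{juan ji}.

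Since $v_\vr(x)=u_\vr(x+x_\vr)$ with $u_\vr\in\mathscr{L}_\vr$, a change of variables in \eqref{jifen guji} gives
\[
|v_\vr(x)|^2\leq C_0^2\int_{B_1(x)}|v_\vr(y)|^2\,dy.
\]
By Lemma \ref{concentration} we have $v_\vr\to v$ in $L^2$, and by Lemma \ref{ding2008} iii) the limit $v$ decays exponentially. Given $\de>0$, pick $R$ with $\int_{|y|>R}|v|^2<\de$ and then $\vr_0$ so that $|v_\vr-v|_2^2<\de$ whenever $\vr<\vr_0$. For $|x|>R+1$ the ball $B_1(x)$ is contained in $\{|y|>R\}$, hence
\[
\int_{B_1(x)}|v_\vr|^2\leq 2|v_\vr-v|_2^2+2\int_{|y|>R}|v|^2<4\de,
\]
which gives $|v_\vr(x)|^2\leq 4C_0^2\de$ uniformly in $\vr<\vr_0$.

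For $\hat A^k_{\vr,u_\vr}$, translating \eqref{juan ji} by $x_\vr$ in the integration variable and bounding $|(\alpha_k v_\vr)\ov{v_\vr}|\leq|v_\vr|^2$ yields
\[
|\hat A^k_{\vr,u_\vr}(x)|\leq \vr^2|Q|_\infty\int\frac{|v_\vr(z)|^2}{|x-z|}\,dz.
\]
I split the integral as $\int_{B_1(x)}+\int_{|x-z|\geq 1,\,|z|\leq R'}+\int_{|x-z|\geq 1,\,|z|>R'}$. On $B_1(x)$ one has $\int|x-z|^{-1}dz=2\pi$, and the step-one decay gives $\sup_{B_1(x)}|v_\vr|^2\to 0$ uniformly in $\vr$ for $|x|$ large. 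The second piece is bounded by $|v_\vr|_2^2/(|x|-R')$ for $|x|>R'$. The third piece is bounded by $\int_{|z|>R'}|v_\vr|^2$, which the same $L^2$-tail argument as above makes small uniformly in $\vr$. Choosing first $R'$ and then $|x|$ large enough makes all three contributions uniformly small.

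The main obstacle is obtaining \emph{uniform} tail control in $\vr$; this is precisely where Lemma \ref{concentration}, which upgrades the weak limit to a strong $W^{1,q}$-convergence towards a single exponentially decaying limit $v$, is essential. Without such strong convergence one could estimate the tail of each individual $v_\vr$ but not of the whole family simultaneously, and the convolution argument for $\hat A^k_{\vr,u_\vr}$ would fail since both the pointwise decay of the integrand and the global $L^2$ tail bound rely on the same comparison with $v$.
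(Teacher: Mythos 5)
Your argument for the decay of $|v_\vr|$ uses the same two ingredients as the paper: the subsolution estimate \eqref{jifen guji} and strong $L^2$-convergence of $v_\vr$ toward a decaying profile. The paper runs a contradiction argument (pick $\vr_j\to 0$, $|x_j|\to\infty$ with $|v_j(x_j)|\geq\delta$, pass to a convergent subsequence, split $\int_{B_1(x_j)}|v_j|^2\leq 2|v_j-v|_2^2 + 2\int_{B_1(x_j)}|v|^2\to 0$), whereas you give a direct $\delta$--$R$--$\vr_0$ argument. These are genuinely the same proof, but the contradiction format in the paper is not merely cosmetic: Lemma \ref{concentration} as actually proved only gives \emph{subsequential} convergence, with a limit $v$ that may depend on the subsequence (it lies somewhere in $\mathscr{R}_{P(y_0)}$ for some $y_0\in\mathscr{P}$). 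Your direct argument fixes a single $v$ and picks $\vr_0$ so that $|v_\vr-v|_2^2<\delta$ for all $\vr<\vr_0$, which is stronger than what has been established. This is a small gap, and it is easy to close: either run the paper's contradiction argument, or replace ``tail of $v$'' by ``uniform tail of $\mathscr{R}_m$'' (which is legitimate since $\mathscr{R}_m$ is compact in $H^1$ and decays exponentially with uniform constants by Lemma \ref{ding2008}) and then prove $\operatorname{dist}_{L^2}(v_\vr,\mathscr{R}_m)\to 0$ by a subsequence argument. As written your proof implicitly assumes a uniqueness of the limit that the quoted lemma does not quite deliver.

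For $\hat A^k_{\vr,u_\vr}$ you take a route different from the paper's. The paper invokes the local elliptic estimate \eqref{Ak local estimates}, i.e.\ $\|A^k\|_{C^1(B_1(x))}\lesssim \vr^2|u|^2_{L^{12}(B_2(x))} + |A^k|_{L^6(B_2(x))}$, and lets both pieces tend to zero as $|x|\to\infty$ once the pointwise decay of $v_\vr$ is known. You instead work directly from the Newtonian potential representation \eqref{juan ji}, splitting the convolution into a near piece (controlled by $\sup_{B_1(x)}|v_\vr|^2$), a far-but-compact piece (controlled by $|v_\vr|_2^2/(|x|-R')$), and a far tail piece (controlled by $\int_{|z|>R'}|v_\vr|^2$). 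The splitting and the bounds are all correct, and this is a perfectly good alternative; indeed it makes the dependence on the uniform $L^2$-tail control completely explicit, while the paper's route bundles that information inside the elliptic theory. The same caveat about subsequential versus global convergence applies to your third piece.
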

\begin{proof}
Arguing indirectly, if the conclusion of the lemma is not held, then
by (\ref{jifen guji}), there exist $\delta>0$ and
$x_j\in\mathbb{R}^3$ with $\jdz{x_j}\to\infty$ such that
$$
\delta\leq\jdz{v_j(x_j)}\leq
C_0\Big(\int_{B_1(x_j)}\jdz{v_j}^2\Big)^{1/2},
$$
where $\vr _j\to0$ and $v_j=v_{\vr _j}$. Since $v_j\to v$ in $E$, we
obtain
\[
\begin{aligned}
\delta&\leq C_0\Big(\int_{B_1(x_j)}\jdz{v_j}^2\Big)^{1/2}\\
%\leq C_0\int_{B_1(x_j)}\jdz{v_j-v}+C_0\int_{B_1(x_j)}\jdz{v}\\
 &\leq C_0\Big(\int \jdz{v_j-v}^2\Big)^{1/2}+C_0\Big(\int_{B_1(x_j)}\jdz{v}^2\Big)^{1/2}\to0,
\end{aligned}
\]
a contradiction. Now, jointly with (\ref{Ak local estimates}), one
sees also $\jdz{\hat{A}_{\vr ,u_\vr }^k(x)}\to0$ as $\jdz{x}\to\infty$
uniformly in $\vr >0$ small.
\end{proof}

\begin{Lem}\label{exp decay}
There exist $C>0$ such that for all $\vr >0$ small
$$\jdz{u_\vr (x)}\leq Ce^{-\frac{c_0}{2}\jdz{x-x_\vr }}$$
where $c_0={\sqrt{\bkt{a^2-\omega^2}}}$.
\end{Lem}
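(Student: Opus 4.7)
The plan is to exploit the pointwise differential inequality \eqref{Delta u2 inequ} for $\Delta|u_\vr|^2$ already derived in the text, together with the uniform decay from Lemma \ref{v-eps uniformly to 0}, to set up a comparison argument against an explicit exponential supersolution. Throughout I work in the shifted variable $y=x-x_\vr$, writing $\tilde u_\vr(y):=u_\vr(y+x_\vr)$, which (up to the subsequence extracted in the proof of Lemma \ref{concentration}) coincides with $v_\vr$, and recall from \eqref{C-infty estimate of L} that $|u_\vr|_\infty\leq C_\infty$ uniformly in $\vr$.

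First I would fix $\eta>0$ small (to be chosen later depending only on $c_0$) and use Lemma \ref{v-eps uniformly to 0} together with the local estimate \eqref{Ak local estimates} to produce an $R>0$, independent of $\vr$ for all sufficiently small $\vr$, such that for $|y|\geq R$ one has
\[
|\hat Q_\vr \hat A_{\vr,u_\vr}^k(y)| + \hat P_\vr(y)g(|\tilde u_\vr(y)|) + f_\vr^0(y) + f_\vr^1(y) \leq \eta.
\]
Plugging this into \eqref{Delta u2 inequ} and using $a^2-\omega^2=c_0^2$ gives, for $|y|\geq R$ and $\vr$ small,
\[
\Delta|\tilde u_\vr|^2 \geq \bigl(2c_0^2 - K\eta\bigr)|\tilde u_\vr|^2 \geq c_0^2\,|\tilde u_\vr|^2,
\]
where $K>0$ depends only on the fixed constants $a,\omega,|P|_\infty,|Q|_\infty,c_1,c_2$, after choosing $\eta$ so that $K\eta\leq c_0^2$.

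Next I would compare $|\tilde u_\vr|^2$ with the radial supersolution $\phi(y):=Me^{-c_0|y|}$. A direct computation shows
\[
\Delta\phi(y) = \Bigl(c_0^2-\frac{2c_0}{|y|}\Bigr)\phi(y) \leq c_0^2\phi(y) \quad\text{for } |y|>0,
\]
so setting $\xi_\vr:=|\tilde u_\vr|^2-\phi$ on $\{|y|\geq R\}$ yields $\Delta\xi_\vr \geq c_0^2\,\xi_\vr+\tfrac{2Mc_0}{|y|}\phi \geq c_0^2\,\xi_\vr$ in that region. I choose $M$ large enough (uniformly in $\vr$, possible since $|u_\vr|_\infty\leq C_\infty$ and $\phi\geq e^{-c_0 R}$ on $\partial B_R$) so that $\xi_\vr\leq 0$ on $\partial B_R$. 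Since $\xi_\vr(y)\to 0$ as $|y|\to\infty$ by Lemma \ref{v-eps uniformly to 0}, if $\xi_\vr$ were positive somewhere in $\{|y|>R\}$ it would attain a positive interior maximum $y_0$, at which $\Delta\xi_\vr(y_0)\leq 0$ while $c_0^2\,\xi_\vr(y_0)>0$, a contradiction. Hence $\xi_\vr\leq 0$ throughout $\{|y|\geq R\}$, giving $|\tilde u_\vr(y)|\leq \sqrt M\, e^{-c_0|y|/2}$ there. On the compact set $\{|y|\leq R\}$ the bound is trivial by enlarging the constant, and translating back to the variable $x$ yields the claimed estimate $|u_\vr(x)|\leq Ce^{-c_0|x-x_\vr|/2}$.

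The main obstacle is the uniformity in $\vr$: I must verify that $R$ and $M$ can be chosen independent of $\vr$ for all small $\vr$. This relies crucially on the uniform $L^\infty$-bound \eqref{C-infty estimate of L}, on the uniform smallness of the coupling terms $Q_\vr \hat A_{\vr,u_\vr}^k$ (which uses both Lemma \ref{v-eps uniformly to 0} and the local estimate \eqref{Ak local estimates}, itself resting on the uniform $W^{1,q}$-bound \eqref{Cq estimate of L}), and on the uniform decay of $\hat P_\vr g(|\tilde u_\vr|)$ which follows from $g(0)=0$ together with the uniform pointwise decay of $|\tilde u_\vr|$ at infinity.
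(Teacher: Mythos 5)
Your proof is correct and takes essentially the same approach as the paper: both start from the differential inequality \eqref{Delta u2 inequ} combined with the uniform decay in Lemma \ref{v-eps uniformly to 0} to obtain $\Delta|v_\vr|^2\geq(a^2-\omega^2)|v_\vr|^2$ outside a fixed ball, and then run a comparison/maximum-principle argument against an exponentially decaying supersolution. The only cosmetic difference is the comparator: the paper uses a multiple of the fundamental solution of $-\Delta+(a^2-\omega^2)$ and cites its exponential decay, while you use the explicit function $Me^{-c_0|y|}$ and verify $\Delta\phi\leq c_0^2\phi$ directly, which is slightly more self-contained but yields the identical bound.
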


\begin{proof}
The conclusions of Lemma \ref{v-eps uniformly to 0} with (\ref{Delta
u2 inequ}) allow us to take $R>0$ sufficiently large such that
\[
\Delta\jdz{v_\vr }^2\geq\bkt{a^2-\omega^2}\jdz{v_\vr }^2
\]
for all $\jdz{x}\geq R$ and $\vr >0$ small. Let
$\Gamma(y)=\Gamma(y,0)$ be a fundamental solution to
$-\Delta+\bkt{a^2-\omega^2}$. Using the uniform boundedness, we may
choose that $\jdz{v_\vr (y)}^2\leq\bkt{a^2-\omega^2}\Gamma(y)$ holds
on $\jdz{y}=R$ for all $\vr >0$ small. Let $z_\vr =\jdz{v_\vr
}^2-\bkt{a^2-\omega^2}\Gamma$. Then
\[
\begin{aligned}
\Delta z_\vr &=\Delta\jdz{v_\vr }^2-\bkt{a^2-\omega^2}\Delta\Gamma\\
 &\geq\bkt{a^2-\omega^2}\bkt{\jdz{v_\vr }^2-\bkt{a^2-\omega^2}\Gamma}
 =\bkt{a^2-\omega^2}z_\vr .
\end{aligned}
\]
By the maximum principle we can conclude that $z_\vr (y)\leq0$ on
$\jdz{y}\geq R$. It is well known that there is $C'>0$ such that
$\Gamma(y)\leq C'\exp(-c_0\jdz{y})$ on $\jdz{y}\geq1$, we see that
$$
\jdz{v_\vr (y)}^2\leq C''e^{-c_0\jdz{y}}
$$
for all $y\in\mathbb{R}^3$ and all $\vr >0$ small, that is
$$
\jdz{u_\vr (x)}\leq Ce^{-\frac{c_0}{2}\jdz{x-x_\vr }}
$$
as claimed.
\end{proof}

Now, with the above arguments, we are ready to prove Theorem
\ref{main theorem}.

\begin{proof}[Proof of Theorem \ref{main theorem}]
Going back to system (\ref{D2}) with the variable substitution:
$x\mapsto x/\vr $, Lemma \ref{d-eps is attained} jointly with Lemma
\ref{critical points in W1,q}, shows that, for all $\vr >0$ small,
Eq.(\ref{D2}) has at least one least energy solution $w_\vr \in
W^{1,q}$ for all $q\geq2$. In addition, if $P,\ Q\in
C^{1,1}(\mathbb{R}^3)$, with (\ref{elliptic equ}) and the elliptic
regularity (see \cite{Trudinger}) one obtains a classical
solution, that is, the conclusion (i) of Theorem \ref{main theorem}.
And Lemma \ref{least energy solution compact} is nothing but the
conclusion (ii). Finally, the conclusion (iii) and (iv) follow from
Lemma \ref{concentration} and Lemma \ref{exp decay} respectively.
\end{proof}

\medskip

\noindent {\it Acknowledgment.}

Special thanks to the reviewer for his/her good comments and suggestions. The comments were
all valuable and helpful for revising and improving our paper,
as well as to the important guiding
significance of our research.

The work was supported by the
National Science Foundation of China (NSFC10831005, 10721061,
11171286).

\end{document}